   \renewcommand\@biblabel[1]{#1.}
    \numberwithin{equation}{section}
\numberwithin{equation}{section}
\DeclareSymbolFont{usualmathcal}{OMS}{cmsy}{m}{n}
\DeclareSymbolFontAlphabet{\mathcal}{usualmathcal}
\DeclareMathAlphabet\BCal{OMS}{cmsy}{b}{n}
\definecolor{cornellred}{rgb}{0.7, 0.11, 0.11}
\definecolor{britishracinggreen}{rgb}{0.0, 0.26, 0.15}
\definecolor{cobalt}{rgb}{0.0, 0.28, 0.67}
\theoremstyle{definition}
\newtheorem*{lemma*}{Lemma}
\newtheorem*{theorem*}{Theorem}
\newtheorem*{example*}{Example}
\newtheorem*{fact*}{Fact}
\newtheorem*{notation*}{Notation}
\newtheorem*{definition*}{Definition}
\newtheorem*{prop*}{Proposition}
\newtheorem*{remark*}{Remark}
\newtheorem*{corollary*}{Corollary}
\newtheorem*{conventions*}{Conventions}
\newtheorem{definition}{Definition}[section]
\newtheorem{example}[definition]{Example}
\newtheorem{remark}[definition]{Remark}
\newtheorem*{claim*}{Claim}
\newtheoremstyle{thm} 
        {3mm}
        {3mm}
        {\slshape}
        {0mm}
        {\bfseries}
        {.}
        {1mm}
        {}
\theoremstyle{thm}
\newtheorem{theorem}[definition]{Theorem}
\newtheorem{corollary}[definition]{Corollary}
\newtheorem{lemma}[definition]{Lemma}
\newtheorem{proposition}[definition]{Proposition}
\newtheorem{thm}{Theorem}
\newcommand{\claudio}[1]{\todo[inline, size=\tiny, author=Claudio, backgroundcolor=red!30]{#1}}
\newcommand{\francesco}[1]{\todo[inline, size=\tiny, author=Francesco, backgroundcolor=green!30]{#1}}
\newcommand{\C}{\mathbb{C}}
\newcommand{\Z}{\mathbb{Z}}
\newcommand{\cC}{\mathcal{C}}
\newcommand{\cE}{\mathcal{E}}
\newcommand{\cF}{\mathcal{F}}
\newcommand{\cH}{\mathcal{H}}
\newcommand{\cO}{\mathcal{O}}
\newcommand{\cX}{\mathcal{X}}
\newcommand{\KK}{\mathbb{k}}
\newcommand{\mU}{\mathcal{U}}
\newcommand{\debar}{\bar{\partial}}
\newcommand{\Art}{\mathsf{Art}}
\newcommand{\Sets}{\mathsf{Set}}
\DeclareMathOperator{\Db}{{D^{\mathrm{b}}}}
\DeclareMathOperator{\Aut}{Aut}
\DeclareMathOperator{\Br}{Br}
\DeclareMathOperator{\oH}{H}
\DeclareMathOperator{\Spec}{Spec}
\DeclareMathOperator{\Ext}{Ext}
\DeclareMathOperator{\tr}{tr}
\DeclareMathOperator{\End}{End}
\DeclareMathOperator{\Hom}{Hom}
\DeclareMathOperator{\RHom}{RHom}
\DeclareMathOperator{\Def}{Def}
\DeclareMathOperator{\SU}{SU}
\newcommand*{\defeq}{\mathrel{\vcenter{\baselineskip0.5ex \lineskiplimit0pt
\hbox{\scriptsize.}\hbox{\scriptsize.}}}%
=}
\DeclareMathOperator{\id}{id}
\author{Francesco Meazzini}
\address{\newline
Universit\`a di Roma Sapienza\hfill\newline
Dipartimento di Matematica G. Castelnuovo\hfill\newline
Piazzale Aldo Moro 5, 00185 Roma, Italia}
\email[]{francesco.meazzini@uniroma1.it}
\author{Claudio Onorati}
\address{\newline
Alma Mater studiorum Universit\`a di Bologna\hfill\newline
Dipartimento di Matematica\hfill\newline
Piazza di porta San Donato 5, 40126 Bologna, Italia}
\email[]{claudio.onorati@unibo.it}
\title[Deformations of twisted sheaves and formality problems]{Deformations of twisted sheaves and formality results}
\begin{document}

\begin{abstract}
We show that infinitesimal deformations of twisted sheaves are controlled by the DG Lie algebra of their derived automorphisms. We prove that such DG Lie algebra is formal for polystable twisted sheaves on minimal surfaces of Kodaira dimension $0$ and for projectively hyper-holomorphic locally free twisted sheaves on hyper-K\"ahler manifolds.
\end{abstract}

\maketitle

\tableofcontents

\thispagestyle{empty}


\section*{Introduction}

The first purpose of this paper is to give a rigorous proof of the following folklore result.

\begin{thm}[Theorem~\ref{thm:main}]
    Let $X$ be a smooth projective variety over a field $\KK$ of characteristic $0$, or a projective complex manifold, and let $\alpha\in\Br(X)$ a Brauer class. The infinitesimal deformations of a coherent $\alpha$-twisted sheaf $F$ on $X$ are controlled by the quasi-isomorphism class of DG Lie algebras $\RHom_{(X,\alpha)}(F,F)$.
\end{thm}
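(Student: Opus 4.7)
The plan is to exhibit an explicit DG Lie algebra model for $\RHom_{(X,\alpha)}(F,F)$ and verify that its Maurer--Cartan functor is naturally isomorphic to the deformation functor of $F$. First I would fix a Čech presentation of $\alpha$ by a $2$-cocycle $\{\alpha_{ijk}\}$ on a sufficiently fine cover $\mathcal{U} = \{U_i\}$ of $X$ (equivalently, a $\G_m$-gerbe), so that a coherent $\alpha$-twisted sheaf becomes a cocycle-compatible collection $\{F_i\}$ of coherent sheaves on the patches. The functor $\Def_F \colon \Art \to \Sets$ then assigns to $A \in \Art$ the set of isomorphism classes of $A$-flat $\alpha$-twisted sheaves on $X \times \Spec A$ whose reduction mod $\mfm_A$ is $F$.

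Next I would construct an explicit DG Lie algebra $L$. Choose a resolution $E^\bullet \to F$ by locally free $\alpha$-twisted sheaves, available in both settings (in the algebraic case this rests on the torsion nature of $\alpha$ on a smooth variety and the existence of twisted locally free resolutions of coherent twisted sheaves; in the complex case one argues locally and patches). Since the $\mathcal{H}om$ of two $\alpha$-twisted sheaves is an honest sheaf on $X$, the complex $\mathcal{E}nd^\bullet(E^\bullet)$ is a genuine sheaf of DG algebras. To pass to derived global sections I would tensor with a resolution by soft or otherwise acyclic objects — Dolbeault in the complex analytic setting, a Čech--Thom--Whitney model in the algebraic setting — and set
\[
L \;:=\; \Gamma\bigl(X,\; \mathcal{E}nd^\bullet(E^\bullet) \otimes \mathcal{R}^\bullet\bigr).
\]
By construction $L$ is a DG Lie algebra quasi-isomorphic to $\RHom_{(X,\alpha)}(F,F)$, and standard homotopy transfer arguments show that its quasi-isomorphism class is independent of the choices made.

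The heart of the proof is then the construction and verification of a natural isomorphism $\Def_L \simeq \Def_F$. Given a Maurer--Cartan element $\xi \in L^1 \otimes \mfm_A$, the perturbed complex $(E^\bullet \otimes_\KK A,\, d_E + \xi)$ is a complex of $A$-flat $\alpha$-twisted sheaves on $X \times \Spec A$, acyclic outside degree zero, whose $0$-th cohomology produces a deformation $\tilde F$ of $F$. Checking that gauge equivalence produces isomorphic deformations gives a well-defined natural transformation $\Def_L \to \Def_F$; one then verifies it is an isomorphism either by inductively lifting deformations along small extensions $A' \twoheadrightarrow A$ and extracting Maurer--Cartan data at each stage, or more abstractly by identifying tangent and obstruction spaces on both sides with the appropriate $\Ext$-groups of $F$ in the category of $\alpha$-twisted sheaves.

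The main obstacle is the existence and functoriality of sufficiently well-behaved resolutions of $F$ in the twisted setting, together with the verification that Maurer--Cartan perturbations preserve $A$-flatness and the twisted cocycle structure in families. A secondary subtlety is promoting the classical derived-category statement — that $\RHom_{(X,\alpha)}(F,F)$ is well-defined up to quasi-isomorphism — to the statement that $L$ is well-defined up to quasi-isomorphism \emph{of DG Lie algebras}, which is essential for the intrinsic formulation in the conclusion of the theorem.
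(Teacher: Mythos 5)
Your overall architecture coincides with the paper's: choose a twisted locally free resolution $E^\bullet\to F$, observe that $\mathcal{E}nd^\bullet(E^\bullet)$ is an honest (untwisted) sheaf of DG Lie algebras, build a global DG Lie model by an acyclic resolution, and match Maurer--Cartan data with deformation data. The paper implements the last two steps with the \v{C}ech semicosimplicial DG Lie algebra $\mathfrak{g}^\Delta$ of $\mathcal{E}nd^\bullet(E^\bullet)$, its Thom--Whitney totalization, and the functor $\oH^1_{\operatorname{sc}}(\exp\mathfrak{g}^\Delta)$.

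There is, however, a genuine gap at the heart of your argument. A Maurer--Cartan element $\xi$ of $L=\Gamma\bigl(X,\mathcal{E}nd^\bullet(E^\bullet)\otimes\mathcal{R}^\bullet\bigr)$ is \emph{not} a global section of $\mathcal{E}nd^1(E^\bullet)$: it has components in positive resolution degree, e.g.\ a \v{C}ech $1$-cochain (or Dolbeault $(0,1)$-form) with values in $\mathcal{E}nd^0(E^\bullet)$. Consequently the expression ``$(E^\bullet\otimes_\KK A,\,d_E+\xi)$ is a complex of $A$-flat $\alpha$-twisted sheaves on $X\times\Spec A$'' is not literally meaningful: the degree-$(1,0)$ part of $\xi$ perturbs the local differentials, while the degree-$(0,1)$ part modifies the gluing (or the holomorphic structure), and one must \emph{extract} the deformed twisted sheaf from this mixed data. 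This extraction is exactly where the twisted structure intervenes: after exponentiating the \v{C}ech component to new transition maps $e^{m_{ij}}\in\mathcal{H}om^0(E_i^\bullet|_{U_{ij}},E_j^\bullet|_{U_{ij}})\otimes\mathfrak{m}_A$, the Maurer--Cartan equation in simplicial degree $2$ only gives $e^{m_{jk}}e^{-m_{ik}}e^{m_{ij}}=\alpha_{ijk}\id+\text{(homotopy)}$ on triple overlaps, and one needs the homotopy-uniqueness of liftings of the $g_{ij,A}$ to the resolution, together with local triviality of deformations of locally free sheaves on affine (or Stein) patches, to see that passing to $\oH^0$ yields a genuine $(\alpha,\mathcal{U})$-deformation and that gauge equivalence matches isomorphism of deformations. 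In the paper this is the content of Theorems~\ref{thm:locally free} and~\ref{thm:general} via the explicit description of $\mathrm{Z}^1_{\operatorname{sc}}(\exp\mathfrak{g}^\Delta)$; your proposal names these issues as ``obstacles'' but does not resolve them, and the direct perturbation argument you sketch would fail without this unpacking. A secondary omission: you assert the existence of twisted locally free resolutions in both settings, but this is where projectivity is used (C\u{a}ld\u{a}raru's lemma), and it is the reason the paper can drop projectivity only in the locally free case.
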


When the $\alpha$-twisted sheaf $F$ is locally free, we can drop the projectivity assumptions (see Theorem~\ref{thm:locally free}). 

We refer to \cite{PhD:Caldararu,HuybrechtsStellari,HuybrechtsStellari:ProofofCaldararu} for a general treatment of twisted sheaves and to \cite{Lieblich:TwistedSheaves,Yoshioka:Twisted} for a description of first order deformations and moduli spaces of twisted sheaves.

Let us briefly recall what the statement above means. A differential graded (DG) Lie algebra is a DG vector space $(L=\bigoplus_{i\in\Z}L^i,d)$ endowed with a graded bracket $[-,-]\colon L\times L\to L$ satisfying certain properties. 
To a DG Lie algebra $L$ one can associate a deformation functor $\Def_L\colon\Art_\KK\to\Sets$. Then one says that a given deformation problem $\mathcal{D}\colon\Art_\KK\to\Sets$ is \emph{controlled} by the DG Lie algebra $L$ if there exists a natural isomorphism $\mathcal{D}\cong\Def_L$ of functors. One advantage of this approach is that the DG Lie algebra $L$ provides both the tangent space $\oH^1(L)$ and a complete obstruction theory with values in $\oH^2(L)$ for the controlled deformation problem. We refer to the book \cite{Manetti:LMDT} for details concerning the approach to deformation theory via DG Lie algebras.

When the Brauer class is trivial, the statement is proved in \cite{FIM} and our arguments closely follow theirs.

The idea of the proof of Theorem~A is classical; for simplicity, let us suppose that $F$ is locally free. Then up to take a sufficiently refined open cover of $X$, the deformation data are reduced to gluing data, which are then encoded in the claimed DG Lie algebra using the formalism of semicosimplicial DG Lie algebras (\cite{FMM:Semicosimplicial}).
\medskip

As a corollary, we extend to twisted sheaves formality results of \cite{BMM:Formality,Budur_Zhang_2019,MO22}.

Recall that a DG Lie algebra is \emph{formal} if it is quasi-isomorphic to its cohomology. Geometrically, this implies the quadraticity of the corresponding Kuranishi map. More precisely, in this case we have that the versal deformation can be described via quadratic equations in $\oH^1(L)$ as the preimage of the obstruction map. We also wish to point out that, for a coherent sheaf on a smooth projective surface, formality and quadraticity are indeed equivalent~\cite{BMM:quadraticity}. 

\begin{thm}[Theorem~\ref{thm:kodaira 0} and Theorem~\ref{thm:formality of twisted hyper-holomorphic}]\text{}
    \begin{enumerate}
        \item Let $S$ be a smooth and projective minimal surface of Kodaira dimension $0$ and $H$ an ample class. If $F$ is a $H$-polystable $\alpha$-twisted sheaf, then the DG Lie algebra $\RHom_{(S,\alpha)}(F,F)$ is formal.
        \item Let $X$ be a compact hyper-K\"ahler manifold and $\omega$ a K\"ahler class. If $E$ is a $\omega$-projectively hyper-holomorphic locally free $\alpha$-twisted sheaf, then the DG Lie algebra $\RHom_{(X,\alpha)}(E,E)$ is formal.
    \end{enumerate}
\end{thm}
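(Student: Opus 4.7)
The plan is to reduce both statements to the corresponding untwisted formality theorems of \cite{BMM:Formality} and \cite{MO22} via the standard observation that \emph{endomorphism sheaves of twisted sheaves are untwisted}. Concretely, pick a locally free resolution $F^\bullet \to F$ in the twisted category (in part~(2) we already have $F^\bullet = E$). Then the DG sheaf of algebras $\mathcal{A}^\bullet \defeq \mathcal{H}om^{\bullet}_{(X,\alpha)}(F^\bullet, F^\bullet)$ is an honest untwisted object on $X$, since the Brauer cocycles $\alpha$ and $\alpha^{-1}$ arising from the two factors cancel. Theorem~A then says that $\RHom_{(X,\alpha)}(F,F)$ is quasi-isomorphic, as a DG Lie algebra, to $\R\Gamma(X, \mathcal{A}^\bullet)$, which is a DG Lie algebra built entirely out of untwisted data, carrying the residual structure inherited from $F$.

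\textbf{Part (1).} The formality result of \cite{BMM:Formality} for polystable sheaves on a minimal surface of Kodaira dimension $0$ rests on two ingredients, both of which should survive the twisting. The first is semisimplicity of the global endomorphism algebra, which follows from $H$-polystability of $F$ as an $\alpha$-twisted sheaf and passes to $\oH^0(S, \mathcal{A}^\bullet)$. The second is a $2$-Calabi--Yau structure coming from Serre duality: since $K_S$ is numerically trivial, hence torsion, twisted Serre duality on $\Db(S,\alpha)$ yields a perfect pairing on $\RHom^\bullet_{(S,\alpha)}(F,F)$ which, after the reduction above, is realized as a trace pairing on $\mathcal{A}^\bullet$, possibly after tensoring $F$ by a line bundle trivializing an appropriate power of $K_S$ and descending from the associated étale cyclic cover. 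The untwisted formality criterion of \cite{BMM:Formality} then applies directly to $\mathcal{A}^\bullet$.

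\textbf{Part (2).} Following Verbitsky, the hypothesis that $E$ is $\omega$-projectively hyper-holomorphic should be equivalent to the (honest) Azumaya algebra $\mathcal{A} \defeq \mathcal{E}nd_{(X,\alpha)}(E)$ admitting a hyper-holomorphic hermitian connection, i.e.\ one whose curvature is $\SU(2)$-invariant with respect to the quaternionic action on $\Omega^{\bullet}(X, \mathcal{A})$. Granting this, the Dolbeault DG Lie algebra computing $\R\Gamma(X, \mathcal{A})$ acquires the quaternionic $\mathfrak{sl}(2)^{\oplus 3}$-action exploited in \cite{MO22}: the harmonic representatives form a sub-DG-Lie-subalgebra, and a $\partial\bar\partial$-type Kähler identity produces a chain of quasi-isomorphisms witnessing formality. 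Combined with the reduction above, this gives formality of $\RHom_{(X,\alpha)}(E,E)$.

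\textbf{Main obstacle.} In part~(1), the delicate step is to verify that the pairing produced by twisted Serre duality coincides with the trace pairing on $\mathcal{A}^\bullet$ required in \cite{BMM:Formality}, and that $H$-polystability of $F$ translates correctly to the kind of semisimplicity used there; the subtlety compared to the untwisted case is that one cannot globally split off the trace component, since the determinant is only defined up to the twist. In part~(2), the crux is to make precise the passage from $\omega$-projective hyper-holomorphicity of $E$ to a genuine hyper-holomorphic connection on $\mathcal{E}nd_{(X,\alpha)}(E)$, so that the analytic machinery of \cite{MO22} can be applied unchanged.
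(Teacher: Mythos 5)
Your strategy for part (2) is essentially the paper's: since $\omega$-projective hyper-holomorphicity is \emph{defined} by requiring the induced connection $\tilde\nabla$ on the untwisted bundle $\cE nd(E)$ to be $\omega$-hyper-holomorphic, there is no ``passage'' left to make precise --- the paper simply checks that $\bigl(A^{0,*}_X(\cE nd(E)),\,\tilde\nabla^{0,1},\,\debar_J\bigr)$ with $\debar_J=J^{-1}\circ\tilde\nabla^{1,0}\circ J$ is a DGMS algebra in the sense of \cite{MO22} (derivation property, $\debar_J^2=0$, $[\debar,\debar_J]=0$, strong $\debar\debar_J$-lemma, the last two using that $\tilde\nabla$ is metric and hyper-holomorphic) and invokes the formality theorem there. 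Your sketch of part (2) matches this.

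Part (1), however, has a genuine gap. You propose to apply the untwisted formality criterion of \cite{BMM:Formality} directly to $\mathcal{A}^\bullet=\cE nd^\ast(E^\bullet)$, using a pairing ``coming from Serre duality since $K_S$ is torsion''. But for Enriques and bielliptic surfaces $K_S\not\cong\cO_S$, so twisted Serre duality pairs $\Ext^i_{(S,\alpha)}(F,F)$ with $\Ext^{2-i}_{(S,\alpha)}(F,F\otimes K_S)$, and the trace pairing on $\cE nd^\ast(E^\bullet)$ is \emph{not} non-degenerate in cohomology: the quasi-cyclic ($2$-Calabi--Yau) hypothesis of the criterion fails on the nose. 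You flag this yourself but leave it unresolved. The paper's resolution is the actual content of the proof: set $\cC_S=\cO_S\oplus K_S\oplus\cdots\oplus K_S^{n-1}$ and enlarge to $\widetilde L=A^{0,*}_S\bigl(\cE nd^\ast(E^\bullet\otimes\cC_S)\bigr)$, which \emph{is} quasi-cyclic of degree $2$ (trace into $\cC_S$, projection onto the $K_S$-summand, integration, non-degeneracy by Serre duality); construct a resolution $E^\bullet\to F$ that is $\Aut(F)$-equivariant with a rational, finitely supported action, so that the linearly reductive group $\Aut_{(S,\alpha)}(F\otimes\cC_S)$ acts suitably on $\widetilde L$ --- this is where polystability enters, and it is a statement about a group action on the DG Lie algebra, not the semisimplicity of $\oH^0$ that you invoke; conclude formality of $\widetilde L$ from \cite[Corollary~4.5]{BMM:Formality}; and finally descend to the sub-DG Lie algebra $A^{0,*}_S(\cE nd^\ast(E^\bullet))\subset\widetilde L$ by Manetti's formality transfer \cite{Manetti:FormalityTransfer}. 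Without the enlargement by $\cC_S$ and the final transfer step, your argument does not close for the Enriques and bielliptic cases.
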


Here a locally free $\alpha$-twisted sheaf on a compact hyper-K\"ahler manifold is called $\omega$-\emph{projectively hyper-holomorphic} if the untwisted endomorphism sheaf $\cE nd(E)$ is $\omega$-hyper-holomorphic, i.e.\ it deforms along the twistor line associated to $\omega$ (\cite{Verbitsky:hyperholomorphic}). Projectively hyper-holomorphic twisted sheaves are already present in literature (e.g.\ \cite{Markman:modular}) and they have good modular properties. For example, they have been used to prove the D-equivalence conjecture (\cite{MSYZ:D-Equiv}) and to construct new projective models of hyper-K\"ahler manifolds (\cite{Bottini:OG10twisted}).
We point out though that it is still unclear what the right notion of a twisted hyper-holomorphic locally free sheaf is. We speculate on it in Remark~\ref{rmk:speculation}.

\subsection*{Plan of the paper}
We collect in Section~\ref{section:twisted} the basic properties of twisted sheaves. In particular, we define the deformation functor of a twisted sheaf and prove that it does not depend on the choice of the representative of the Brauer class and the open cover. In Section~\ref{section:algebra} we collect the algebraic background needed for the proof of Theorem~A, which is then proved in Section~\ref{section:DG Lie of twisted}. We divide the proof in two cases, the locally free case and the general case, each proved in a different section. Beside making the arguments more clear, this has the advantage of make evident where the hypothesis are used. Finally, we prove Theorem~B in Section~\ref{section:surfaces} and Section~\ref{section:HK}, respectively addressing the case of minimal surfaces of Kodaira dimension $0$ and the case of hyper-K\"ahler manifolds.

\subsection*{Acknowledgements}
We wish to thank Donatella Iacono, Emanuele Macrì, Marco Manetti and Paolo Stellari for interest and important discussions at several stages of this work.

The first author is partially supported by the PRIN 20228JRCYB “Moduli spaces and special varieties”, of “Piano Nazionale di Ripresa e Resilienza, Next Generation EU”.

The second author is partially funded by the European Union - NextGenerationEU under the National Recovery and Resilience Plan (PNRR) - Mission 4 Education and research - Component 2 From research to business - Investment 1.1 Notice Prin 2022 - DD N. 104 del 2/2/2022, from title "Symplectic varieties: their interplay with Fano manifolds and derived categories", proposal code 2022PEKYBJ – CUP J53D23003840006.

Both authors are members of INDAM-GNSAGA.

\section{Twisted sheaves}\label{section:twisted}

\subsection{Basics on twisted sheaves}

We consider a field $\KK$ of characteristic $0$, and we work on a finite-dimensional, separated, quasi-compact, quasi-projective, Noetherian $\KK$-scheme $X$. We will also consider the analytic case, in which case $X$ will be a complex manifold. In this section we mostly focus on the algebraic setting, the analytic one being similar.

Recall that the \emph{cohomological Brauer group} $\Br'(X)$ is by definition the torsion part of the cohomology group $\oH^2_{\text{\'et}}(X,\cO_X^*)$. On the other hand, the \emph{geometric Brauer group} $\Br(X)$ is the group of equivalence classes of Azumaya $\cO_X$-algebras. We will always work with the geometric Brauer group $\Br(X)$; recall though that $\Br'(X)=\Br(X)$ for quasi-projective varieties, see~\cite[Theorem~1.1]{deJong:Gabber}.
\medskip

Let then $\alpha\in\Br(X)$ be a Brauer class, and let us choose a \v{C}ech representative $\tilde{\alpha}=\{\alpha_{ijk}\}$ for it with respect to an open covering $\mU=\{U_i\}_{i\in I}$.
\begin{definition}\label{defn:twisted sheaf}
    An $(\tilde{\alpha},\mU)$-\emph{twisted sheaf} $F$ on $X$ is the data of a collection of sheaves $\{F_i\}_{i\in I}$ of $\cO_{U_i}$-modules and a collection of isomorphisms $\{g_{ij}\colon F_i|_{U_{ij}}\to F_j|_{U_{ij}}\}_{i,j\in I}$ such that:
    \begin{enumerate}
        \item $g_{ii}=\id$;
        \item $g_{ij}^{-1}=g_{ji}$;
        \item $g_{ij}\circ g_{jk}\circ g_{ki}=\alpha_{ijk}\id$.
    \end{enumerate}
\end{definition}
A \emph{morphism} $f\colon F\to G$ of $(\tilde{\alpha},\mU)$-twisted sheaves is given by a collection $\{f_i\colon F_i\to G_i\}$ of morphisms that commute with the local isomorphisms. 

An $(\tilde{\alpha},\mU)$-twisted sheaf $F$ is \emph{coherent} if $F_i$ is a coherent sheaf for every $i\in I$.

In this way one can define an abelian category $\operatorname{Coh}(X,\tilde{\alpha},\mU)$ of coherent $(\tilde{\alpha},\mU)$-twisted sheaves.

\begin{lemma}[\protect{\cite[Lemma~1.2.3, Lemma~1.2.8]{PhD:Caldararu}}]\label{lemma.CaldararuindipendenzarappresentanteBrauer}
Let $\alpha\in\Br(X)$ be a cohomological Brauer class and let $(\tilde{\alpha}_1,\mU)$ and $(\tilde{\alpha}_2,\mU')$ be two representatives. Then there is an equivalence of categories
\[ \operatorname{Coh}(X,\tilde{\alpha}_1,\mU)\cong\operatorname{Coh}(X,\tilde{\alpha}_2,\mU'). \]
\end{lemma}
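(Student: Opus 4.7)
The plan is to reduce the statement to two elementary operations on \v{C}ech data: passing to a common refinement of the open covers, and multiplying the cocycle by a coboundary. Together these describe every possible change of representative for a fixed Brauer class $\alpha\in\Br(X)$, so establishing an equivalence in each case suffices.

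First, I would treat refinement. Let $\mU=\{U_i\}_{i\in I}$ be an open cover and $\mU'=\{V_j\}_{j\in J}$ a refinement, with a chosen refinement map $\rho\colon J\to I$ satisfying $V_j\subseteq U_{\rho(j)}$. The cocycle $\tilde\alpha=\{\alpha_{ijk}\}$ restricts to $\rho^{*}\tilde\alpha=\{\alpha_{\rho(i)\rho(j)\rho(k)}|_{V_{ijk}}\}$. Given $F=(\{F_i\},\{g_{ij}\})\in\operatorname{Coh}(X,\tilde\alpha,\mU)$, define $\rho^{*}F$ by $(\rho^{*}F)_j\defeq F_{\rho(j)}|_{V_j}$ with transition isomorphisms $g_{\rho(j)\rho(k)}|_{V_{jk}}$; the three axioms of \Cref{defn:twisted sheaf} for $\rho^*F$ are the restrictions of those for $F$. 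A direct check shows $\rho^{*}$ is a functor which is fully faithful, and essential surjectivity follows by a standard gluing argument: an $(\rho^{*}\tilde\alpha,\mU')$-twisted sheaf whose transition maps on $V_{jk}$ come from $U_{\rho(j)\rho(k)}$ can be glued on each $U_i$ using the descent datum induced on the cover $\{V_j\cap U_i\}_{j\in J}$ of $U_i$, since the obstruction cocycle is trivial on $U_i$. In particular the equivalence class of the category does not depend on the refinement map $\rho$.

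Next, suppose $\tilde\alpha_1$ and $\tilde\alpha_2$ are two \v{C}ech representatives of $\alpha$ on the \emph{same} cover $\mU$. After possibly refining (by the previous step) we may assume there exists a $1$-cochain $\beta=\{\beta_{ij}\}$ with values in $\cO_X^{*}$ such that
\[ \alpha_{2,ijk}=\alpha_{1,ijk}\cdot\beta_{jk}\beta_{ik}^{-1}\beta_{ij}, \]
and after further normalization we may assume $\beta_{ii}=1$ and $\beta_{ji}=\beta_{ij}^{-1}$. Define a functor
\[ \Phi\colon\operatorname{Coh}(X,\tilde\alpha_1,\mU)\longrightarrow\operatorname{Coh}(X,\tilde\alpha_2,\mU),\qquad (\{F_i\},\{g_{ij}\})\longmapsto(\{F_i\},\{\beta_{ij}\,g_{ij}\}), \]
acting as the identity on morphisms. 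The three conditions of \Cref{defn:twisted sheaf} for $\Phi(F)$ are immediate from the normalizations on $\beta$ and the coboundary relation above; a morphism of $\tilde\alpha_1$-twisted sheaves trivially induces one of $\tilde\alpha_2$-twisted sheaves since the scalar factor $\beta_{ij}$ commutes with everything. The inverse functor is given by the same formula with $\beta_{ij}^{-1}$, so $\Phi$ is an isomorphism of categories, and in particular an equivalence.

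Finally, given two arbitrary pairs $(\tilde\alpha_1,\mU)$ and $(\tilde\alpha_2,\mU')$ representing $\alpha$, one chooses a common refinement $\mU''$ of both and composes the two refinement equivalences with the coboundary equivalence obtained on $\mU''$ to produce the desired equivalence $\operatorname{Coh}(X,\tilde\alpha_1,\mU)\cong\operatorname{Coh}(X,\tilde\alpha_2,\mU')$. I do not foresee a serious obstacle: the only real care needed is in the bookkeeping of the normalizations of the \v{C}ech cochains (so that conditions (1) and (2) of \Cref{defn:twisted sheaf} survive the twisting by $\beta$), and in verifying that the refinement functor is essentially surjective — both standard \v{C}ech-cohomological arguments, and both already carried out in \cite{PhD:Caldararu}.
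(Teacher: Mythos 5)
Your argument is correct and is exactly the content of the two results of C\u{a}ld\u{a}raru that the paper cites in lieu of a proof: independence of the cover via restriction along a refinement map, independence of the cocycle via twisting the transition isomorphisms by a $1$-cochain $\beta$ realizing the coboundary, and a common refinement to combine the two. The only phrase worth sharpening is ``the obstruction cocycle is trivial on $U_i$'' in the essential-surjectivity step: the restricted cocycle $\alpha_{\rho(j)\rho(k)\rho(l)}$ is not literally trivial on $U_i$ but is split there by the cochain $\{\alpha_{i\rho(j)\rho(k)}\}$ (using the cocycle identity with the extra index $i$), which is what lets you correct the transition maps and glue.
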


In view of Lemma~\ref{lemma.CaldararuindipendenzarappresentanteBrauer}, it is customary to simply talk about $\alpha$-twisted sheaves. Nevertheless, we shall keep track of the chosen cover in our notation, whenever such choice is important for some practical reasons. Notice that if $\alpha$ is trivial, then a $\alpha$-twisted sheaf is a classical sheaf and $\operatorname{Coh}(X,\alpha)\cong\operatorname{Coh}(X)$.

In a similar way, we will consider the derived category 
\[ \operatorname{D}^b(X,\alpha):=\operatorname{D}^b(\operatorname{Coh}(X,\alpha)) \] 
of $\alpha$-twisted coherent sheaves on $X$.

The following properties can be found at the end of \cite[Section~1.2]{PhD:Caldararu}. Let $F$ be an $\alpha$-twisted sheaf and $G$ a $\beta$-twisted sheaf, where $\alpha,\beta\in\Br(X)$ are two cohomological Brauer classes. Then:
\begin{itemize}
    \item $F\otimes G$ is a $\alpha\beta$-twisted sheaf;
    \item $\mathcal{H}om(F,G)$ is a $\alpha^{-1}\beta$-twisted sheaf    
\end{itemize}
Also, if $f\colon Y\to X$ is a morphism, then:
\begin{itemize}
    \item if $F$ is a $\alpha$-twisted sheaf on $X$, then $f^*F$ is an $f^*\alpha$-twisted sheaf on $Y$;
    \item if $G$ is a $f^*\alpha$-twisted sheaf on $Y$, then $f_*G$ is a $\alpha$-twisted sheaf on $X$;
\end{itemize}

In particular, it is important to point out that for any $\alpha$-twisted sheaf $F$ on $X$, the endomorphism sheaf $\mathcal{E}nd(F)$ is always a regular sheaf.

\subsection{Infinitesimal deformations of twisted sheaves}\label{section:infinit def of twisted}

The aim of this section is to introduce the deformation functor associated to a twisted sheaf.

\textbf{Notation.} We shall adopt the following conventions.
\begin{itemize}
    \item For any $A\in \mathsf{Art}_{\KK}$, we shall talk about $\mathcal{O}_X\otimes A$-modules, instead of $\mathcal{O}_X\times_{\Spec(\KK)}\Spec(A)$-modules.
    \item For any $A\in \mathsf{Art}_{\KK}$ we consider the morphism $\iota_A \colon \Spec(\KK)\to \Spec(A)$ corresponding to the projection $A\to\nicefrac{A}{\mathfrak{m}_A}=\KK$. Given a $\mathcal{O}_X\otimes A$-module $F$ we shall write $F\otimes_A\KK$ to denote the pullback of $F$ under $\iota_A$. Similarly, given a $\mathcal{O}_X$-module $E$, with an abuse of notation we shall keep writing $E$ to denote its pushforward under $\iota_A$.
    \item For any $A\in \mathsf{Art}_{\KK}$, given a map of $\mathcal{O}_X\otimes A$-modules $f\colon E\to G$, we denote either by $\bar{f}\colon E\otimes_A\KK\to G\otimes_A\KK$ or by $f\otimes_A\KK$ its reduction modulo the maximal ideal of $A$.
    \item For any $A\in \mathsf{Art}_{\KK}$, given an $\mathcal{O}_X$-module $E$ we shall denote by $\pi_A\colon E\otimes A\to E$ the morphism of $\mathcal{O}_X\otimes A$-modules given by the natural projection. Notice that the reduction $\bar{\pi}_A$ is the identity on $E$.
\end{itemize}

\begin{definition}\label{def.twisteddeformation}
    Let $\mathcal{U}=\{U_i\}_{i\in I}$ be an open cover of $X$ and let $F=\left(\{F_{i}\},\{g_{ij}\},\{\alpha_{ijk}\}\right)$ be a $(\alpha,\mathcal{U})$-twisted sheaf. A $(\alpha,\mathcal{U})$-\emph{deformation} of $F$ over $A\in\mathsf{Art}_{\KK}$ is a triple $\left(\{F_{i,A}\},\{h_{i,A}\},\{g_{ij,A}\}\right)$ such that:
    \begin{enumerate}
        \item $F_{i,A}$ is a coherent sheaf on $U_i\times\Spec(A)$, flat over $\Spec(A)$, for every $i\in I$;
        \item $h_{i,A}\colon F_{i,A}\to F_i$ is a morphism of $\mathcal{O}_{U_i}\otimes A$-modules such that the reduction $\bar{h}_i\colon F_{i,A}\otimes_A \KK\to F_i$ is an isomorphism, for every $i\in I$;
        \item $g_{ij,A}\colon F_{i,A}\vert_{U_{ij}}\to F_{j,A}\vert_{U_{ij}}$ is an isomorphism of $\mathcal{O}_{U_{ij}}\otimes A$-modules such that
        \[ \xymatrix{ F_{i,A}\vert_{U_{ij}}\ar@{->}[r]^{g_{ij,A}}\ar@{->}[d]_{h_{i,A}} & F_{j,A}\vert_{U_{ij}}\ar@{->}[d]^{h_{j,A}}\\
        F_{i}\vert_{U_{ij}}\ar@{->}[r]_{g_{ij}} & F_{j}\vert_{U_{ij}}  } \]
        is a commutative diagram of $\mathcal{O}_{U_{ij}}\otimes A$-modules, for every $i,j\in I$;
        \item for every $i,j,k\in I$, the twisted cocycle condition is satisfied:
        \[ g_{ij,A}\vert_{U_{ijk}\otimes A}\circ g_{jk,A}\vert_{U_{ijk}\otimes A}\circ g_{ki,A}\vert_{U_{ijk}\otimes A} = (\alpha_{ijk}\otimes1_A)\id \; . \]
    \end{enumerate}
\end{definition}

\begin{remark}
    In the setting of Definition~\ref{def.twisteddeformation}, choosing $\{\alpha_{ijk}=1\}$ we recover the usual definition of deformation of the coherent sheaf $F$ over $A\in \Art_{\KK}$.
\end{remark}

\begin{definition}
    Let $\mathcal{U}=\{U_i\}_{i\in I}$ be an open cover of $X$ and let $F=\left(\{F_{i}\},\{g_{ij}\},\{\alpha_{ijk}\}\right)$ be an $(\alpha,\mathcal{U})$-twisted sheaf. Let $A\in\Art_{\KK}$; an \emph{isomorphism} between two $(\alpha,\mathcal{U})$-deformations $\left(\{F_{i,A}\},\{h_{i,A}\},\{g_{ij,A}\}\right)$ and $\left(\{F_{i,A}'\},\{h_{i,A}'\},\{g_{ij,A}'\}\right)$ of $F$ over $A$ is a collection of isomorphisms $\varphi_{i,A}\colon F_{i,A}\to F_{i,A}'$ such that
    \[ \xymatrix{ F_{i,A}\vert_{U_{ij}}\ar@{->}[r]^{g_{ij,A}}\ar@{->}[d]_{\varphi_{i,A}\vert_{U_{ij}}} & F_{j,A}\vert_{U_{ij}}\ar@{->}[d]^{\varphi_{j,A}\vert_{U_{ij}}}\\
    F_{i,A}'\vert_{U_{ij}}\ar@{->}[r]_{g_{ij,A}'} & F_{j,A}'\vert_{U_{ij}}  } \qquad \qquad
    \xymatrix{ F_{i,A}\ar@{->}[r]^{h_{i,A}}\ar@{->}[d]_{\varphi_{i,A}} & F_{i}\ar@{->}[d]^{\id_{F_i}}\\
    F_{i,A}'\ar@{->}[r]_{h_{i,A}'} & F_{i}  } \]
    are commutative diagrams for every $i,j\in I$.
\end{definition}

\begin{definition}\label{def.deformationfunctorconricoprimento}
    Let $F$ be an $(\alpha,\mathcal{U})$-twisted sheaf on $X$. Consider the functor
    \[ \Def^{(\alpha,\mU)}_F\colon\Art_{\KK}\longrightarrow\Sets \]
    defined on $A\in\mathsf{Art}_{\KK}$ as the set of isomorphism classes of $(\alpha,\mathcal{U})$-deformations of $F$ over $A$.
\end{definition}

\begin{remark}
    The functor $\Def^{(\alpha,\mU)}_F$ is easily described on morphisms. If $f\colon A\to B$ is a morphism in $\Art_{\KK}$ and $F_A=\left(\{F_{i,A}\},\{h_{i,A}\},\{g_{ij,A}\}\right)$ is a $(\alpha,\mathcal{U})$-deformation of $F$ over $A$, then 
    \[ \Def^{(\alpha,\mU)}_F(f)(F_A)=\left( \left(\id_{U_i}\times f^{\sharp}\right)^*F_{i,A}, \left(\id_{U_i}\times f^{\sharp}\right)^*h_{i,A}, \left(\id_{U_{ij}}\times f^{\sharp}\right)^*g_{ij,A} \right), \]
    where $f^{\sharp}\colon\Spec(B)\to\Spec(A)$.
\end{remark}

It is now important to point out that this definition does not depend neither on the choice of the representative of the class $\alpha\in\Br(X)$ nor on the choice of the open cover $\mathcal{U}$.

\begin{lemma}\label{lemma.Defnondipendedalcociclo}
    The definition of $\,\Def_F^{(\alpha,\mathcal{U})}$ does not depend on the choice of the representative $\tilde{\alpha}$ of the class $\alpha$.
    \begin{proof}
    This follows at once from the fact that the definition of twisted sheaf does not depend on the choice of the representative of the Brauer class $\alpha$, see \cite[Lemma~1.2.8]{PhD:Caldararu}. 
    \end{proof}
\end{lemma}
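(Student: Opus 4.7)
The plan is to make explicit the equivalence of categories underlying Lemma~\ref{lemma.CaldararuindipendenzarappresentanteBrauer} at the level of deformation functors. Since $\tilde{\alpha}_1=\{\alpha_{ijk}\}$ and $\tilde{\alpha}_2=\{\alpha'_{ijk}\}$ represent the same cohomological class with respect to the same open cover $\mathcal{U}$, they differ by a \v{C}ech coboundary: there exists $b=\{b_{ij}\}\in C^1(\mathcal{U},\cO_X^*)$ with $\alpha'_{ijk}=\alpha_{ijk}\cdot b_{ij}b_{jk}b_{ki}^{-1}$. Under Caldararu's equivalence, a $(\tilde{\alpha}_1,\mathcal{U})$-twisted sheaf $F=(\{F_i\},\{g_{ij}\})$ corresponds to the $(\tilde{\alpha}_2,\mathcal{U})$-twisted sheaf $F'=(\{F_i\},\{b_{ij}g_{ij}\})$ with identical underlying local pieces.

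Next, I would define a natural transformation $\Phi\colon\Def_F^{(\tilde{\alpha}_1,\mathcal{U})}\to\Def_{F'}^{(\tilde{\alpha}_2,\mathcal{U})}$ by rescaling only the gluing data, leaving everything else unchanged:
\[ \Phi_A\bigl(\{F_{i,A}\},\{h_{i,A}\},\{g_{ij,A}\}\bigr):=\bigl(\{F_{i,A}\},\{h_{i,A}\},\{(b_{ij}\otimes 1_A)\,g_{ij,A}\}\bigr). \]
The modified cocycle identity
\[ (b_{ij}\otimes 1_A)g_{ij,A}\circ(b_{jk}\otimes 1_A)g_{jk,A}\circ(b_{ki}\otimes 1_A)g_{ki,A}=(\alpha'_{ijk}\otimes 1_A)\,\id \]
follows immediately from the centrality of the units $b_{ij}\otimes 1_A$ and from the original $\tilde{\alpha}_1$-twisted cocycle condition, while the compatibility with the $h_{i,A}$ is automatic since these morphisms are not altered. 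An inverse natural transformation is constructed symmetrically using the cochain $\{b_{ij}^{-1}\}$.

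Finally, any isomorphism $\{\varphi_{i,A}\}$ between two $(\tilde{\alpha}_1,\mathcal{U})$-deformations still intertwines the rescaled gluing data, because the central factor $b_{ij}$ appears on both sides of the commutative square; hence $\Phi_A$ descends to isomorphism classes, and naturality in $A$ is immediate from the identity $(\id\times f^\sharp)^*(b_{ij}\otimes 1_A)=b_{ij}\otimes 1_B$. The main obstacle is pure bookkeeping: aligning the sign and order conventions between the twisted cocycle condition in item~(4) of Definition~\ref{def.twisteddeformation} and the explicit form of the coboundary relating $\tilde{\alpha}_1$ and $\tilde{\alpha}_2$, which fixes the correct placement of the $b_{ij}$ in the rescaled gluing maps.
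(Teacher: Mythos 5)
Your proof is correct and takes the same route as the paper, which simply invokes C\u{a}ld\u{a}raru's equivalence $\operatorname{Coh}(X,\tilde{\alpha}_1,\mathcal{U})\cong\operatorname{Coh}(X,\tilde{\alpha}_2,\mathcal{U})$; you have merely made that equivalence explicit (rescaling the gluing data by the coboundary $\{b_{ij}\}$) and verified that it lifts to $A$-deformations, isomorphism classes, and naturality in $A$. The only discrepancy is the cosmetic one you already flag yourself, namely matching the sign convention in the coboundary $\alpha'_{ijk}=\alpha_{ijk}\,b_{ij}b_{jk}b_{ki}$ with the placement of the $b_{ij}$ in the rescaled cocycle computation.
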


\begin{proposition}\label{prop:non dipende da U}
    Let $\alpha\in\Br(X)$ and $F$ a coherent $\alpha$-twisted sheaf. If $\,\mathcal{U}$ and $\mathcal{U}'$ are two open covers of $X$, then 
    \[ \Def_F^{(\alpha,\mathcal{U})}\cong\Def_F^{(\alpha,\mathcal{U}')}. \]
    \begin{proof}
        Let $\mathcal{U}=\{U_i\}_{i\in I}$ and $\mathcal{U}'=\{U_j'\}_{j\in J}$; define $\mathcal{U}''=\{U_{(i,j)}''\}_{(i,j)\in I\times J}$ where $U_{(i,j)}=U_i\cap U_j'$. For the choice of a representative $\tilde{\alpha}=\{\alpha_{ijk}\}\in \check{C}^2(\mathcal{U},\cO_{X}^*)$ of the class $\alpha\in\Br(X)$, we have a natural transformations
        \[ \Def_F^{(\alpha,\mathcal{U})}\to \Def_F^{(\alpha,\mathcal{U}'')} \]
        provided by restriction. 
        
        Notice that for any $i\in I$ the open subset $U_i$ is covered by the $U_{(i,j)}$'s, with $j$ varying in $J$. Therefore the map is an isomorphism, being $F$ untwisted on each $U_i$.
        The statement follows from Lemma~\ref{lemma.Defnondipendedalcociclo}.
        
        Putting all together we get $\Def_F^{(\alpha,\mathcal{U})}\cong \Def_F^{(\alpha,\mathcal{U}'')}\cong \Def_F^{(\alpha,\mathcal{U}')}$.
    \end{proof}
\end{proposition}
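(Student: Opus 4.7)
The natural strategy is to reduce to a common refinement. First, I would form $\mathcal{U}'' := \{U_i \cap U_j'\}_{(i,j) \in I \times J}$, which simultaneously refines both $\mathcal{U}$ and $\mathcal{U}'$, and prove the intermediate statement that $\Def_F^{(\alpha, \mathcal{U})} \cong \Def_F^{(\alpha, \mathcal{U}'')}$ (and symmetrically for $\mathcal{U}'$). The proposition then follows by composing the resulting two isomorphisms, with the invariance under change of \v{C}ech representative provided by Lemma~\ref{lemma.Defnondipendedalcociclo} used to reconcile the representatives obtained by pulling back from either side.

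The easy direction is restriction. Given a deformation $(\{F_{i,A}\}, \{h_{i,A}\}, \{g_{ij,A}\})$ on $\mathcal{U}$, I would set $F_{(i,j), A} := F_{i,A}|_{U_i \cap U_j'}$, restrict $h_{i,A}$ accordingly, and take the transition maps $g_{(i,j)(i',j'),A}$ to be the appropriate restrictions of $g_{ii',A}$, with pulled-back cocycle $\alpha_{(i_1,j_1)(i_2,j_2)(i_3,j_3)} := \alpha_{i_1 i_2 i_3}$. Naturality in $A \in \Art_{\KK}$ and compatibility with isomorphism classes are routine.

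The heart of the argument is the construction of the inverse by gluing. The key observation is that the normalization $g_{ii} = \id$ in Definition~\ref{defn:twisted sheaf} forces $\alpha_{iii} = 1$ for the chosen representative, so the restriction of $\alpha$ to each $U_i$ is trivialized. Consequently, given a $\mathcal{U}''$-deformation, for each fixed $i \in I$ the subcover $\{U_i \cap U_j'\}_{j \in J}$ of $U_i$ carries transition data $g_{(i,j)(i,k), A}$ satisfying the \emph{untwisted} cocycle condition, so the pieces $F_{(i,j), A}$ descend to a coherent $\cO_{U_i} \otimes A$-module $F_{i,A}$, flat over $A$, together with a morphism $h_{i,A} \colon F_{i,A} \to F_i$ whose reduction modulo $\mathfrak{m}_A$ is an isomorphism. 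The global transition maps $g_{ij,A}$ on $U_i \cap U_j$ are then produced by gluing the local data $g_{(i,k)(j,k), A}$ on the open cover $\{U_i \cap U_j \cap U_k'\}_{k \in J}$ of $U_i \cap U_j$, with agreement on triple overlaps ensured by the twisted cocycle condition on $\mathcal{U}''$.

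I expect the main technical obstacle to be bookkeeping: verifying that the reconstructed datum $(\{F_{i,A}\}, \{h_{i,A}\}, \{g_{ij,A}\})$ satisfies the twisted cocycle condition on $\mathcal{U}$ with the original representative $\{\alpha_{ijk}\}$, that the construction descends to isomorphism classes, and that the two procedures are mutually inverse up to canonical isomorphism. Uniqueness of the glued $F_{i,A}$ (up to unique isomorphism) follows from flatness and from the fact that its reduction modulo $\mathfrak{m}_A$ is the fixed sheaf $F_i$, so no essential ambiguity is introduced in any step.
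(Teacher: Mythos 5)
Your proposal is correct and follows essentially the same route as the paper: pass to the common refinement $\mathcal{U}''$, observe that restriction gives a natural transformation, invert it by regluing over each $U_i$ (where the pulled-back cocycle is trivial, i.e.\ the sheaf is untwisted), and invoke Lemma~\ref{lemma.Defnondipendedalcociclo} to change representatives on the two sides. You merely spell out the gluing argument that the paper's proof leaves implicit in the phrase ``being $F$ untwisted on each $U_i$''.
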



\subsection{A global approach to deformations of twisted sheaves}\label{section:global def of infinit def of twisted}

In the previous section we chose to define deformation functors with respect to an open cover. We can refer to this as a \emph{local definition}. On the other hand there is another natural definition that is independent of the choice of an open cover, so that being a \emph{global definition}. Of course the two notions are isomorphic, and the purpose of this section is to state rigorously this identification.

\begin{definition}
    Let $X$ and $B$ be two schemes over a field $\KK$; we denote by $p_X\colon X\times B \to X$ and $p_B\colon X\times B\to B$ the two projections.
    Let $\alpha\in\Br(X)$ be a Brauer class.
    \begin{enumerate}
        \item Let $B$ be a scheme over $\KK$.
        A family of $\alpha$-twisted sheaves on $X$ parametrised by $B$ is a $p_X^{\ast}\alpha$-twisted sheaf $\mathcal{F}$ of $X\times B$ that is flat over $B$. We denote by $(\mathcal{F},B)$ a family of $\alpha$-twisted sheaves. 
        
    
        \item Let $F$ be an $\alpha$-twisted sheaf on $X$ and $(B,0)$ a pointed space. A \emph{family of deformations} of $F$ over $(B,0)$ is a pair $(\mathcal{F}_B,\phi_B)$, where $(\mathcal{F}_B,B)$ is a family of $\alpha$-twisted sheaves and $\phi_B\colon\mathcal{F}_B\otimes \KK(0)\to F$ is an isomorphism. 
        
        \item Two families of deformations $(\mathcal{F}_1,\phi_1)$ and $(\mathcal{F}_2,\phi_2)$ of $F$ over $(B,0)$ are isomorphic if there exists an isomorphism $\Phi\colon\mathcal{F}_1\to\mathcal{F}_2$ such that $\phi_2\circ(\Phi\times \KK(0))=\phi_1$.
    \end{enumerate}
\end{definition}

\begin{definition}
    For $i=1,2$, let $(B_i,0_i)$ be a pointed space and $(\cF_i,\phi_i)$ a family of deformations of $F$ over $(B_i,0_i)$. A morphism $(f,\tilde{f})\colon(B_1,\cF_1,\phi_1)\to(B_2,\cF_2,\phi_2)$ is the datum of:
    \begin{itemize}
        \item a morphism $f\colon B_1\to B_2$ such that $f(0_1)=0_2$;
        \item a morphism $\tilde{f}\colon (\id\times f)^*\cF_2\to\cF_1$ of $p_X^*\alpha$-twisted sheaves commuting with $\phi_1$ and $\phi_2$.
    \end{itemize}
\end{definition}

\begin{remark}
    If $(B_1,0_1)$ and $(B_2,0_2)$ are two pointed spaces, we denote by $p_i\colon X\times B_i\to X$ the projection on the first factor, and by $p_{B_i}\colon X\times B_i\to B_i$ the projection on the second factor. Then, in the definition above, $(\id\times f)^*\cF_2$ is a $(\id\times f)^*p_2^*\alpha=p_1^*\alpha$-twisted sheaf. This justifies the last requirement that $\tilde{f}$ is an isomorphism of $p_X^*\alpha$-twisted sheaves.
\end{remark}

\begin{definition}
    Let $F$ be an $\alpha$-twisted sheaf on $X$. Consider the functor
    \begin{equation}
        \Def^\alpha_F\colon\Art_k\longrightarrow\Sets
    \end{equation}
    defined on any $A\in\Art_k$ as the set of isomorphism classes of families of $\alpha$-deformations $(\mathcal{F}_A,\phi_A)$ of $F$ over $(\Spec(A),0)$. 
\end{definition}

\begin{remark}
    As before, the functor $\Def^\alpha_F$ is easily described on morphisms. If $f\in\operatorname{Hom}_{\Art_{\KK}}(A,B)$, then
    \[ \Def_F^\alpha(f)(\mathcal{F}_A,\phi_A)=\left(\left(\id_X\times f^\sharp\right)^*\mathcal{F}_A, \left(\id_X\times f^\sharp\right)^*\phi_A\right). \]
\end{remark}

Our aim is now to prove that the functor $\Def^\alpha_F$ is isomophic to the functor $\Def_F^{(\alpha,\mathcal{U})}$ introduced in Definition~\ref{def.deformationfunctorconricoprimento}.

\begin{proposition}\label{prop:Def alpha = Def alpha U}
    Let $F$ be an $\alpha$-twisted sheaf on $X$. For any open cover $\mU$ representing $\alpha$, there exists a natural isomorphism
    \begin{equation}\label{eqn:specifying the cover} 
        \Def^\alpha_F \longrightarrow\,\Def^{(\alpha,\mU)}_F. 
    \end{equation}
    \begin{proof}
        Let us write $F=\{F_i,\phi_{i,j}\}$ relative to an open cover $\mU=\{U_i\}$ where the twist $\alpha$ is represented. Let $(\cF_B,\phi_B)$ be a family of deformations of $F$ parametrised by $(B,0)$. Recall that $\cF_B$ is a sheaf on the product space $X\times B$ flat over $B$. Then the restrictions $\cF_{B,i}=\cF_B|_{U_i\times B}$ are families of deformations of the sheaves $F_i$ (in the classical sense). Moreover, the twist $p_X^*\alpha$ can be represented on the open cover $\mU_B=\{U_i\times B\}$ and the $p_X^*\alpha$-twisted sheaf $\cF_B$ is obtained by patching together the sheaves $\cF_{B,i}$ via the cocycle $\{\alpha_{ijk}\otimes1_B\}$. This defines the natural transformation (\ref{eqn:specifying the cover}), which is easily seen to be an isomorphism.
    \end{proof}
\end{proposition}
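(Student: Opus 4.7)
The plan is to construct the natural transformation explicitly by restricting a global family to the cover $\mU$, and to exhibit an inverse via twisted gluing. Fix a representative $\tilde\alpha=\{\alpha_{ijk}\}$ on $\mU=\{U_i\}_{i\in I}$ and let $A\in\Art_\KK$. On the product $X\times\Spec(A)$ the cover $\mU_A:=\{U_i\times\Spec(A)\}$ represents the pullback Brauer class $p_X^*\alpha$ via the cocycle $\{\alpha_{ijk}\otimes 1_A\}$, so one may compare the two descriptions on this common cover.

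Given a global family $(\cF_A,\phi_A)$, I would set $F_{i,A}:=\cF_A|_{U_i\times\Spec(A)}$, take $g_{ij,A}$ to be the gluing isomorphisms that present $\cF_A$ as a $p_X^*\alpha$-twisted sheaf on $\mU_A$, and define $h_{i,A}$ as the composition $F_{i,A}\to F_{i,A}\otimes_A\KK\xrightarrow{\phi_A|_{U_i}} F_i$. Flatness of $\cF_A$ over $\Spec(A)$ restricts to flatness of each $F_{i,A}$, and conditions~(1)--(4) of Definition~\ref{def.twisteddeformation} follow immediately from the $p_X^*\alpha$-twisted sheaf structure. An isomorphism $\Phi\colon\cF_1\to\cF_2$ compatible with the $\phi_i$ restricts to isomorphisms $\varphi_{i,A}:=\Phi|_{U_i\times\Spec(A)}$ satisfying the required compatibilities, so the construction descends to isomorphism classes.

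For the inverse, given a local deformation $(\{F_{i,A}\},\{h_{i,A}\},\{g_{ij,A}\})$, the sheaves $F_{i,A}$ on $U_i\times\Spec(A)$ together with the $g_{ij,A}$, which satisfy the twisted cocycle condition with respect to $\{\alpha_{ijk}\otimes 1_A\}$, form by definition a $p_X^*\alpha$-twisted sheaf $\cF_A$ on $X\times\Spec(A)$; flatness is inherited since it is local. The commutative squares in Definition~\ref{def.twisteddeformation}(3) ensure that the $h_{i,A}$ assemble into a global morphism $\phi_A\colon\cF_A\otimes_A\KK\to F$, which is an isomorphism because each reduction $\bar h_{i,A}$ is.

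The main thing to verify is that these two assignments are mutually inverse and functorial in $A$. Both checks are essentially tautological once one recognises that a $p_X^*\alpha$-twisted sheaf on $X\times\Spec(A)$ is by construction a compatible collection on $\mU_A$ satisfying the twisted cocycle condition with respect to $\{\alpha_{ijk}\otimes 1_A\}$; the only bookkeeping is that pullback along $\id_X\times f^\sharp$ for $f\colon A\to B$ in $\Art_\KK$ commutes with restriction to each $U_i$. The independence of the chosen representative and of the cover has already been established in Lemma~\ref{lemma.Defnondipendedalcociclo} and Proposition~\ref{prop:non dipende da U}, so no extra argument is needed on that side.
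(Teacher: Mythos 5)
Your proof is correct and follows essentially the same approach as the paper: restriction of a global family to the cover $\mU_A$ on which $p_X^*\alpha$ is represented by $\{\alpha_{ijk}\otimes 1_A\}$ gives the natural transformation, and twisted gluing gives the inverse. The only difference is that you spell out the inverse construction and the compatibility of isomorphisms explicitly, whereas the paper declares the map to be ``easily seen to be an isomorphism''; your added detail is consistent with what that remark leaves implicit.
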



\subsection{Twisted complex vector bundles}\label{section:twisted complex vb}
For later use, we include here a quick digression on complex manifolds and complex twisted vector bundles. We refer to \cite{Perego:Twisted} for a general treatment.

Let $X$ be a complex manifold and $\alpha\in\oH^2(X,\cO_X^*)$ a twist. If $\mU=\{U_i\}$ is an open cover of $X$ where $\alpha$ is represented by the cocycle $\{\alpha_{ijk}\}$, then a \emph{$\alpha$-twisted complex vector bundle} $E$ is:
\begin{itemize}
    \item a collection of complex vector bundles $E_i$ on each $U_i$,
    \item a collection of $\mathcal{C}^{\infty}$-morphisms $g_{i,j}\colon E_{i}|_{U_{ij}}\to E_{j}|_{U_{ij}}$,
such that 
\begin{enumerate}
    \item $g_{ii}=\id$;
    \item $g_{ji}=g_{ij}^{-1}$;
    \item $g_{ij}\circ g_{jk}\circ g_{ki}=\alpha_{ijk}\id$.
\end{enumerate}
\end{itemize}

A \emph{holomorphic structure} on a $\alpha$-twisted complex vector bundle $E$ is a collection of holomorphic structures on each $E_i$, and a \emph{$\alpha$-twisted holomorphic vector bundle} is a $\alpha$-twisted vector bundle $E$ admitting a holomorphic structure such that the transinction morphisms $g_{ij}$ are holomorphic. 

In particular there is an equivalence of categories between the category of $\alpha$-twisted locally free sheaves on $X$ and the category of $\alpha$-twisted holomorphic complex vector bundles on $X$.

The definitions of deformation functors above can be carried out in this setting with minimum changes.



\section{Algebraic background}\label{section:algebra}

\subsection{DG Lie algebras and their deformation functors}\label{section:DGLA}

Recall that a DG Lie algebra is a triple $(L,d,[-,-])$ such that $L=\oplus_{n\in\mathbb{Z}}L^n$ is a graded $\KK$-vector space, $d$ is the differential and $[-,-]$ is a graded bracket satisfying some compatibility conditions (see \cite{Manetti:LMDT}).
For short, we denote by $\operatorname{DGLA}$ the category of DG Lie algebras, where morphisms are maps of DG vector spaces commuting with brackets.

We say that two DG Lie algebras $L$ and $L'$ are quasi-isomorphic if there exists a roof $L\leftarrow M \to L'$, where both maps are quasi-isomorphisms (i.e.\ the induced map on cohomology is a degreewise isomorphism).

If $L\in\operatorname{DGLA}$ is a DG Lie algebra, there is a deformation functor associated to it defined as
\[ \Def_L\colon\Art_{\KK}\to\Sets,\qquad A\mapsto\frac{\operatorname{MC}_L(A)}{\sim}, \]
where $\operatorname{MC}_L(A)=\{ \ell\in L^1\otimes\mathfrak{m}_A\mid d\ell+\frac{1}{2}[\ell,\ell]=0\}$ is the set of Maurer--Cartan solutions, and $\sim$ is the equivalence relation induced by the gauge action of $L^0\otimes\mathfrak{m}_A$ on $\operatorname{MC}_L(A)$.


A given deformation functor $\mathcal{D}\colon\Art_{\KK}\to\Sets$ is said to be controlled by a DG Lie algebra $L$ if there is an isomorphism of deformation functors $\mathcal{D}\cong\Def_L$.

If $L$ and $L'$ are quasi-isomorphic DG Lie algebras, then $\Def_L\cong\Def_{L'}$ (e.g.\ \cite[Corollary~6.6.3]{Manetti:LMDT}), so that we can say that a quasi-isomorphism class of DG Lie algebras controls a given deformation problem.

Finally, let us recall two properties of DG Lie algebras that we need later. If $L\in\operatorname{DGLA}$, we say that
\begin{enumerate}
    \item $L$ is \emph{formal}, if it is quasi-isomorphic to its cohomology;
    \item $L$ is \emph{homotopy abelian}, if it is quasi-isomorphic to an abelian DG Lie algebra (i.e.\ a DG Lie algebra with trivial bracket).
\end{enumerate}
Homotopy abelian DG Lie algebras are formal (but the vice versa is not true in general).

Geometrically, we have the following interpretations. If $L$ is homotopy abelian, then the associated deformation functor $\Def_L$ is unobstructed. If $L$ is formal, then the associated deformation functor $\Def_L$ has only quadratic obstructions, meaning that the natural morphisms
\[  \Def_L\left(\nicefrac{\KK[t]}{(t^k)}\right) \to \Def_L\left(\nicefrac{\KK[t]}{(t^2)}\right) \qquad \mbox{ and } \qquad \Def_L\left(\nicefrac{\KK[t]}{(t^3)}\right) \to \Def_L\left(\nicefrac{\KK[t]}{(t^2)}\right) \]
have the same image for every integer $k\geq3$.

\subsection{Semicosimplicial objects}
We begin by introducing semicosimplicial objects in a very general setting.

\begin{definition}
    For every non negative integer we denote by $[n]\defeq\{0,1,\ldots,n\}$ the set of non negative integers less than or equal to $n$, endowed with the usual order. The category $\overrightarrow{\Delta}$ is defined as follows:
    \begin{itemize}
        \item the objects are $\text{Ob}\left(\overrightarrow{\Delta}\right)=\left\{[n]\,\Bigl\vert\,n\geq0\right\}$,
        \item the morphisms $\Hom_{\,\overrightarrow{\Delta}\,}([n],[m])$ are strictly monotone maps.
    \end{itemize}
\end{definition}

For future purposes we now investigate concrete examples.

\begin{example}[Face maps]
    Let $n\geq1$ be an integer. Then the set of morphisms $\Hom_{\,\overrightarrow{\Delta}\,}([n-1],[n])$ consists of $n+1$ strictly monotone maps, called \emph{face maps}:
    \[ \delta_k\colon [n-1] \to [n] \qquad \delta_k(a)=
    \begin{cases}
        a & \text{ if } a<k\\
        a+1 & \text{ if } a\geq k
    \end{cases} \qquad 0\leq k\leq n \; . \]
\end{example}

With a little abuse of notation we do not emphasize the dependence on $n$ of the face map $\delta_k$.
For future reference we collect here some useful properties of face maps.

\begin{remark}[Properties of face maps]
    It is immediate to verify the following statements.
    \begin{itemize}
        \item Let $m,n\geq0$ be non negative integers. Every morphism $[n]\to[m]$ is the composition of a finite number of face maps, except for the identity.
        \item For every $0\leq h\leq k$ the \emph{semicosimplicial identities} hold:
        \begin{equation}\label{eq.semicosimplicialidentities}
        \delta_h\delta_k=\delta_{k+1}\delta_h \; .
        \end{equation}
        \item Let $n\geq0$ and $k>0$ be two given integers. Every morphism $f\colon [n]\to[n+k]$ admits a unique factorisation as composition of $k$ face maps
        \[ f=\delta_{i_1}\circ\ldots\delta_{i_k} \qquad \text{ such that } \qquad n+k\geq i_1>i_2>\ldots>i_k\geq0 \; . \]
    \end{itemize}
\end{remark}

\begin{definition}
    Let $\mathcal{C}$ be a category. A \emph{semicosimplicial object in} $\mathcal{C}$ is a covariant functor $\alpha\colon \overrightarrow{\Delta}\to\mathcal{C}$. A morphism between semicosimplicial objects in $\mathcal{C}$ is a natural transformation of functors.
\end{definition}

It is often convenient to visualise a semicosimplicial object $\alpha\colon \overrightarrow{\Delta}\to\mathcal{C}$ as a diagram
\[ \xymatrix{ \alpha_0 \ar@<-.5ex>[r] \ar@<.5ex>[r] & \alpha_1 \ar@<-.9ex>[r] \ar[r] \ar@<.9ex>[r] & \alpha_2 \ar@<-.3ex>[r] \ar@<.3ex>[r]\ar@<-1ex>[r] \ar@<1ex>[r] & \ldots } \]
where each $\alpha_n$ is an object in $\mathcal{C}$ (namely, the image of $[n]$ via the functor $\alpha$), while the morphisms between them are the images of the face maps via the functor $\alpha$.

In the same spirit, one can think of a morphism $f\colon \alpha\to \beta$ between semicosimplicial objects in $\mathcal{C}$ as a collection of maps
\[ f_n\colon \alpha_n\to \beta_n \qquad \text{ such that } \qquad \delta_kf_{n-1}=f_n\delta_k \text{ for every } n\geq1 \text{ and every } k\leq n \; . \]

We point out that some additional useful construction can be performed when the category $\mathcal{C}$ is the category of abelian groups.

\begin{definition}[Cochain complex of a semicosimplicial abelian group]\label{def.associatedcochaincomplex}
    Let $\mathcal{C}$ be the category of abelian groups and consider a semicosimplicial object $\alpha\colon \overrightarrow{\Delta}\to\mathcal{C}$. The \emph{associated cochain complex} $\left(C(\alpha),\delta\right)$ is defined as follows:
    \[ C(\alpha)^m=\begin{cases}
        0 & \text{ if } m<0\\
        \alpha_k & \text{ if } m\geq0
    \end{cases} \qquad  \delta^{n-1}\defeq\sum\limits_{k=0}^n(-1)^k\delta_k\colon C(\alpha)^{n-1}=\alpha_{n-1}\to C(\alpha)^n=\alpha_n  \text{ for every } n\geq1 \; . \]
\end{definition}

The cochain complex introduced in Definition~\ref{def.associatedcochaincomplex} is indeed a cochain complex, the condition $\delta^2=0$ being satisfied thanks to the semicosimplicial identities~\eqref{eq.semicosimplicialidentities}.

The following example will be crucial for us. Essentially the \v{C}ech cochain complex of a sheaf of abelian groups with respect to a given open covering can be tought as the cochain complex associated to a semicosimplicial object.

\begin{example}[\v{C}ech cochains]\label{example:semicosimplicial Cech}
    Let $\mathcal{F}$ be a sheaf of abelian groups on a topological space $X$. Given an open covering $\mathcal{U}=\{U_i\}_{i\in I}$ we consider the semicosimplicial abelian group of \v{C}ech cochains:
    \[ \mathcal{F}(\mathcal{U}) \colon \qquad
    \xymatrix{ \prod\limits_{i\in I} \mathcal{F}(U_i) \ar@<-.5ex>[r] \ar@<.5ex>[r] & \prod\limits_{i,j\in I} \mathcal{F}(U_{ij}) \ar@<-.9ex>[r] \ar[r] \ar@<.9ex>[r] & \prod\limits_{i,j,k\in I} \mathcal{F}(U_{ijk}) \ar@<-.3ex>[r] \ar@<.3ex>[r]\ar@<-1ex>[r] \ar@<1ex>[r] & \ldots } \; , \]
    where we denoted as usual $U_{ij}=U_i\cap U_j$ and so on. Given a face map $\delta_k\colon[n-1]\to[n]$, the induced face operator is defined as
    \[ \delta_k\colon \prod\limits_{i_0,\ldots,i_{n-1}\in I} \mathcal{F}(U_{i_0,\ldots,i_{n-1}}) \longrightarrow \prod\limits_{i_0,\ldots,i_{n}\in I} \mathcal{F}(U_{i_0,\ldots,i_n}) \qquad \qquad (\delta_k x)_{i_0,\ldots,i_n} = x_{i_0,\ldots,\hat{i_k},\ldots,i_n} \Bigl\vert_{U_{i_0,\ldots,i_n}} \; . \]
    By definition the associated cochain complex $C(\mathcal{F}(\mathcal{U}))$ is precisely the \v{C}ech complex $C(\mathcal{U},\mathcal{F})$.
\end{example}


\subsection{Deformation functors of semicosimplicial DG Lie algebras}

\begin{definition}
    Let $\operatorname{DGLA}$ be the category of DG Lie algebras. Then a \emph{semicosimplicial DG Lie algebra} $\mathfrak{g}^\Delta$ is a semicosimplicial object in $\operatorname{DGLA}$.
\end{definition}

Our aim is now to recall how to construct a functor of Artin rings associated to a semicosimplicial DG Lie algebra $\mathfrak{g}^\Delta$. We begin by recalling the necessary background involved.

\begin{definition}[\protect{\cite[Definition~2.4]{FIM}}]
    Let $\mathfrak{g}^\Delta$ be a semicosimplicial DG Lie algebra. The functor
    \[ \mathrm{Z}^1_{\operatorname{sc}}(\exp(\mathfrak{g}^{\Delta}))\colon \mathsf{Art}_{\mathbb{K}}\to \mathsf{Set} \]
    is defined, for $A\in\mathsf{Art}_{\mathbb{K}}$, as follows
    \[ \mathrm{Z}^1_{\operatorname{sc}}(\exp(\mathfrak{g}^{\Delta}))(A) = \left\{ (l,m)\in(\mathfrak{g}_0^1\oplus\mathfrak{g}_1^0)\otimes \mathfrak{m}_A \; \left\vert \begin{array}{l}
         dl+\frac{1}{2}[l,l]=0 \\
         \partial_{1,1}l=e^m\ast\partial_{0,1}l\\
         \partial_{0,2}m\bullet-\partial_{1,2}m\bullet\partial_{2,2}m = dn+[\partial_{2,2}\partial_{0,1}l,n]\\
         \qquad \qquad \text{for some } n\in\mathfrak{g}_2^{-1}\otimes\mathfrak{m}_A
    \end{array}\right. \right\} \]
    where $\bullet$ denotes the Baker-Campbell-Hausdorff product.
\end{definition}

Fix $A\in\mathsf{Art}_{\mathbb{K}}$, then there exists an equivalence relation $\sim$ on the set $\mathrm{Z}^1_{\operatorname{sc}}(\exp(\mathfrak{g}^{\Delta})(A)$: $(l_0,m_0)\sim(l_1,m_1)$ if and only if there exist elements $a\in \mathfrak{g}_0^0\otimes \mathfrak{m}_A$ and $b\in\mathfrak{g}_1^{-1}\otimes\mathfrak{m}_A$ such that
\[ \begin{cases}
    e^a\ast l_0=l_1\\
    -m_0\bullet-\partial_{1,1}a\bullet m_1\bullet \partial_{0,1}a = db+[\partial_{0,1}l_0,b] \; .
\end{cases}\]

This is indeed an equivalence relation by~\cite[Remark 2.3]{FIM}.

\begin{definition}[\protect{\cite[Definition~2.4]{FIM}}]
    Let $\mathfrak{g}^\Delta$ be a semicosimplicial DG Lie algebra. The functor
    \[ \oH^1_{\operatorname{sc}}(\exp(\mathfrak{g}^{\Delta}))\colon \mathsf{Art}_{\mathbb{K}}\to \mathsf{Set} \]
    is defined, for $A\in\mathsf{Art}_{\mathbb{K}}$, as $\oH^1_{\operatorname{sc}}(\exp(\mathfrak{g}^{\Delta}))(A) =  \frac{\mathrm{Z}^1_{\operatorname{sc}}(\exp\mathfrak{g}^{\Delta})(A)}{\sim}$.
\end{definition}

The situation we are interested in is the following.

\begin{definition}\label{def.CechscDGLie}
    Let $\mathcal{L}$ be a sheaf of DG Lie algebras on a topological space $X$. For an open cover $\mathcal{U}=\{U_i\}$ of $X$, the \emph{\v{C}ech semicosimplicial DG Lie algebra} is
    \begin{equation}\label{eqn:Cech con L}
     \mathcal{L}^\Delta(\mathcal{U}) \colon \qquad
    \xymatrix{ \prod\limits_{i\in I} \mathcal{L}(U_i) \ar@<-.5ex>[r] \ar@<.5ex>[r] & \prod\limits_{i,j\in I} \mathcal{L}(U_{ij}) \ar@<-.9ex>[r] \ar[r] \ar@<.9ex>[r] & \prod\limits_{i,j,k\in I} \mathcal{L}(U_{ijk}) \ar@<-.3ex>[r] \ar@<.3ex>[r]\ar@<-1ex>[r] \ar@<1ex>[r] & \ldots } \; , 
    \end{equation}
\end{definition}

Therefore there is an associated functor of Artin rings $\oH^1_{\operatorname{sc}}(\exp\mathcal{L}^\Delta(\mathcal{U}))$, which depends on the chosen open cover.

\begin{example}\label{example.localmenteliberosemicosimplicial}
    Let $\mathcal{L}=\mathcal{E}nd(E)$, where $E=(E_i,g_{i,j})$ is an $(\alpha,\mathcal{U})$-twisted locally free sheaf. Let us describe the functor $\oH^1_{\operatorname{sc}}(\exp\mathcal{L}^\Delta(\mathcal{U}))$ relative to an affine open cover $\mathcal{U}$. In this case, since $\mathcal{L}$ is a sheaf of Lie algebras, we get
    \[ \mathrm{Z}^1_{\operatorname{sc}}(\exp(\mathcal{L}^{\Delta}(\mathcal{U}))(A) = \left\{\{m_{ij}\}\in\prod_{i,j\in I}\cE nd_{\cO_X}(E)(U_{ij}) \otimes\mathfrak{m}_A\,\Bigl\vert\, e^{m_{bc}\vert_{U_{abc}}}e^{-m_{ac}\vert_{U_{abc}}}e^{m_{ab}\vert_{U_{abc}}}=\id \right\}\, . \]
    Notice that $\cE nd_{\cO_X}(E)$ is defined by gluing the sheaves $\cE nd_{\cO_{U_i}}(E_i)$. In particular, the set $\cE nd_{\cO_X}(E)(U_{ij})$ is naturally identified with the sets
    \[ \cE nd_{\cO_{U_i}}(E_i)(U_{ij})\; , \quad \cE nd_{\cO_{U_j}}(E_j)(U_{ij}) \quad \text{ and } \quad \mathcal{H}om_{\cO_{U_{ij}}}(E_i\vert_{U_{ij}},E_j\vert_{U_{ij}})(U_{ij}) \]
    by suitable compositions with $g_{ij}$. Notice that in the untwisted case these are exactly the same set being $E_i\vert_{U_{ij}}=E\vert_{U_{ij}}=E_j\vert_{U_{ij}}$.
    Be careful that the cocycle condition heavily depends on this choice in the following way:
    \begin{equation*} 
         \resizebox{\textwidth}{!}{$\begin{aligned}
        \mathrm{Z}^1_{\operatorname{sc}}(\exp(\mathcal{L}^{\Delta}(\mathcal{U}))(A) &= \left\{\{m_{ij}\}\in\prod_{i,j\in I}\cE nd_{\cO_{X}}(E)(U_{ij}) \otimes\mathfrak{m}_A\,\Bigl\vert\, e^{m_{bc}\vert_{U_{abc}}}e^{-m_{ac}\vert_{U_{abc}}}e^{m_{ab}\vert_{U_{abc}}}=\id \right\}\\
        &= \left\{\{m_{ij}\}\in\prod_{i,j\in I}\cH om_{\cO_{U_{ij}}}\left(E_i\vert_{U_{ij}},E_j\vert_{U_{ij}}\right)(U_{ij}) \otimes\mathfrak{m}_A\,\Bigl\vert\, e^{m_{bc}\vert_{U_{abc}}}e^{-m_{ac}\vert_{U_{abc}}}e^{m_{ab}\vert_{U_{abc}}}=\alpha_{abc}\id \right\} \; .
    \end{aligned}$}
    \end{equation*}     
    In this paper, we shall always tacitly assume this convention. 
    
    Moreover, two elements $\{m_{ij}\},\{m_{ij}'\}\in \mathrm{Z}^1_{\operatorname{sc}}(\exp(\mathcal{L}^{\Delta}(\mathcal{U}))(A)$ define the same class in $\oH^1_{\operatorname{sc}}(\exp\mathcal{L}^\Delta(\mathcal{U}))(A)$ if and only if there exists an element
    \[ \{a_i\}\in \prod_{i\in I}\mathcal{E}nd_{\cO_X}(E)(U_i)\otimes\mathfrak{m}_A = \prod_{i\in I}\mathcal{E}nd_{\cO_{U_i}}(E_i)(U_i)\otimes\mathfrak{m}_A \]
    such that $e^{-a_i\vert_{U_{ij}}}\circ e^{m_{ij}}\circ e^{a_j\vert_{U_{ij}}}=e^{m_{ij}'}$ for every $i,j\in I$. 
\end{example}

\begin{remark}
    By \cite[Lemma~2.7]{FIM}, if the open cover $\mathcal{U}$ is acyclic (with respect to $\mathcal{L}$), then there is an isomorphism of functors of Artin rings
    \[ \oH^1_{\operatorname{sc}}(\exp\mathcal{L}^\Delta(\mathcal{U}))=\varinjlim_{\mathcal{V}}\oH^1_{\operatorname{sc}}(\exp\mathcal{L}^\Delta(\mathcal{V})) \; . \]
\end{remark}

\subsection{The Thom--Whitney totalization}\label{section:TW}

Associated to a semicosimplicial DG Lie algebra $\mathfrak{g}^\Delta$ there is an important DG Lie algebra, denoted by $\operatorname{Tot}_{TW}(\mathfrak{g}^{\Delta})$ and called \emph{Thom--Whitney totalization}.

We will not need the rigorous definition of $\operatorname{Tot}_{TW}(\mathfrak{g}^{\Delta})$, but only of its cohomology groups. Anyway, for completeness, we briefly recall its construction. Given a semicosimplicial DG vector space
\[ V^{\Delta} \colon \qquad \xymatrix{ V_0 \ar@<-.5ex>[r] \ar@<.5ex>[r] & V_1 \ar@<-.9ex>[r] \ar[r] \ar@<.9ex>[r] & V_2 \ar@<-.3ex>[r] \ar@<.3ex>[r]\ar@<-1ex>[r] \ar@<1ex>[r] & \ldots } \; , \]
define its Thom-Whitney totalization as
\[ \operatorname{Tot}_{TW}(V^{\Delta}) = \left\{ (x_n)\in\prod_{n\geq0}\Omega_n\otimes V_n\,\Bigl\vert\, (\delta_k^{\ast}\otimes \id)x_n = (\id\otimes\delta_k)x_{n-1} \text{ for every } 0\leq k\leq n \right\} \; , \]
where we denoted by
\[ \Omega_n = \bigoplus_{p=0}^n\Omega^p_n = \frac{\mathbb{K}[t_0,\dots,t_n,dt_0,\dots,dt_n]}{\left(1-\sum t_i,\sum dt_i\right)} \]
the DG algebra of polynomial differential forms on the affine standard $n$-symplex. Of course, the differential on $\operatorname{Tot}_{TW}(V^{\Delta})$ is induced by the one of $\prod_{n\geq0}\Omega_n\otimes V_n$. Moreover, if we endow $V^{\Delta}$ with a Lie bracket, then $\operatorname{Tot}_{TW}(V^{\Delta})$ inherits such additional structure. In fact, this will be the case in our situation.

The importance of the Thom-Whitney totalization of a semicosimplicial DG Lie algebra relies on the following result.

\begin{theorem}[\protect{\cite[Theorem~7.6]{FIM}}]\label{thm:H^1 tramite TW}
    Let $\mathfrak{g}^{\Delta}$ be a semicosimplicial DG Lie algebra. Assume that $\oH^j(\mathfrak{g}_i)=0$ for every $i\geq0$ and $j<0$. Then there is a natural isomorphism of functors of Artin rings
    \[ \oH^1_{\operatorname{sc}}(\exp\mathfrak{g}^\Delta)\cong\Def_{\operatorname{Tot}_{TW}(\mathfrak{g}^\Delta)} \; . \]
\end{theorem}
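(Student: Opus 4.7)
The plan is to construct an explicit natural transformation
\[ \Psi\colon \Def_{\operatorname{Tot}_{TW}(\mathfrak{g}^\Delta)} \longrightarrow \oH^1_{\operatorname{sc}}(\exp\mathfrak{g}^\Delta) \]
and then use the vanishing hypothesis $\oH^j(\mathfrak{g}_i)=0$ for $j<0$ to prove that it is both well defined on gauge equivalence classes and bijective.

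To define $\Psi$, I would first unwind the structure of a Maurer--Cartan element $x \in \operatorname{Tot}_{TW}(\mathfrak{g}^\Delta)^1 \otimes \mathfrak{m}_A$. By definition $x$ is a compatible sequence $(x_n)$ with $x_n \in \bigoplus_p \Omega^p_n \otimes \mathfrak{g}_n^{1-p}$. In particular $x_0 \in \mathfrak{g}_0^1$; the component $x_1$ decomposes as $a_1(t)+b_1(t)\,dt$, with $a_1(t)$ a polynomial path in $\mathfrak{g}_1^1$ and $b_1(t)$ a polynomial map to $\mathfrak{g}_1^0$; and $x_2$ carries a $\mathfrak{g}_2^{-1}$-valued $2$-form part. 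The semicosimplicial compatibility at the vertices of the standard $1$-simplex forces $a_1(0)=\partial_{0,1}x_0$ and $a_1(1)=\partial_{1,1}x_0$. I would then set $l \defeq x_0$ and define $m \in \mathfrak{g}_1^0 \otimes \mathfrak{m}_A$ to be the gauge element produced by the path-ordered exponential of $b_1(t)$ on $[0,1]$, which by construction satisfies $e^m \ast \partial_{0,1}l = \partial_{1,1}l$. The element $n \in \mathfrak{g}_2^{-1} \otimes \mathfrak{m}_A$ appearing in the third defining relation of $Z^1_{\operatorname{sc}}$ is obtained by integrating the $\mathfrak{g}_2^{-1}$-valued component of $x_2$ over the standard $2$-simplex; that the Maurer--Cartan equation $dx + \tfrac12[x,x]=0$ in the totalization implies the three algebraic identities defining $Z^1_{\operatorname{sc}}$ is then a direct, if tedious, computation. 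Gauge equivalences of Maurer--Cartan elements in the totalization produce, via the same integration recipe applied to elements of $\Omega_0 \otimes \mathfrak{g}_0^0 \oplus \Omega_1 \otimes \mathfrak{g}_1^{-1}$, precisely the equivalence relation $\sim$ on $Z^1_{\operatorname{sc}}$, showing that $\Psi$ descends to the quotient.

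The technical core lies in proving that $\Psi$ is a bijection. Given $(l,m) \in Z^1_{\operatorname{sc}}(\exp\mathfrak{g}^\Delta)(A)$, one builds a Maurer--Cartan element $x \in \operatorname{Tot}_{TW}(\mathfrak{g}^\Delta)^1 \otimes \mathfrak{m}_A$ by induction on the semicosimplicial level $n$, extending at each step the data constructed on lower levels to a compatible element of $\Omega_n \otimes \mathfrak{g}_n$ satisfying the truncated Maurer--Cartan equation with boundary conditions prescribed by the previously built pieces. The obstruction to this lifting at level $n$ lives in $\oH^{j}(\mathfrak{g}_n)$ for some $j\leq 0$, which vanishes by hypothesis; a similar argument shows that any two such lifts differ by a gauge transformation coming from $\Omega_{n-1} \otimes \mathfrak{g}_{n-1}^{-1}$, again thanks to the vanishing of negative cohomology. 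I expect the main obstacle to be organising this inductive lift carefully: one must combine the linearisation of the Maurer--Cartan equation with the Baker--Campbell--Hausdorff corrections and the semicosimplicial compatibility conditions, then filter everything by powers of $\mathfrak{m}_A$ so that at each order the problem becomes a concrete linear statement controlled precisely by the acyclicity assumption. Once this inductive construction is in place, bijectivity of $\Psi$ and the naturality of the isomorphism follow formally.
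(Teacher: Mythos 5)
This statement is not proved in the paper at all: it is imported verbatim as \cite[Theorem~7.6]{FIM}, so there is no in-paper argument to compare yours against. Judged on its own, your sketch is a viable outline, and its first half coincides with the standard construction from the literature: the map $\Psi$ is indeed given by evaluating $x_0$ at the vertex, taking the holonomy of the $dt$-component of $x_1$ along the edge to produce $m$ with $e^m\ast\partial_{0,1}l=\partial_{1,1}l$, and integrating the $\mathfrak{g}_2^{-1}$-valued part of $x_2$ over the $2$-simplex to produce $n$. Where you diverge from the usual route is in proving bijectivity: the published argument does not construct the inverse by hand, but invokes the standard isomorphism criterion for deformation functors (bijective on tangent spaces, injective on obstructions), comparing both sides through the Whitney integration quasi-isomorphism $\operatorname{Tot}_{TW}(\mathfrak{g}^\Delta)\to C(\mathfrak{g}^\Delta)$ of Lemma~\ref{lemma:Whitney} together with the computation of the tangent and obstruction spaces of $\oH^1_{\operatorname{sc}}$ recorded in Corollary~\ref{corollary:H^1 is def functor}. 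That abstract route is considerably shorter than your inductive simplex-filling; your version buys an explicit inverse but at the cost of carrying the BCH corrections and the $\mathfrak{m}_A$-adic filtration through every simplicial level.

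Two points in your sketch need tightening. First, you say the obstruction to extending at level $n$ lies in $\oH^{j}(\mathfrak{g}_n)$ ``for some $j\leq 0$'': the hypothesis only kills $j<0$, so you must actually verify that the obstruction sits in degree $1-n\leq -1$ for $n\geq 2$ (it does, but the claim as written is not covered by the assumption). Second, the assertion that the Maurer--Cartan equation implies the triple-overlap identity
\[
\partial_{0,2}m\bullet-\partial_{1,2}m\bullet\partial_{2,2}m = dn+[\partial_{2,2}\partial_{0,1}l,n]
\]
is the genuinely delicate Stokes-type computation on the $2$-simplex; labelling it ``direct, if tedious'' leaves the real content of the forward direction unproved. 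Neither issue is fatal, but as it stands the proposal is an outline of a proof rather than a proof.
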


Theorem~\ref{thm:H^1 tramite TW} is telling us that the functor of Artin rings $\oH^1_{\operatorname{sc}}(\exp\mathfrak{g}^\Delta)$ naturally arises as the deformation functor associated to a DG Lie algebra. Notice that it is very deep since it provides a concrete representative of the homotopy class controlling the deformation problem $\oH^1_{\operatorname{sc}}(\exp\mathfrak{g}^\Delta)$, and this is in general a very difficult task.

Once we have the DG Lie algebra controlling a deformation problem, the first goal is to compute its cohomology in order to obtain useful information concerning first order deformations and obstructions. The following result is due to Whitney and it provides a simpler way to compute the cohomology of $\operatorname{Tot}_{TW}(\mathfrak{g}^\Delta)$.

\begin{lemma}[\protect{\cite[Theorem~7.4.5]{Manetti:LMDT}}]\label{lemma:Whitney}
    Let $V^\Delta$ be a semicosimplicial DG vector space. Then  there exists a quasi-isomorphism of DG vector spaces
    \[ \operatorname{Tot}_{TW}(V^{\Delta})\longrightarrow C(V^\Delta) \]
    where $C(V^\Delta)$ is the associated cochain complex as in Definition~\ref{def.associatedcochaincomplex}.
\end{lemma}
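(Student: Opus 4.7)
The plan is to construct the quasi-isomorphism explicitly via fibrewise integration over the standard simplices, verify the chain-map property using Stokes' theorem, and finally check the cohomological statement by a standard filtration argument.

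First I would recall that each DG algebra $\Omega_n$ of polynomial forms on the standard $n$-simplex carries an integration map $\int_{\Delta^n}\colon \Omega_n^n \to \KK$ of degree $-n$. Using this I would define the candidate map
\[ I\colon \operatorname{Tot}_{TW}(V^{\Delta}) \longrightarrow C(V^{\Delta}), \qquad I\bigl((x_n)_{n\geq 0}\bigr)_n \;=\; \bigl(\textstyle\int_{\Delta^n}\otimes\,\id_{V_n}\bigr)\bigl(x_n^{(n)}\bigr), \]
where $x_n^{(n)} \in \Omega_n^n \otimes V_n$ denotes the component of $x_n$ of top form-degree. Since $C(V^\Delta)^n = V_n$, the map has the right target; and since integration drops form-degree by $n$, the total cohomological degrees match.

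Next I would show that $I$ commutes with differentials. The differential on $\operatorname{Tot}_{TW}(V^\Delta)$ decomposes as $d_{V} + d_{\mathrm{dR}}$, while the differential on $C(V^\Delta)$ is $d_{V} + \sum_{k=0}^{n}(-1)^k\delta_k$. Compatibility with $d_V$ is immediate because projection and integration are $V$-linear. Compatibility with the horizontal pieces is Stokes' theorem applied fibrewise: for the component $x_n^{(n-1)} \in \Omega_n^{n-1}\otimes V_n$ one has
\[ \int_{\Delta^n}\!d\, x_n^{(n-1)} \;=\; \sum_{k=0}^{n}(-1)^k\!\int_{\Delta^{n-1}}\!\delta_k^{*} x_n^{(n-1)}, \]
and the semicosimplicial compatibility $(\delta_k^{*}\otimes \id)x_n = (\id\otimes \delta_k)x_{n-1}$ rewrites the right-hand side as $\sum_k (-1)^k \delta_k\bigl(I((x_m))_{n-1}\bigr)$, as required.

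Finally I would prove that $I$ is a quasi-isomorphism. I would filter both sides by the semicosimplicial degree and compare the induced spectral sequences; on the source, the $E_0$-page computes, in simplicial position $n$, the cohomology of $(\Omega_n \otimes V_n, d_{\mathrm{dR}} \otimes \id)$, which by the Poincaré lemma for polynomial differential forms on $\Delta^n$ is concentrated in top form-degree and is identified with $V_n$ precisely via $\int_{\Delta^n}$; on the target, the same $E_0$-page is $V_n$ in position $n$ by construction. Thus the morphism $I$ induces an isomorphism at the $E_1$-page, and since the filtrations are bounded below and exhaustive, a standard comparison theorem yields the quasi-isomorphism on total cohomology.

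The main obstacle I expect is the sign bookkeeping in the Stokes computation, where the orientations of the faces $\delta_k(\Delta^{n-1})\subset \partial\Delta^n$ must be tracked carefully to reproduce the alternating sum $\sum_k(-1)^k\delta_k$ defining the differential on $C(V^\Delta)$; once the combinatorial model for $\Delta^n$ and the orientation conventions on $\Omega_n$ are fixed, the rest of the argument proceeds formally.
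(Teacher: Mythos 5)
Your integration map $I$ and the Stokes/semicosimplicial-identities verification that it is a chain map are correct, and they coincide with the first half of the proof the paper points to (Manetti, Theorem~7.4.5). The gap is in the spectral-sequence step. The Poincar\'e lemma for polynomial forms says that $H^*(\Omega_n,d_{\mathrm{dR}})$ is $\KK$ concentrated in form-degree $0$ (the constants), \emph{not} in top degree; in particular $\int_{\Delta^n}$ induces the zero map on $H^n(\Omega_n)=0$ for $n>0$. So if the associated graded of your filtration of the source really were $(\Omega_n\otimes V_n,\,d_{\mathrm{dR}}\otimes\id)$ in position $n$, your $E_1$-page would vanish in every position $n>0$ and the comparison with $C(V^\Delta)$ would fail. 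What actually appears as the associated graded of the filtration of $\operatorname{Tot}_{TW}(V^\Delta)$ by simplicial degree is the subcomplex of forms vanishing on all boundary faces, $\bigl(\ker\prod_k\delta_k^*\bigr)\otimes V_n$: this is forced by the equalizer condition $(\delta_k^*\otimes\id)x_n=(\id\otimes\delta_k)x_{n-1}$ once the lower components are killed. The \emph{relative} Poincar\'e lemma for $(\Delta^n,\partial\Delta^n)$ does place the cohomology of that subcomplex in top degree, detected precisely by $\int_{\Delta^n}$, which is the statement you need; as written, you have invoked the wrong complex and the wrong lemma.

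Two further points would still need attention even after that correction. First, to identify $F^n/F^{n+1}$ with the boundary-vanishing forms you must use the extendability of compatible families of polynomial forms from $\partial\Delta^{n+1}$ to $\Delta^{n+1}$ (otherwise the quotient is only a subspace of what you claim). Second, the filtration by simplicial degree is decreasing and unbounded on a product $\prod_{n\ge 0}\Omega_n\otimes V_n$, so convergence is not automatic and requires either a completeness argument or a boundedness hypothesis on $V^\Delta$. The proof the paper cites sidesteps both issues by exhibiting an explicit quasi-inverse $E\colon C(V^\Delta)\to\operatorname{Tot}_{TW}(V^\Delta)$ built from the Whitney elementary forms, with $IE=\id$ and a Dupont-type homotopy $EI\simeq\id$; that route is entirely constructive and needs no hypotheses on $V^\Delta$.
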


\begin{remark}\label{rmk.Whitney}
    When we deal with a semicosimplicial DG Lie algebra $\mathfrak{g}^\Delta$, its cochain complex $C(\mathfrak{g}^\Delta)$ is not endowed with a natural DG Lie algebra structure. This is the reason why Lemma~\ref{lemma:Whitney} is only stated for DG vector spaces, and the main motivation to introduce the Thom--Whitney totalization, which is instead a DG Lie algebra. As outlined before, by a deformation theoretic point of view, one is often mainly interested in the cohomology of the DG Lie algebra; therefore Lemma~\ref{lemma:Whitney} is particularly useful since to deal with the complex $C(\mathfrak{g}^\Delta)$ is easy with respect to the Thom--Whitney totalization.
\end{remark}

\begin{corollary}\label{corollary:H^1 is def functor}
    $\oH^1_{\operatorname{sc}}(\exp\mathfrak{g}^\Delta)$ is a deformation functor with tangent space 
    \[ T^1_{\oH^1_{\operatorname{sc}}(\exp\mathfrak{g}^\Delta)}\cong\oH^1(C(\mathfrak{g}^\Delta)) \]
    and a complete obstruction theory with values in
    \[ T^2_{\oH^1_{\operatorname{sc}}(\exp\mathfrak{g}^\Delta)}\cong\oH^2(C(\mathfrak{g}^\Delta)). \]
\end{corollary}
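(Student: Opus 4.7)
The plan is to chain together Theorem~\ref{thm:H^1 tramite TW} and Lemma~\ref{lemma:Whitney}, both of which are already available. First, I would invoke Theorem~\ref{thm:H^1 tramite TW} (whose non-negativity hypothesis on the cohomologies $\oH^{<0}(\mathfrak{g}_i)$ is implicit here, and will be automatic in all the geometric applications in later sections, since the local pieces are $\RHom$ complexes on acyclic opens) in order to replace the \emph{a priori} rather opaque functor $\oH^1_{\operatorname{sc}}(\exp\mathfrak{g}^\Delta)$ by the classical Maurer--Cartan deformation functor $\Def_{\operatorname{Tot}_{TW}(\mathfrak{g}^\Delta)}$ of a genuine DG Lie algebra.

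Second, as recalled in Section~\ref{section:DGLA}, for any DG Lie algebra $L$ the functor $\Def_L$ is a deformation functor with tangent space canonically isomorphic to $\oH^1(L)$ and with a complete obstruction theory taking values in $\oH^2(L)$; this is the content of \cite[Corollary~6.6.3]{Manetti:LMDT} and related standard results. Applied to $L=\operatorname{Tot}_{TW}(\mathfrak{g}^\Delta)$, this already yields the deformation-functor claim together with the identifications
\[ T^1_{\oH^1_{\operatorname{sc}}(\exp\mathfrak{g}^\Delta)}\cong \oH^1(\operatorname{Tot}_{TW}(\mathfrak{g}^\Delta)), \qquad T^2_{\oH^1_{\operatorname{sc}}(\exp\mathfrak{g}^\Delta)}\cong \oH^2(\operatorname{Tot}_{TW}(\mathfrak{g}^\Delta)). \]

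Third, to rewrite these cohomology groups in terms of the more tractable cochain complex $C(\mathfrak{g}^\Delta)$, I would invoke Lemma~\ref{lemma:Whitney}: the natural Whitney integration map $\operatorname{Tot}_{TW}(\mathfrak{g}^\Delta)\to C(\mathfrak{g}^\Delta)$ is a quasi-isomorphism of DG vector spaces, so it induces canonical isomorphisms $\oH^i(\operatorname{Tot}_{TW}(\mathfrak{g}^\Delta))\simto \oH^i(C(\mathfrak{g}^\Delta))$ for every $i\in\Z$. Specialising to $i=1,2$ and composing with the identifications of the previous step yields exactly the statement of the corollary.

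There is essentially no hard step in the argument: once Theorem~\ref{thm:H^1 tramite TW} and Lemma~\ref{lemma:Whitney} are in hand, the corollary follows by direct combination. The one conceptual caveat, already stressed in Remark~\ref{rmk.Whitney}, is that Lemma~\ref{lemma:Whitney} only provides a quasi-isomorphism at the level of underlying DG vector spaces and not of DG Lie algebras; hence $C(\mathfrak{g}^\Delta)$ itself is not a DG Lie algebra controlling the problem, and $\operatorname{Tot}_{TW}(\mathfrak{g}^\Delta)$ cannot be dispensed with as a \emph{controlling} object. For the purposes of the corollary this is immaterial, since both the tangent space and the obstruction space depend only on the underlying complex.
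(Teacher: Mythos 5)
Your proposal is correct and follows exactly the paper's (very terse) proof, which simply cites Theorem~\ref{thm:H^1 tramite TW} and Lemma~\ref{lemma:Whitney} and combines them as you do. Your additional observations --- that the hypothesis $\oH^{<0}(\mathfrak{g}_i)=0$ of Theorem~\ref{thm:H^1 tramite TW} is left implicit in the statement of the corollary, and that $C(\mathfrak{g}^\Delta)$ carries no DG Lie structure so only the underlying complex is being compared --- are accurate and consistent with Remark~\ref{rmk.Whitney}, but do not change the route.
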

\begin{proof}
    Immediate from Lemma~\ref{lemma:Whitney} and Theorem~\ref{thm:H^1 tramite TW}.
\end{proof}

\begin{example}
    If $\mathfrak{g}^\Delta$ is the semicosimplicial DG Lie algebra $\mathcal{L}^\Delta(\mathcal{U})$ associated to a sheaf $\mathcal{L}$ of DG Lie algebras on $X$, and to an open cover $\mathcal{U}$ (see (\ref{eqn:Cech con L})), then 
    \[ T^1_{\oH^1_{\operatorname{sc}}(\exp\mathfrak{g}^\Delta)}\cong\oH^1(X,\mathcal{L})\quad\mbox{ and }\quad T^2_{\oH^1_{\operatorname{sc}}(\exp\mathfrak{g}^\Delta)}\cong\oH^2(X,\mathcal{L}). \]
\end{example}

\begin{example}[\protect{\cite[Theorem~6.13]{Mea}}]\label{prop:thm bello di francesco}
    Let $X$ be a Noetherian and separated scheme of finite type, and let $F$ be a quasi-coherent sheaf on $X$. Assume that there exists a locally free resolution $E^\bullet$ of $F$. If $\mathcal{L}^\Delta(\mathcal{U})$ is the semicosimplicial DG Lie algebra of $\mathcal{L}=\cE nd^\ast(E^\bullet)$, then $\operatorname{Tot}_{TW}(\mathcal{L}^\Delta(\mathcal{U}))$ is a representative of the homotopy class $\RHom_{\operatorname{D}^b(\operatorname{QCoh}(X))}(F,F)$. Notice that if we deal with a $\alpha$-twisted sheaf $F$ together with a locally free $\alpha$-twisted resolution $E^{\bullet}$, then $\cE nd^{\ast}(E^{\bullet})$ is a sheaf of DG Lie algebras and the construction of $\operatorname{Tot}_{TW}(\mathcal{L}^\Delta(\mathcal{U}))$ produces a DG Lie algebra. Since both the construction and the proof carried out in \cite{Mea} essentially rely on the local behaviour of the sheaf, we have that $\operatorname{Tot}_{TW}(\mathcal{L}^\Delta(\mathcal{U}))$ still represents the homotopy class of $\RHom_{\operatorname{D}^b(\operatorname{QCoh}(X,\alpha))}(F,F)$ as in the classical case.
\end{example}


\section{DG Lie algebra controlling deformations of twisted sheaves}\label{section:DG Lie of twisted}

The goal of this section is to determine the DG Lie algebra controlling the deformations of a twisted sheaf. The main result is the following.

\begin{theorem}\label{thm:main}
    Let $X$ be a smooth and projective variety over a field $\KK$ of characteristic $0$, or a projective complex manifold, and let $\alpha\in\Br(X)$ a Brauer class. The infinitesimal deformations of a coherent $\alpha$-twisted sheaf $F$ on $X$ are controlled by the quasi-isomorphism class of DG Lie algebras $\RHom_{(X,\alpha)}(F,F)$.
\end{theorem}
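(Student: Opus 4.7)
The plan is to realise the deformation functor as the one controlled by a semicosimplicial DG Lie algebra whose Thom--Whitney totalization represents $\RHom_{(X,\alpha)}(F,F)$. Concretely, first I would pick an open cover $\mathcal{U}=\{U_i\}$ of $X$ on which $\alpha$ is represented by a \v{C}ech cocycle $\{\alpha_{ijk}\}$, and choose a bounded \emph{$\alpha$-twisted locally free resolution} $E^\bullet\to F$ (here projectivity enters, granting resolutions of finite length; in the locally free case one may take $E^\bullet=F$ and drop projectivity). Setting $\mathcal{L}=\mathcal{E}nd^*(E^\bullet)$, which is a sheaf of DG Lie algebras (see the end of Section~\ref{section:twisted}), one forms the \v{C}ech semicosimplicial DG Lie algebra $\mathcal{L}^\Delta(\mathcal{U})$. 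Example~\ref{prop:thm bello di francesco} then identifies $\operatorname{Tot}_{TW}(\mathcal{L}^\Delta(\mathcal{U}))$ with a representative of $\RHom_{(X,\alpha)}(F,F)$, and Theorem~\ref{thm:H^1 tramite TW} yields
\[
\Def_{\RHom_{(X,\alpha)}(F,F)}\;\cong\;\oH^1_{\operatorname{sc}}(\exp\mathcal{L}^\Delta(\mathcal{U})).
\]
The remaining and main content of the theorem is therefore the construction of a natural isomorphism $\Def^{(\alpha,\mathcal{U})}_F\cong \oH^1_{\operatorname{sc}}(\exp\mathcal{L}^\Delta(\mathcal{U}))$, which together with Proposition~\ref{prop:non dipende da U} (independence of the cover) and Lemma~\ref{lemma.Defnondipendedalcociclo} (independence of the \v{C}ech representative) gives the result.

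I would prove this isomorphism first in the locally free case, following Example~\ref{example.localmenteliberosemicosimplicial}. Refining $\mathcal{U}$ so that each $F_i$ is free, a deformation $(\{F_{i,A}\},\{h_{i,A}\},\{g_{ij,A}\})$ of $F$ becomes, upon trivialising $F_{i,A}\cong F_i\otimes A$ through the chosen splittings of $h_{i,A}$, the datum of lifted gluings $g_{ij,A}=g_{ij}\circ e^{m_{ij}}$ with $m_{ij}\in \mathcal{H}om(F_i|_{U_{ij}},F_j|_{U_{ij}})\otimes\mathfrak{m}_A$. The twisted cocycle condition (iv) of Definition~\ref{def.twisteddeformation} becomes precisely $e^{m_{bc}}e^{-m_{ac}}e^{m_{ab}}=\alpha_{abc}\,\id$, i.e.\ the defining equation of $\mathrm{Z}^1_{\operatorname{sc}}(\exp\mathcal{L}^\Delta(\mathcal{U}))(A)$ as spelled out in Example~\ref{example.localmenteliberosemicosimplicial}; changing the trivialisations via $e^{a_i}\in\mathcal{E}nd(F_i)(U_i)\otimes\mathfrak{m}_A$ corresponds exactly to the gauge equivalence $\sim$, giving the desired bijection.

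For the general case, I would reduce to the locally free one via the resolution $E^\bullet\to F$. The standard fact that infinitesimal deformations of a coherent sheaf admitting a locally free resolution are in bijection with quasi-isomorphism classes of deformations of the resolution (complex of locally free sheaves) holds in the twisted setting, since the argument is local and the twisting only affects the gluing data. Deformations of the complex $E^\bullet$ on the cover $\mathcal{U}$ consist of pairs: Maurer--Cartan elements $\{\ell_i\in\mathcal{E}nd^1(E^\bullet)(U_i)\otimes\mathfrak{m}_A\}$ (which deform the differentials) together with lifted gluings $\{m_{ij}\}$ satisfying the twisted \v{C}ech/Maurer--Cartan compatibility encoded in $\mathrm{Z}^1_{\operatorname{sc}}(\exp\mathcal{L}^\Delta(\mathcal{U}))(A)$, up to the prescribed equivalence; quasi-isomorphism of deformations corresponds to gauge equivalence. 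This matches the full definition of the set recalled at the beginning of Section~\ref{section:algebra}.

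The hard part will be the careful bookkeeping of the equivalence relation in the general case: one must check that the gauge action of $(\mathfrak{g}_0^0\oplus\mathfrak{g}_1^{-1})\otimes\mathfrak{m}_A$ reproduces simultaneously the freedom in choosing the trivialisations $h_{i,A}$, the higher homotopies between the gluings, and quasi-isomorphisms of locally free deformations of $E^\bullet$, while remaining compatible with the $\alpha$-twisted cocycle condition through the maps $\partial_{k,n}$. Once this dictionary is set up, independence from the resolution $E^\bullet$ follows from the homotopy invariance of the Thom--Whitney totalization under quasi-isomorphisms of sheaves of DG Lie algebras, completing the proof.
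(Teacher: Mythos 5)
Your proposal follows essentially the same route as the paper: reduce to the cover-dependent functor via Proposition~\ref{prop:non dipende da U}, treat the locally free case by trivialising local deformations and matching the twisted cocycle condition with $\mathrm{Z}^1_{\operatorname{sc}}(\exp\mathcal{L}^\Delta(\mathcal{U}))$ as in Example~\ref{example.localmenteliberosemicosimplicial}, then handle the general case by lifting deformations and gluings (up to homotopy) to an $\alpha$-twisted locally free resolution, and finally invoke Theorem~\ref{thm:H^1 tramite TW} together with Example~\ref{prop:thm bello di francesco}. The paper's Theorems~\ref{thm:locally free} and~\ref{thm:general} carry out exactly the bookkeeping of the gauge/homotopy equivalences that you correctly identify as the main remaining work.
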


\begin{remark}
    If $\alpha=1$ is trivial, then the result is proved in \cite{FIM}.
\end{remark}

The differences in treating the algebraic and analytic situation are minimal. We will give full proofs in the algebraic setting and relegate to remarks the due changes in the analytic setting.


We divide the proof in two cases: the locally free case and the general case. The idea behind the two cases is the same, and in fact the general case is reduced to the locally free one by passing through a locally free resolution. We decided to keep the two cases separate both to improve the exposition and to make clear where the hypotheses are needed.

The following corollary can also be found in \cite{Lieblich:TwistedSheaves}.
\begin{corollary}
    Let $X$ be a smooth and projective variety over a field $\KK$ of characteristic $0$, or a projective complex manifold, and let $\alpha\in\Br(X)$ a Brauer class. If $F$ is a coherent $\alpha$-twisted sheaf on $X$ then:
    \begin{enumerate}
        \item first order deformations of $F$ are paramentrised by the space $\Ext^1_{(X,\alpha)}(F,F)$;
        \item obstructions of $F$ are contained in the space $\Ext^2_{(X,\alpha)}(F,F)$.
    \end{enumerate}
\end{corollary}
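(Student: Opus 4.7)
The strategy is to reduce the proof to the semicosimplicial machinery of Section~\ref{section:algebra}, following the blueprint of \cite{FIM} and being careful to track the twist. Fix an affine open cover $\mU=\{U_i\}$ on which $\alpha$ is represented by a \v{C}ech cocycle $\{\alpha_{ijk}\}$. In the locally free case I would take the untwisted sheaf of DG Lie algebras $\mathcal{L}=\cE nd(F)$ (recall that $\mathcal{E}nd(F)$ is a regular sheaf, as noted after the list of compatibilities with $\otimes$, $\cH om$, $f^*$), while in the general case I would pick a locally free $\alpha$-twisted resolution $E^\bullet\to F$ (which exists under the projectivity hypotheses) and take $\mathcal{L}=\cE nd^*(E^\bullet)$. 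Either way, the \v{C}ech semicosimplicial DG Lie algebra $\mathcal{L}^\Delta(\mU)$ of Definition~\ref{def.CechscDGLie} is well defined, and by Example~\ref{prop:thm bello di francesco} its Thom--Whitney totalization is a representative of the quasi-isomorphism class $\RHom_{(X,\alpha)}(F,F)$.

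The heart of the argument is then the construction of a natural isomorphism of functors of Artin rings
\[ \Def_F^{(\alpha,\mU)}\;\cong\;\oH^1_{\operatorname{sc}}(\exp\mathcal{L}^\Delta(\mU)), \]
after which Theorem~\ref{thm:H^1 tramite TW} provides the identification with $\Def_{\operatorname{Tot}_{TW}(\mathcal{L}^\Delta(\mU))}$ and Example~\ref{prop:thm bello di francesco} concludes. In the locally free case the dictionary is essentially the one outlined in Example~\ref{example.localmenteliberosemicosimplicial}: on a sufficiently fine affine cover each local piece $F_i$ is free and therefore its deformation $F_{i,A}$ is, up to isomorphism, the trivial extension $F_i\otimes A$. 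A deformation is then entirely encoded by the transition data $g_{ij,A}$; writing $g_{ij,A}=\exp(m_{ij})\circ g_{ij}$ for $m_{ij}\in\cE nd(F)(U_{ij})\otimes\mfm_A$ and conjugating the twisted cocycle condition $g_{ij,A}\circ g_{jk,A}\circ g_{ki,A}=(\alpha_{ijk}\otimes 1_A)\id$ through the original $g_{ij}$'s yields exactly the relation $e^{m_{bc}}e^{-m_{ac}}e^{m_{ab}}=\alpha_{abc}\id$ appearing in Example~\ref{example.localmenteliberosemicosimplicial}. Isomorphisms of deformations translate into the gauge equivalence $\{e^{-a_i|_{U_{ij}}}\circ e^{m_{ij}}\circ e^{a_j|_{U_{ij}}}=e^{m_{ij}'}\}$ also described there. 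Both maps are manifestly natural in $A\in\Art_\KK$ and inverse to each other.

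For the general case I would reduce to the locally free case via the resolution $E^\bullet\to F$: deformations of $F$ correspond (after choosing a small enough cover) to deformations of the complex $E^\bullet$ up to quasi-isomorphism, and deformations of $E^\bullet$ are governed term-by-term by the locally free argument just described, now with Maurer--Cartan elements of total degree $1$ in $\cE nd^*(E^\bullet)^\Delta(\mU)$. The flatness of the deformation and the quasi-isomorphism with the trivial resolution at the central fibre let one transport deformations back and forth, and the cover-independence already established in Proposition~\ref{prop:non dipende da U} guarantees that refining to a common cover causes no ambiguity.

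The main obstacle will be the faithful verification of the twisted cocycle bookkeeping, namely checking that under the identification of $\cE nd(F)(U_{ij})$ with $\cH om_{\cO_{U_{ij}}}(F_i|_{U_{ij}},F_j|_{U_{ij}})$ via composition with $g_{ij}$, the factor $\alpha_{abc}$ appears on the right-hand side of the cocycle condition rather than being absorbed into the $g$'s; this is where the proof genuinely differs from \cite{FIM}, and it is the point at which the Brauer class re-enters the semicosimplicial picture that a priori sees only the untwisted sheaf $\cE nd(F)$. The remaining delicate step in the general case is checking that the correspondence between deformations of $F$ and deformations of $E^\bullet$ descends to isomorphism classes, for which the arguments of \cite{FIM} adapt since they depend only on the local structure of the sheaves involved, but one must ensure that the twisted resolution can be chosen functorially enough to make this adaptation unambiguous.
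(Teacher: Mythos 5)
Your proposal is, in substance, a sketch of the proof of Theorem~\ref{thm:main} itself (which the paper carries out as Theorem~\ref{thm:locally free} in the locally free case and Theorem~\ref{thm:general} in general), whereas this corollary is meant to be an immediate consequence of that theorem: once one knows that the deformations of $F$ are controlled by $L=\RHom_{(X,\alpha)}(F,F)$, the two statements follow from the general DG Lie formalism recalled in Section~\ref{section:DGLA}, namely that $\Def_L$ has tangent space $\oH^1(L)$ and a complete obstruction theory with values in $\oH^2(L)$, combined with the identification $\oH^i(\RHom_{(X,\alpha)}(F,F))\cong\Ext^i_{(X,\alpha)}(F,F)$. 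Your longer route is viable and faithful to the paper's actual arguments (the dictionary of Example~\ref{example.localmenteliberosemicosimplicial}, the reduction to a twisted locally free resolution, the cover independence of Proposition~\ref{prop:non dipende da U}), but as written it stops one step short of the claim: after establishing
\[ \Def_F^{(\alpha,\mU)}\cong\oH^1_{\operatorname{sc}}(\exp\mathcal{L}^\Delta(\mU))\cong\Def_{\operatorname{Tot}_{TW}(\mathcal{L}^\Delta(\mU))}, \]
you never identify the tangent and obstruction spaces of this functor with $\Ext^1$ and $\Ext^2$. That identification is exactly Corollary~\ref{corollary:H^1 is def functor}, which rests on Whitney's quasi-isomorphism $\operatorname{Tot}_{TW}(V^\Delta)\to C(V^\Delta)$ of Lemma~\ref{lemma:Whitney}: the cochain complex of the \v{C}ech semicosimplicial object computes the hypercohomology of $\cE nd^\ast(E^\bullet)$ over the affine (or Stein) cover, i.e.\ $\Ext^\ast_{(X,\alpha)}(F,F)$. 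Without this final step, items (1) and (2) do not follow from what you wrote; with it, your argument closes, though it amounts to re-proving the theorem rather than quoting it.

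One smaller caution on your sketch of the general case: the lifted gluing maps $g_{ij,A}^{E^\bullet}$ on a resolution satisfy the twisted cocycle condition only \emph{up to cochain homotopy} (liftings along a resolution are unique only up to homotopy), which is precisely why the semicosimplicial cocycle functor carries the correction term $dn+[\partial_{2,2}\partial_{0,1}l,n]$; your phrase ``deformations of $F$ correspond to deformations of the complex $E^\bullet$'' glosses over this, and it is the point where the naive term-by-term reduction to the locally free case would fail if taken literally.
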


\subsection{The locally free case}
In this case we can drop the projectivity assumptions. In the complex analytic setting, we will simply work with complex manifolds.

First of all, let us observe that any infinitesimal deformation of a locally free ($\alpha$-twisted) sheaf is locally trivial, so that the deformation data is completely determined by the gluing data.

\begin{proposition}\label{prop.deformationsaretrivialAFFINE}
    Let $\,\mathcal{U}=\{U_i\}_{i\in I}$ be an open affine cover of $X$ and let $E=\left(\{E_{i}\},\{g_{ij}\}\right)$ be a locally free $(\alpha,\mathcal{U})$-twisted sheaf.
    
    Then every deformation $\left(\{E_{i,A}\},\{h_{i,A}\},\{g_{ij,A}\}\right)$ of $E$ over $A\in\mathsf{Art}_{\KK}$ is isomorphic to a deformation of the form $\left(\{E_i\otimes A\},\{\pi_{i,A}\},\{\tilde{g}_{ij,A}\}\right)$.
    \begin{proof}
        Recall that every deformation of the locally free sheaf $E_i$ on the affine $U_i$ is trivial (see e.g.~\cite[add precise reference]{Manetti:LMDT}). Now, since $h_{i,A}\colon E_{i,A}\to E_i$ is such a deformation, for every $i\in I$ there exists an isomorphism $\varphi_{i,A}\colon E_i\otimes A \to E_{i,A}$ such that
        \[ \xymatrix{ E_i\otimes A\ar@{->}[rr]^{\varphi_{i,A}}\ar@{->}[dr]_{\pi_{i,A}} & & E_{i,A}\ar@{->}[dl]^{h_i}\\
         & E_i & } \]
         is a commutative diagram of $\mathcal{O}_X\otimes A$-modules. Now define $\tilde{g}_{ij,A}$ as the composition
         \[ \tilde{g}_{ij,A} \colon E_i\vert_{U_{ij}}\otimes A \xrightarrow{\varphi_{i,A}\vert_{U_{ij}\otimes A}} E_{i,A}\vert_{U_{ij}\otimes A} \xrightarrow{g_{ij,A}} E_{j,A}\vert_{U_{ij}\otimes A} \xrightarrow{\varphi_{j,A}^{-1}\vert_{U_{ij}\otimes A}} E_j\vert_{U_{ij}}\otimes A \; . \]
         Notice that
         \begin{itemize}
             \item $\tilde{g}_{ij,A} \circ \tilde{g}_{kj,A}^{-1} \circ \tilde{g}_{kj,A} = \varphi_i\circ\left( g_{ij,A} \circ g_{kj,A}^{-1} \circ g_{kj,A} \right)\circ \varphi_i^{-1} = (\alpha_{ijk}\otimes 1_A)\id$ for every $i,j,k\in I$,
             \item the reduction $\tilde{g}_{ij,A}\otimes_A\KK=g_{ij}$ for every $i,j\in I$.
         \end{itemize}
         Hence $\left(\{E_i\otimes A\},\{\pi_{i,A}\},\{\tilde{g}_{ij,A}\}\right)$ is a deformation of $E$ over $A$ isomorphic to $\left(\{E_{i,A}\},\{h_{i,A}\},\{g_{ij,A}\}\right)$.
    \end{proof}
\end{proposition}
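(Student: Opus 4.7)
The plan is to reduce the statement to the standard fact that, on an affine scheme, every infinitesimal deformation of a locally free sheaf is trivial, and then to transport the local trivializations across the glueings while preserving the twisted cocycle condition. I emphasize that the only subtle point (which must be checked, but which turns out to be automatic) is compatibility with the twisting scalars $\alpha_{ijk}$.

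More concretely, I would proceed as follows. First, for each $i \in I$, since $U_i$ is affine and $E_i$ is locally free, the deformation $(E_{i,A}, h_{i,A})$ of $E_i$ over $A$ is trivial in the untwisted sense; hence there exists an isomorphism $\varphi_{i,A} \colon E_i \otimes A \xrightarrow{\sim} E_{i,A}$ of $\mathcal{O}_{U_i} \otimes A$-modules such that $h_{i,A} \circ \varphi_{i,A} = \pi_{i,A}$ (equivalently, the reduction $\bar{\varphi}_{i,A}$ equals $\mathrm{id}_{E_i}$ modulo the triangle induced by $h_{i,A}$ and $\pi_{i,A}$). Second, I would define the new glueings by conjugation:
\[
\tilde{g}_{ij,A} \;:=\; \varphi_{j,A}^{-1}\big|_{U_{ij}} \circ g_{ij,A} \circ \varphi_{i,A}\big|_{U_{ij}} \colon E_i|_{U_{ij}} \otimes A \longrightarrow E_j|_{U_{ij}} \otimes A.
\]

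Third, I would verify the three axioms of Definition~\ref{def.twisteddeformation} for the new triple $(\{E_i \otimes A\}, \{\pi_{i,A}\}, \{\tilde{g}_{ij,A}\})$. Flatness is immediate since $E_i \otimes A$ is obviously flat over $A$. The compatibility $\pi_{j,A} \circ \tilde{g}_{ij,A} = g_{ij} \circ \pi_{i,A}$ on reductions follows from the definition of $\tilde{g}_{ij,A}$ together with the triangles $h_{i,A} \circ \varphi_{i,A} = \pi_{i,A}$. The crucial point is the twisted cocycle condition, which reads
\[
\tilde{g}_{jk,A} \circ \tilde{g}_{ij,A} \;=\; (\alpha_{ijk} \otimes 1_A)\,\tilde{g}_{ik,A}
\]
on $U_{ijk} \otimes A$; expanding both sides in terms of the $\varphi_{\bullet,A}$ and the $g_{\bullet,A}$, the inner $\varphi$'s cancel, and since $\alpha_{ijk}$ is a scalar (a section of $\mathcal{O}_X^*$) it commutes with $\varphi_{i,A}$ and $\varphi_{k,A}^{-1}$, so the condition reduces to the cocycle condition already satisfied by $\{g_{ij,A}\}$.

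Finally, the collection $\{\varphi_{i,A}\}_{i \in I}$ is by construction an isomorphism of $(\alpha,\mathcal{U})$-deformations from $(\{E_i \otimes A\}, \{\pi_{i,A}\}, \{\tilde{g}_{ij,A}\})$ to $(\{E_{i,A}\}, \{h_{i,A}\}, \{g_{ij,A}\})$: the glueing square commutes by the definition of $\tilde{g}_{ij,A}$, and the trivialization square commutes by the triangle $h_{i,A} \circ \varphi_{i,A} = \pi_{i,A}$. I do not expect any serious obstacle: the only conceptual input is the triviality of deformations of locally free sheaves on affines, and the only thing that could have gone wrong, namely the interaction of the $\varphi_{i,A}$'s with the twisting, is automatic because the twisting is encoded by central scalars.
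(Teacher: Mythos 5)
Your proposal is correct and follows essentially the same route as the paper: trivialize each local deformation on the affine $U_i$ via an isomorphism $\varphi_{i,A}$ compatible with the projections, conjugate the gluing maps by these trivializations, and observe that the twisted cocycle condition is preserved because the inner $\varphi$'s cancel and the $\alpha_{ijk}$ are central scalars. If anything, your explicit verification of the cocycle identity is slightly more careful than the paper's.
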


\begin{remark}
    If $X$ is a complex manifold, then the same holds up to take a Stein open cover of $X$ instead. In fact, every deformation of a locally free sheaf on a Stein manifold is trivial (e.g.\ \cite[Lemma~4.2.4]{Manetti:LMDT}).
\end{remark}

\begin{theorem}\label{thm:locally free}
    Let $\,\mathcal{U}=\{U_i\}_{i\in I}$ be an open affine cover of $X$ and let $E=\left(\{E_{i}\},\{g_{ij}\}\right)$ be an $(\alpha,\mathcal{U})$-twisted sheaf. Assume that $E_i$ is locally free for every $i\in I$. There is an isomorphism of deformation functors
    \[ \Def_E^{(\alpha,\mathcal{U})}\stackrel{\cong}{\longrightarrow} \oH^1_{\operatorname{sc}}(\exp(\mathfrak{g}^{\Delta})),  \]
    where $\mathfrak{g}^{\Delta}$ is the \v{C}ech semicosimplicial Lie algebra associated to the sheaf of DG Lie algebras $\cE nd(E)$ (see Definition~\ref{def.CechscDGLie}).
    \begin{proof}
        Fix $A\in \Art_{\KK}$ and consider a deformation $\left(\{E_{i,A}\},\{h_{i,A}\},\{g_{ij,A}\}\right)$ of $E$ over $A$. By Proposition~\ref{prop.deformationsaretrivialAFFINE}, there exists an isomorphism of deformations
        \[ \varphi\colon \left(\{E_{i,A}\},\{h_{i,A}\},\{g_{ij,A}\}\right) \to \left(\{E_i\otimes A\},\{\pi_{i,A}\},\{\tilde{g}_{ij,A}\}\right) \; . \]
        where $\pi_i\colon E_i\otimes A\to E_i$ is the natural projection and 
        \[ \tilde{g}_{ij,A}=\varphi_{j,A}^{-1}\vert_{U_{ij}\otimes A} \circ g_{ij,A}\circ \varphi_{i,A}\vert_{U_{ij}\otimes A}\in\mathcal{H}om_{\cO_{U_{ij}}}\left(E_i\vert_{U_{ij}},E_j\vert_{U_{ij}}\right)(U_{ij})\otimes\mathfrak{m}_A \]
        for some isomorphisms $\varphi_i\colon E_i \otimes A\xrightarrow{\cong} E_{i,A}$ satisfying  $\pi_i = h_{i,A}\circ \varphi_i$ for every  $i\in I$.

        It is straightforward to check that the $\tilde{g}_{ij,A}$'s satisfy the cocycle identity
        \[ \tilde{g}_{ij,A}\vert_{U_{ijk}\otimes A}\circ \tilde{g}_{jk,A}\vert_{U_{ijk}\otimes A}\circ \tilde{g}_{ki,A}\vert_{U_{ijk}\otimes A}=\alpha_{ijk}\id_{U_{ijk}\otimes A} \; . \]
        It follows by Example~\ref{example.localmenteliberosemicosimplicial} that $\{\tilde{g}_{ij,A}\}\in \mathrm{Z}^1_{\operatorname{sc}}(\exp(\mathfrak{g}^{\Delta}))(A)$.

        Let us now take another choice of isomorphisms
        \[ \varphi_i'\colon E_i \otimes A\xrightarrow{\cong} E_{i,A} \qquad \text{ satisfying } \pi_i = h_{i,A}\circ \varphi'_i \qquad \text{ for every } i\in I \; . \]
        We can then define
        \[ \sigma_i = \varphi_i^{-1}\circ\varphi_i' \colon E_i\otimes A\xrightarrow{\cong} E_i\otimes A \; , \]
        so that
        \[ \begin{aligned}
            \tilde{g}_{ij,A}' = & \,(\varphi_{j,A}')^{-1}\vert_{U_{ij}\otimes A} \circ g_{ij,A}\circ \varphi_{i,A}'\vert_{U_{ij}\otimes A} = (\varphi_{j,A}\circ \sigma_j)^{-1}\vert_{U_{ij}\otimes A} \circ g_{ij,A}\circ (\varphi_{i,A}\circ\sigma_i)\vert_{U_{ij}\otimes A} \\
            = & \, (\sigma_j^{-1}\circ \varphi_{j,A}^{-1})\vert_{U_{ij}\otimes A} \circ g_{ij,A}\circ (\varphi_{i,A}\circ\sigma_i)\vert_{U_{ij}\otimes A} = \sigma_j^{-1}\vert_{U_{ij}\otimes A} \circ \tilde{g}_{ij,A}\circ \sigma_i\vert_{U_{ij}\otimes A}\, .
        \end{aligned} \]
        Again by Example~\ref{example.localmenteliberosemicosimplicial}, the above relation proves that the map
        \[ \xi_A\colon \Def_E^{(\alpha,\mathcal{U})}(A)\longrightarrow \mathrm{H}^1_{\operatorname{sc}}(\exp(\mathfrak{g}^{\Delta}))(A),\qquad \left(\{E_{i,A}\},\{h_{i,A}\},\{g_{ij,A}\}\right)\mapsto \prod_{ij}\tilde{g}_{ij,A}. \]
        is well defined.
        Moreover, by \cite[Lemma~4.2.2]{Manetti:LMDT}, the equivalence relation defined on $\mathrm{Z}^1_{\operatorname{sc}}(\exp(\mathfrak{g}^{\Delta}))(A)$ is the same as the equivalence relation of isomorphisms of the form $\sigma_i\in \End_{\cO_{U_i}}(E_i)\otimes\mathfrak{m}_A$, thus concluding that $\xi_A$ is injective. On the other hand $\xi_A$ is clearly surjective, so that it is a bijection.

        Finally, since the construction above is functorial in $A$, it defines the required isomorphism of functors 
        \[ \xi\colon \Def_E^{(\alpha,\mathcal{U})}\longrightarrow \oH^1_{\operatorname{sc}}(\exp(\mathfrak{g}^{\Delta})) \; . \]
    \end{proof}
\end{theorem}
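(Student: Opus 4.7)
The plan is to construct an explicit bijection on $A$-points for every $A\in\Art_\KK$, and then verify functoriality in $A$. The starting point is Proposition~\ref{prop.deformationsaretrivialAFFINE}, which allows us to replace any $(\alpha,\mathcal{U})$-deformation of $E$ by one whose underlying local sheaves are the trivial deformations $E_i\otimes A$; the whole deformation datum then lives in the gluing morphisms. Given a deformation $(\{E_{i,A}\},\{h_{i,A}\},\{g_{ij,A}\})$ and a choice of trivializations $\varphi_{i,A}\colon E_i\otimes A\xrightarrow{\cong} E_{i,A}$ compatible with the structure maps to $E_i$, I would associate the collection
\[ \tilde g_{ij,A}\defeq\varphi_{j,A}^{-1}|_{U_{ij}\otimes A}\circ g_{ij,A}\circ\varphi_{i,A}|_{U_{ij}\otimes A}, \]
viewed as an element of $\cH om_{\cO_{U_{ij}}}(E_i|_{U_{ij}},E_j|_{U_{ij}})(U_{ij})\otimes\mfm_A$.

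The second step is to verify that $\{\tilde g_{ij,A}\}$ lies in $\mathrm{Z}^1_{\operatorname{sc}}(\exp(\mathfrak{g}^\Delta))(A)$. Under the identifications of Example~\ref{example.localmenteliberosemicosimplicial}, this reduces to checking the $\alpha$-twisted cocycle relation on triple intersections, which follows at once from condition (4) of Definition~\ref{def.twisteddeformation} for the original $g_{ij,A}$: the trivializations $\varphi_{i,A}$ appearing in consecutive positions cancel pairwise, leaving the scalar $\alpha_{ijk}\otimes 1_A$. This is the only place where the Brauer class enters the argument in a substantive way, and it shows that the twisted multiplicative cocycle condition on the gluing data translates verbatim into the twisted Maurer--Cartan condition for $\mathfrak{g}^\Delta$.

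The third step is to prove well-definedness and bijectivity. If $\{\varphi'_{i,A}\}$ is a different choice of trivializations, then $\sigma_i\defeq\varphi_{i,A}^{-1}\circ\varphi'_{i,A}$ lies in $\exp(\End_{\cO_{U_i}}(E_i)\otimes\mfm_A)$ and a direct computation gives $\tilde g'_{ij,A}=\sigma_j^{-1}|_{U_{ij}}\circ\tilde g_{ij,A}\circ\sigma_i|_{U_{ij}}$, which is precisely the gauge-equivalence relation defining $\oH^1_{\operatorname{sc}}$. An isomorphism of deformations fits into the same framework after absorbing it into the choice of trivializations. Surjectivity is immediate, since any Maurer--Cartan element can be used to glue the sheaves $E_i\otimes A$ together into an honest $(\alpha,\mathcal{U})$-deformation, and injectivity follows by identifying the automorphisms of the trivial local deformations with the gauge group, as in \cite[Lemma~4.2.2]{Manetti:LMDT}. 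Functoriality in $A$ is then automatic, since pullback along $f\colon A\to B$ commutes with all the operations used in the construction.

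The main obstacle I anticipate is purely notational: one must be careful with the identifications between $\cE nd(E)(U_{ij})$ and the three natural sets listed in Example~\ref{example.localmenteliberosemicosimplicial}, because in the twisted case passing between them involves composition with the (non-trivial) transition maps $g_{ij}$, and it is exactly this bookkeeping that converts the Lie-algebraic cocycle identity $e^{m_{bc}}e^{-m_{ac}}e^{m_{ab}}=\id$ into the twisted version with right-hand side $\alpha_{abc}\id$. Once this convention is fixed at the outset, the verifications above are essentially parallel to those of the untwisted case treated in \cite{FIM}, and no further input beyond Proposition~\ref{prop.deformationsaretrivialAFFINE} and the explicit description in Example~\ref{example.localmenteliberosemicosimplicial} is required.
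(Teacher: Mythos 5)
Your proposal is correct and follows essentially the same route as the paper's proof: trivialize the local deformations via Proposition~\ref{prop.deformationsaretrivialAFFINE}, extract the gluing data $\tilde g_{ij,A}$, verify the twisted cocycle condition and gauge-equivalence under change of trivialization using the identifications of Example~\ref{example.localmenteliberosemicosimplicial}, and conclude bijectivity via \cite[Lemma~4.2.2]{Manetti:LMDT} together with the evident surjectivity and functoriality in $A$. The notational caveat you flag about converting $e^{m_{bc}}e^{-m_{ac}}e^{m_{ab}}=\id$ into the $\alpha_{abc}\id$ version is exactly the convention the paper fixes in that Example.
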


\begin{remark}
    The argument goes through unchanged for complex manifolds, up to choose a Stein open cover.
\end{remark}

\begin{corollary}
    Let $E$ be a $\alpha$-twisted locally free sheaf on a smooth variety $X$. Then the infinitesimal deformations of $E$ are controlled by the (homotopy class of the) DG Lie algebra $\RHom_{(X,\alpha)}(E,E)$.
    \begin{proof}
    First of all, by Proposition~\ref{prop:Def alpha = Def alpha U} it follows that for any choice of open cover $\mathcal{U}$ there is an isomorphism $\Def_E^{\alpha}\cong\Def_E^{(\alpha,\mathcal{U})}$. We can then choose an affine cover $\mathcal{U}$, so that by Theorem~\ref{thm:locally free} we have an isomorphism $\Def_E^{(\alpha,\mathcal{U})}\cong\oH^1_{\operatorname{sc}}(\exp(\mathfrak{g}^{\Delta}))$. Finally, by Theorem~\ref{thm:H^1 tramite TW} and Example~\ref{prop:thm bello di francesco} we get that $\oH^1_{\operatorname{sc}}(\exp(\mathfrak{g}^{\Delta}))$ is controlled by the (homotopy class of the) DG Lie algebra $\RHom_{(X,\alpha)}(E,E)$.
    \end{proof}
\end{corollary}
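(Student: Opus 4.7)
The plan is to chain together the results that have already been set up in the paper so that every step is a direct application. To begin, I would forget the global deformation functor for a moment and move to a presentation adapted to an open cover: by Proposition~\ref{prop:Def alpha = Def alpha U}, any choice of cover $\mathcal{U}$ on which $\alpha$ is represented yields a natural isomorphism $\Def_E^\alpha\cong\Def_E^{(\alpha,\mathcal{U})}$, so I am free to specialise to an affine (or Stein, in the analytic case) cover.

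Once an affine cover is fixed, Theorem~\ref{thm:locally free} applies directly, because a locally free $\alpha$-twisted sheaf has trivial deformations on each affine piece, and so the cocycle-type description of $\Def_E^{(\alpha,\mathcal{U})}$ is available. This identifies $\Def_E^{(\alpha,\mathcal{U})}$ with $\oH^1_{\operatorname{sc}}(\exp(\mathfrak{g}^\Delta))$, where $\mathfrak{g}^\Delta$ is the \v{C}ech semicosimplicial DG Lie algebra attached to the sheaf $\cE nd(E)$. Since $\cE nd(E)$ is a sheaf of ordinary Lie algebras, it is concentrated in degree zero and the hypothesis $\oH^j(\mathfrak{g}_i)=0$ for $j<0$ required by Theorem~\ref{thm:H^1 tramite TW} is automatic. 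Applying that theorem I obtain an isomorphism
\[
\oH^1_{\operatorname{sc}}(\exp(\mathfrak{g}^\Delta))\cong \Def_{\operatorname{Tot}_{TW}(\mathfrak{g}^\Delta)}.
\]

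The last step is to recognise the Thom--Whitney totalization as a model for the derived endomorphism DG Lie algebra. Since $E$ is already $\alpha$-twisted locally free, it serves as its own locally free resolution, and Example~\ref{prop:thm bello di francesco} (which, as pointed out there, extends without change to the twisted setting) identifies $\operatorname{Tot}_{TW}(\mathfrak{g}^\Delta)$ with a representative of the homotopy class of $\RHom_{(X,\alpha)}(E,E)$. Concatenating the four isomorphisms gives the desired control statement.

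The part that deserves the most care is making sure that the intermediate identifications are genuinely functorial and that the hypotheses of Theorem~\ref{thm:H^1 tramite TW} and of Example~\ref{prop:thm bello di francesco} are actually met in the twisted context. The former is a matter of cohomological concentration, handled by the affineness (or Steinness) of $\mathcal{U}$ together with the fact that $\mathcal{L}=\cE nd(E)$ lives in degree zero; the latter requires checking that the construction of $\operatorname{Tot}_{TW}(\mathcal{L}^\Delta(\mathcal{U}))$ is local and so insensitive to the twist, which is precisely the observation recorded in Example~\ref{prop:thm bello di francesco}. I do not foresee any serious obstacle beyond bookkeeping, since the genuinely new geometric input has already been absorbed into Theorem~\ref{thm:locally free}.
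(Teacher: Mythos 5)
Your proposal is correct and follows exactly the same chain of reductions as the paper's own proof: Proposition~\ref{prop:Def alpha = Def alpha U} to pass to an affine cover, Theorem~\ref{thm:locally free} to identify the functor with $\oH^1_{\operatorname{sc}}(\exp(\mathfrak{g}^{\Delta}))$, and then Theorem~\ref{thm:H^1 tramite TW} together with Example~\ref{prop:thm bello di francesco} to recognise the Thom--Whitney totalization as a model for $\RHom_{(X,\alpha)}(E,E)$. Your added remarks on verifying the vanishing hypothesis of Theorem~\ref{thm:H^1 tramite TW} and on $E$ serving as its own resolution are accurate and only make explicit what the paper leaves implicit.
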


\begin{remark}\label{remark:A 0 stella di E}
    Let $X$ be a complex manifold and $E$ a $\alpha$-twisted locally free sheaf. With an abuse of notation we keep denoting by $E$ the associated complex $\alpha$-twisted vector bundle, and we denote by $\debar_E$ the holomorphic structure. This simply means that there exists an open cover $\{U_i\}$ of $X$ and $\debar_E$ is a collection of holomorphic structures $\debar_{E|_{U_i}}$ (see Section~\ref{section:twisted connections}).
    There is then a well-defined DG Lie algebra
    \[ L:=\left(A^{0,\ast}_X(\cE nd(E)),[\debar_E,-],[-,-]\right) \]
    where 
    \begin{itemize}
        \item $A^{0,\ast}_X(\cE nd(E))$ is the space of $(0,\ast)$-forms with coefficients in the vector bundle $\cE nd(E)$;
        \item $[\debar_E,-]$ is the induced holomorphic structure on $\cE nd(E)$, thus being a differential;
        \item $[-,-]$ is the Lie bracket induced by composition of sections of $\cE nd(E)$.
    \end{itemize}
    Then $L$ is a representative of $\RHom_{(X,\alpha)}(E,E)$.
\end{remark}


\subsection{The general case}

As already remarked, the general case passes through the existence of a finite locally free resolution. Let us then start by recalling the following remark due to C\u{a}ld\u{a}raru.
\begin{lemma}[\protect{\cite[Lemma~2.1.4]{PhD:Caldararu}}]
    Let $X$ be a smooth and projective variety, $\,\mathcal{U}=\{U_i\}_{i\in I}$ an open cover of $X$ and $F=\left(\{F_{i}\},\{g_{ij}\}\right)$ an $(\alpha,\mathcal{U})$-twisted sheaf. Then there exists a finite $(\alpha,\mathcal{U})$-twisted locally free resolution of $F$, i.e.
    \[ \cdots\to E^{-1}\to E^{0}\to F \to 0. \]
    \begin{proof}
        By \cite[Lemma~2.1.4]{PhD:Caldararu} every coherent $(\alpha,\mathcal{U})$-twisted sheaf is a quotient of a locally free $(\alpha,\mathcal{U})$-twisted sheaf, from which the claim follows.
    \end{proof}
\end{lemma}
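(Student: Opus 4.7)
The plan is to combine the cited lemma of Căldăraru (existence of surjections from locally free twisted sheaves) with the classical finite-length argument from the untwisted setting.

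First, I would iterate the surjection statement. By \cite[Lemma~2.1.4]{PhD:Caldararu}, there exists a locally free $(\alpha,\mathcal{U})$-twisted sheaf $E^0$ together with a surjection $E^0 \twoheadrightarrow F$. Let $K^0 \defeq \ker(E^0\to F)$; this is again a coherent $(\alpha,\mathcal{U})$-twisted sheaf, since the kernel sheaves $K^0_i$ on $U_i$ are coherent and the gluing data $\{g_{ij}\}$ restrict. Applying the cited lemma to $K^0$ and iterating, we obtain an (a priori infinite) exact complex
\[ \cdots \to E^{-2} \to E^{-1} \to E^0 \to F \to 0 \]
of coherent $(\alpha,\mathcal{U})$-twisted sheaves, with each $E^{-j}$ locally free.

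Next, I would truncate after $n\defeq\dim X$ steps and argue that the kernel $K^{-n}\defeq\ker(E^{-n}\to E^{-n+1})$ is automatically locally free, yielding the finite resolution
\[ 0 \to K^{-n} \to E^{-n} \to \cdots \to E^0 \to F \to 0. \]
Being locally free is a local property, and on each $U_i\in\mathcal{U}$ the twisted sheaf $K^{-n}$ reduces to the untwisted coherent sheaf $K^{-n}_i$, obtained as the kernel of a map between locally free coherent sheaves on the smooth scheme $U_i$. Since $X$ is smooth of dimension $n$, every coherent sheaf on $U_i$ has projective dimension at most $n$ (Auslander--Buchsbaum or the syzygy theorem, applied stalkwise to the Noetherian regular local rings $\cO_{X,x}$), hence the $n$-th syzygy $K^{-n}_i$ is locally free. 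The gluing data $g_{ij}$ on $K^{-n}$ are induced by those on $E^{-n}$, so $K^{-n}$ is a locally free $(\alpha,\mathcal{U})$-twisted sheaf.

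The main (and essentially only) obstacle is making sure the syzygy argument transfers correctly to the twisted setting. But the crucial point is that twisting only affects the gluing cocycle: on each $U_i$ the datum $K^{-n}_i$ is an honest coherent $\cO_{U_i}$-module, and local freeness is checked on stalks $\cO_{X,x}$ which are unaffected by $\alpha$. Thus the untwisted syzygy theorem applies verbatim, and rebuilding the twisted sheaf from the locally free pieces $K^{-n}_i$ and the restricted transition maps finishes the proof.
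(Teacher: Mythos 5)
Your proposal is correct and follows exactly the route the paper intends: it invokes C\u{a}ld\u{a}raru's Lemma~2.1.4 to get surjections from locally free twisted sheaves, iterates on the kernels, and truncates using the fact that on a smooth variety the high syzygies of a coherent sheaf are locally free (checked locally, where the twist is invisible). The paper compresses all of this into ``from which the claim follows,'' so you have simply supplied the standard details it leaves implicit.
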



\begin{theorem}\label{thm:general}
    Let $X$ be a smooth and projective variety, let $\,\mathcal{U}=\{U_i\}_{i\in I}$ be an open affine cover of $X$ and let $F=\left(\{F_{i}\},\{g_{ij}\}\right)$ be an $(\alpha,\mathcal{U})$-twisted coherent sheaf. Choose a $(\alpha,\mathcal{U})$-twisted locally free resolution $E^{\bullet}\to F$. Then there is an isomorphism of deformation functors
    \[ \Def_F^{(\alpha,\mathcal{U})}\stackrel{\cong}{\longrightarrow} \oH^1_{\operatorname{sc}}(\exp(\mathfrak{g}^{\Delta})),  \]
    where $\mathfrak{g}^{\Delta}$ is the \v{C}ech semicosimplicial Lie algebra associated to the sheaf of DG Lie algebras $\mathcal{E}nd^{\ast}(E^{\bullet})$ (see Definition~\ref{def.CechscDGLie}).
    \begin{proof}
        Let $E^{\bullet}\to F$ be an $(\alpha,\mathcal{U})$-twisted locally free resolution. In particular, $(E_i^\bullet,d_i)$ is a locally free resolution of the sheaf $F_i$ on each $U_i$, for every $i\in I$. Now consider any deformation $F_A=(\{F_{i,A}\},\{h_{i,A}\},\{g_{ij,A}\})$ of $F$ over $A\in\mathsf{Art}_{\mathbb{K}}$. Then $h_{i,A}\colon F_{i,A}\to F_i$ is a classical infinitesimal deformation over $\Spec(A)$, so that it lifts to a locally free resolution of sheaves $\left(E_i^{\bullet}\otimes A, d_i+\ell_i\right)$ over $U_i\times\Spec(A)$, for some choice of $\,\ell_i\in End^1(E_i^\bullet)\otimes \mathfrak{m}_A$, in such a way that the diagram of $\cO_{U_i}\otimes A$-modules
        \[ \xymatrix{ \left(E_i^{\bullet}\otimes A, d_i+\ell_i\right) \ar@{->}[r]^-{\epsilon_{i,A}} \ar@{->}[d]_{\pi_{i,A}} & F_{i,A}\ar@{->}[d]^{h_{i,A}}\\
        \left(E_i^{\bullet}, d_i\right) \ar@{->}[r]_-{\epsilon_{i}} & F_{i} } \]
        is commutative, for every $i\in I$. Notice that we applied Proposition~\ref{prop.deformationsaretrivialAFFINE} to state that, for every $k\in\Z_{\leq0}$ and every $i\in I$, the deformation of $E_i^k$ can be assumed of the form $E_i^{k}\otimes A\xrightarrow{\pi_{i,A}}E_i^k$.
        
        We claim that such local resolutions are glued together on double intersections  by isomorphisms of complexes $g_{ij,A}^{E^{\bullet}}$ lifting $g_{ij,A}^{F}$ and $g_{ij}^{E^{\bullet}}$; this means that we have commutative diagrams (of complexes) of $\cO_{U_{ij}}\otimes A$-modules:
        \begin{equation}\label{eq.liftings}
            \xymatrix{ \left(E_i^{\bullet}\vert_{U_{ij}}\otimes A, d_i+\ell_i\right) \ar@{->}[r]^-{\epsilon_{A,i}}\ar@{.>}[d]_{g_{ij,A}^{E^{\bullet}}} & F_{i,A}\vert_{U_{ij}}\ar@{->}[d]^-{g_{ij,A}^{F}}\\
            \left(E_j^{\bullet}\vert_{U_{ij}}\otimes A, d_j+\ell_j\right) \ar@{->}[r]_-{\epsilon_{A,j}} & F_{j,A}\vert_{U_{ij}} } \qquad \qquad 
            \xymatrix{ \left(E_i^{\bullet}\vert_{U_{ij}}\otimes A, d_i+\ell_i\right) \ar@{->}[r]\ar@{.>}[d]_{g_{ij,A}^{E^{\bullet}}} & \left(E^{\bullet}_{i}\vert_{U_{ij}},d_i\right) \ar@{->}[d]^-{g_{ij}^{E^{\bullet}}}\\
            \left(E_j^{\bullet}\vert_{U_{ij}}\otimes A, d_j+\ell_j\right) \ar@{->}[r] & \left(E^{\bullet}_{j}\vert_{U_{ij}},d_j\right) }
        \end{equation}
        To prove the claim above, consider the following morphisms of complexes
        \begin{equation*} 
         \resizebox{\textwidth}{!}{$\cH om^{\ast}_{\cO_{U_{ij}}\otimes A}\left(E_i^{\bullet}\vert_{U_{ij}}\otimes A,F_{i,A}\vert_{U_{ij}}\right) \xrightarrow[g_{ij,A}^F\circ\, -]{\cong} \cH om^{\ast}_{\cO_{U_{ij}}\otimes A}\left(E_i^{\bullet}\vert_{U_{ij}}\otimes A,F_{j,A}\vert_{U_{ij}}\right) \xleftarrow[\epsilon_{A,j}\circ\, -]{} \cH om^{\ast}_{\cO_{U_{ij}}\otimes A}\left(E_i^{\bullet}\vert_{U_{ij}}\otimes A,E_j^{\bullet}\vert_{U_{ij}}\otimes A\right) \; ; $} 
         \end{equation*}
         passing to cohomology we have isomorphisms (see e.g.~\cite[Proposition 6.2]{Mea} applied to the affine case $X=U_{ij}$). In particular, this proves that the lifting $g_{ij,A}^{E^{\bullet}}$ exists and it is uniquely defined up to cochain homotopy.
        More precisely, we actually proved that the left square in~\eqref{eq.liftings} is commutative, but we still need to check the other one. Nevertheless, taking the reduction modulo the maximal ideal $\mathfrak{m}_A$, we see that $g_{ij,A}^{E^{\bullet}}\otimes_A\KK$ lifts $g_{ij}^F$. Since also  $g_{ij}^{E^{\bullet}}$ is a lifting of $g_{ij}^F$ (and, as we saw above, liftings are unique up to homotopy), we conclude that there exists $\eta\in\cH om^{-1}_{O_{U_{ij}}}(E_i^{\bullet}\vert_{U_{ij}},E_j^{\bullet}\vert_{U_{ij}})$ such that
        \[ g_{ij,A}^{E^{\bullet}}\otimes_A\KK = g_{ij}^{E^{\bullet}} + d_j\circ\eta+\eta\circ d_i \; . \]
        Hence, if necessary, we can replace $g_{ij,A}^{E^{\bullet}}$ by
        \[ g_{ij,A}^{E^{\bullet}} + \underbrace{(d_j+\ell_j)\circ(\eta\otimes\id_{A})+(\eta\otimes\id_{A})\circ (d_i+\ell_i)}_{\text{cochain homotopy}} \]
        in order to get the required lifting.
        
        Let us now look at the cocycle condition, that reads as
        \[ g_{ij,A}\vert_{U_{ijk}\otimes A}\circ g_{jk,A}\vert_{U_{ijk}\otimes A}\circ g_{ki,A}\vert_{U_{ijk}\otimes A}=\alpha_{ijk}\id_{E_i^{\bullet}\vert_{U_{ijk}\otimes A}} + \text{homotopy} \; , \]
        since the left hand side lifts $\alpha_{ijk}\id_{F_{i,A}\vert_{U_{ijk}\otimes A}}$, and liftings are unique up to homotopy.

        Let us now recall the definition of the functor $\mathrm{Z}^1_{\operatorname{sc}}(\exp(\mathcal{L}^{\Delta}(\mathcal{U}))(A)$ in the case $\mathcal{L}=\cE nd^{\ast}(E^{\bullet})$:
        an element
        \[ (\{l_i\},\{m_{ij}\})\in\prod_{i\in I}\cE nd^{1}_{\cO_{U_i}}(E_i^{\bullet})(U_i)\oplus\prod_{i,j\in I}\cH om^{0}_{\cO_{U_{ij}}}\left(E_i^{\bullet}\vert_{U_{ij}},E_j^{\bullet}\vert_{U_{ij}}\right)(U_{ij}) \otimes\mathfrak{m}_A \]
        lies in $\mathrm{Z}^1_{\operatorname{sc}}(\exp(\mathcal{L}^{\Delta}(\mathcal{U}))(A)$ if and only if the following conditions are satisfied:
        \begin{enumerate}
            \item $d_il_i+\frac{1}{2}[l_i,l_i]=0$, that is equivalent to $(d_i+l_i)^2=0$ for every $i\in I$,
            \item $\partial_{1,1}l=e^m\ast\partial_{0,1}l$, that is equivalent to $l_j=e^{m_{ij}}\ast l_i$ for every $i,j\in I$,
            \item $\exists n\in\mathfrak{g}_2^{-1}\otimes\mathfrak{m}_A $ such that $\partial_{0,2}m\bullet-\partial_{1,2}m\bullet\partial_{2,2}m = dn+[\partial_{2,2}\partial_{0,1}l,n]$, that is equivalent to
            \[ e^{m_{bc}\vert_{U_{abc}}}e^{-m_{ac}\vert_{U_{abc}}}e^{m_{ab}\vert_{U_{abc}}}=\alpha_{abc}\id+\text{homotopy} \; . \]
        \end{enumerate}
        
        Now, for every $A\in\Art_{\KK}$, we can explicitly describe the required morphism
        \[ \begin{aligned}
            \xi\colon \Def_F^{(\alpha,\mathcal{U})}(A) &\longrightarrow \oH^1_{\operatorname{sc}}(\exp(\mathfrak{g}^{\Delta}))(A)\\
            \left(\{F_{i,A}\},\{h_{i,A}\},\left\{g^{F}_{ij,A}\right\}\right) & \longmapsto \left(\{\ell_i\},\left\{\log\left(g^{E^{\bullet}}_{ij,A}-\id\right)\right\}\right) \; ,
        \end{aligned} \]
        and it remains to be shown that $\xi$ is well defined and bijective. 
        
        By construction, the pair $\left(\{\ell_i\},\left\{\log\left(g^{E^{\bullet}}_{ij,A}-\id\right)\right\}\right)$ satisfies the conditions (1), (2), (3). Moreover, given another deformation $\left(\{F_{i,A}'\},\{h_{i,A}'\},\{g_{ij,A}'\}\right)$ of $F$ over $A$, isomorphic to $\left(\{F_{i,A}\},\{h_{i,A}\},\{g_{ij,A}\}\right)$ via the collection of isomorphisms $\{\varphi_{i,A}\colon F_{i,A}\to F_{i,A}'\}$, it is sufficient to consider the local resolutions $\epsilon_{i,A}'=\varphi_{i,A}\circ\epsilon_{i,A}$ of $F_{i,A}'$ to conclude that $\chi\left(\{F_{i,A}\},\{h_{i,A}\},\{g_{ij,A}\}\right)=\chi\left(\{F_{i,A}'\},\{h_{i,A}'\},\{g_{ij,A}'\}\right)$.
        
        We need to show that different choices of $\{\ell_i\}$ and $\{g^{E^\bullet}_{ij,A}\}$ give the same element in $\oH^1_{\operatorname{sc}}(\exp(\mathfrak{g}^{\Delta}))(A)$. To this aim, we begin by recalling the definition of the equivalence relation on $\oH^1_{\operatorname{sc}}(\exp(\mathfrak{g}^{\Delta}))(A)$ in our case: 
        $(l,m)\sim(l',m')$ if and only if there exist elements
        \[ a\in \prod\limits_{i\in I}\cE nd^{0}_{\cO_{U_i}}(E_i^{\bullet})(U_i)\otimes \mathfrak{m}_A \quad \text{ and } \quad b\in\prod\limits_{i,j\in I}\cH om^{-1}_{\cO_{U_{ij}}}\left(E_i^{\bullet}\vert_{U_{ij}},E_j^{\bullet}\vert_{U_{ij}}\right)(U_{ij}) \otimes\mathfrak{m}_A \]
        such that:
        \begin{enumerate}[(i)]
            \item $e^a\ast l=l'$, that is equivalent to $(d_i+l'_i)\circ e^{a_i} = e^{a_i}\circ (d_i+l_i)$;
            \item $-m\bullet-\partial_{1,1}a\bullet m'\bullet \partial_{0,1}a = db+[\partial_{0,1}l,b]$, that is equivalent to require that the isomorphisms of complexes
            \[ e^{m_{ij}'}\circ e^{a_i}\colon E_i^{\bullet}\vert_{U_{ij}}\otimes A\to E_j^{\bullet}\vert_{U_{ij}}\otimes A \quad \text{ and } \quad e^{a_j}\circ e^{m_{ij}}\colon E_i^{\bullet}\vert_{U_{ij}}\otimes A\to E_j^{\bullet}\vert_{U_{ij}}\otimes A \]
            are homotopic to each other for every $i,j\in I$.
        \end{enumerate}
        Now notice that in our notation we have $e^{m_{ij}}=g_{ij,A}^{E^\bullet}$, so that the morphism $\xi$ is well defined and injective. Finally, for the surjectivity of $\xi$, it is sufficient to take the $0$-th cohomology of both the complexes $\left(E_i^{\bullet}\vert_{U_{ij}}\otimes A, d_i+\ell_i\right)$ and the corresponding gluing data $g^{E^\bullet}_{ij,A}$.
    \end{proof}
\end{theorem}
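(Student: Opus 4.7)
The plan is to mimic the argument used for locally free sheaves in Theorem~\ref{thm:locally free}, playing the role of the single sheaf $E$ with the locally free resolution $E^\bullet\to F$. The essential new feature is that objects and equations that held strictly in the locally free case now only hold up to chain homotopy; this is exactly the extra flexibility encoded in the DG structure on $\mathcal{E}nd^\ast(E^\bullet)$ and in the quotient defining $\oH^1_{\operatorname{sc}}(\exp(\mathfrak{g}^\Delta))$.

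Given a deformation $F_A=(\{F_{i,A}\},\{h_{i,A}\},\{g^F_{ij,A}\})$ over $A\in\Art_\KK$, standard deformation theory on the affine $U_i$ lifts $F_{i,A}$ to a locally free resolution of the form $(E_i^\bullet\otimes A,\,d_i+\ell_i)\xrightarrow{\epsilon_{i,A}}F_{i,A}$, for some $\ell_i\in\cE nd^1(E_i^\bullet)\otimes\mathfrak{m}_A$, where by Proposition~\ref{prop.deformationsaretrivialAFFINE} the underlying graded modules can be taken to be $E_i^k\otimes A$. On each double intersection, $g^F_{ij,A}$ lifts through the resolutions to a morphism of complexes $g^{E^\bullet}_{ij,A}\colon E_i^\bullet|_{U_{ij}}\otimes A\to E_j^\bullet|_{U_{ij}}\otimes A$ above $g^{E^\bullet}_{ij}$; such a lift exists and is unique up to chain homotopy by the standard $\cH om$-complex argument (the two natural maps from $\cH om^\ast(E_i^\bullet\otimes A, F_{j,A})$ induced by $g^F_{ij,A}\circ-$ and $\epsilon_{j,A}\circ-$ are quasi-isomorphisms). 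The twisted triple cocycle for $g^F_{\cdot,A}$ then lifts to
\[
g^{E^\bullet}_{ij,A}\circ g^{E^\bullet}_{jk,A}\circ g^{E^\bullet}_{ki,A}\;=\;\alpha_{ijk}\cdot\id\;+\;\text{homotopy},
\]
which is exactly condition (3) in the description of $\mathrm{Z}^1_{\operatorname{sc}}(\exp(\mathfrak{g}^\Delta))(A)$ recalled in the text. I would then define
\[
\xi_A(F_A)\,:=\,\bigl(\{\ell_i\},\,\{\log(g^{E^\bullet}_{ij,A})\}\bigr)\in\oH^1_{\operatorname{sc}}(\exp(\mathfrak{g}^\Delta))(A).
\]

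The main technical point is to check that $\xi_A$ is well-defined on isomorphism classes and injective. For this I would unpack the equivalence relation on $\mathrm{Z}^1_{\operatorname{sc}}(\exp(\mathfrak{g}^\Delta))(A)$ and translate each of its two clauses into a statement about liftings of complexes: the degree-$0$ component $a=\{a_i\}$ of the gauge corresponds to a change of chain self-equivalence of $(E_i^\bullet\otimes A, d_i+\ell_i)$ lying over the identity of $F_{i,A}$, while the degree-$(-1)$ component $b=\{b_{ij}\}$ corresponds precisely to the homotopy ambiguity in the lift $g^{E^\bullet}_{ij,A}$ of $g^F_{ij,A}$. Two deformations of $F$ are isomorphic if and only if, upon any common choice of resolution data, the associated gluing isomorphisms of complexes differ by a collection of self-equivalences over the identity of $F_{i,A}$ (modulo homotopy), which is precisely the equivalence relation defining $\oH^1_{\operatorname{sc}}$.

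For surjectivity, from a representative $(\{\ell_i\},\{m_{ij}\})\in\mathrm{Z}^1_{\operatorname{sc}}(\exp(\mathfrak{g}^\Delta))(A)$ I would form the local complexes $(E_i^\bullet\otimes A, d_i+\ell_i)$, whose zeroth cohomology sheaves $F_{i,A}$ are flat deformations of $F_i$ over $A$, and observe that the chain isomorphisms $e^{m_{ij}}$ descend to isomorphisms $g^F_{ij,A}$ on $\oH^0$ that satisfy the \emph{strict} twisted cocycle identity since the degree $-1$ homotopies are killed after passing to cohomology. This reconstructs a class in $\Def_F^{(\alpha,\mathcal{U})}(A)$ mapping back to the given one, and the construction is manifestly functorial in $A$. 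The hardest part will be the careful bookkeeping of homotopies in the previous paragraph: showing that the natural transformation $\xi$ identifies the homotopy coherence data on liftings of $E^\bullet$ with the formal gauge data defining $\mathrm{Z}^1_{\operatorname{sc}}$ and its equivalence relation. Once this is established, functoriality in $A$, together with Example~\ref{example.localmenteliberosemicosimplicial} applied termwise, yields the claimed isomorphism of functors.
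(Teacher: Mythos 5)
Your proposal follows essentially the same route as the paper's proof: trivialise the local resolutions on the affine cover via Proposition~\ref{prop.deformationsaretrivialAFFINE}, lift the gluing maps $g^F_{ij,A}$ through the $\cH om$-complex quasi-isomorphisms to chain maps unique up to homotopy, observe that the twisted cocycle then holds up to homotopy, and match the resulting data and its ambiguities with the explicit description of $\mathrm{Z}^1_{\operatorname{sc}}$ and its gauge equivalence, recovering surjectivity by passing to $\oH^0$. The only differences are notational (your $\log(g^{E^\bullet}_{ij,A})$ versus the paper's convention, which amounts to the same identification $e^{m_{ij}}=g^{E^\bullet}_{ij,A}$), so the argument is correct as outlined.
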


\begin{remark}
    The same statement holds for projective complex manifolds, up to work with a Stein open cover.
\end{remark}

\begin{corollary}
    Let $F$ be an $\alpha$-twisted coherent sheaf on a smooth projective variety $X$, or a projective complex manifold. Then the infinitesimal deformations of $F$ are controlled by the (homotopy class of the) DG Lie algebra $\RHom_{(X,\alpha)}(F,F)$.
    \begin{proof}
        By Proposition~\ref{prop:Def alpha = Def alpha U} we can choose an affine (or Stein) open cover without changing the isomorphism class of the deformation problem. Then the claim follows at once from Theorem~\ref{thm:general}, Theorem~\ref{thm:H^1 tramite TW} and Example~\ref{prop:thm bello di francesco}.
    \end{proof}
\end{corollary}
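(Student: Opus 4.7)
The plan is to chain together the three main building blocks already established in the paper, following the same pattern used to derive the corresponding statement in the locally free case. First, I reduce the deformation problem to an affine (or Stein, in the complex-analytic setting) open cover $\mathcal{U}$ of $X$: by Proposition~\ref{prop:Def alpha = Def alpha U} there is a canonical identification $\Def_F^\alpha \cong \Def_F^{(\alpha,\mathcal{U})}$ for any such cover. This choice is essential, as it both ensures the existence of an $(\alpha,\mathcal{U})$-twisted locally free resolution $E^\bullet \to F$ (via C\u{a}ld\u{a}raru's lemma invoked just before Theorem~\ref{thm:general}) and guarantees the acyclicity needed to apply the Thom--Whitney machinery at the end.

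Next, fixing such a resolution $E^\bullet \to F$, Theorem~\ref{thm:general} gives the first crucial identification
\[
\Def_F^{(\alpha,\mathcal{U})} \;\cong\; \oH^1_{\operatorname{sc}}\bigl(\exp(\mathfrak{g}^\Delta)\bigr),
\]
where $\mathfrak{g}^\Delta$ is the \v{C}ech semicosimplicial DG Lie algebra associated to the sheaf of DG Lie algebras $\cE nd^\ast(E^\bullet)$. This converts the original geometric deformation problem into a purely (semi)cosimplicial object, \emph{a priori} dependent on the chosen resolution but ultimately --- as the final step will make clear --- depending only on the quasi-isomorphism class attached to $F$.

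The final step is to interpret $\oH^1_{\operatorname{sc}}(\exp(\mathfrak{g}^\Delta))$ itself as the deformation functor of a bona fide DG Lie algebra, and to identify that DG Lie algebra with a representative of $\RHom_{(X,\alpha)}(F,F)$. Theorem~\ref{thm:H^1 tramite TW} supplies the first part via a natural isomorphism $\oH^1_{\operatorname{sc}}(\exp(\mathfrak{g}^\Delta)) \cong \Def_{\operatorname{Tot}_{TW}(\mathfrak{g}^\Delta)}$, while Example~\ref{prop:thm bello di francesco} identifies $\operatorname{Tot}_{TW}(\mathfrak{g}^\Delta)$ with a representative of the quasi-isomorphism class of $\RHom_{(X,\alpha)}(F,F)$; crucially, the construction there is local in nature and therefore transports verbatim from the untwisted to the $\alpha$-twisted case. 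Composing the three isomorphisms yields the claim. The main subtlety to verify is the hypothesis $\oH^j(\mathfrak{g}_i)=0$ for $j<0$ required by Theorem~\ref{thm:H^1 tramite TW}, which follows from the affine (or Stein) choice of cover together with the fact that $\cE nd^\ast(E^\bullet)$ is a complex of locally free sheaves; this is precisely where the reduction in the first step pays off, and where a naive extension from the locally free to the general case could otherwise fail.
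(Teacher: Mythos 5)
Your proposal is correct and follows exactly the paper's own proof: reduce to an affine (or Stein) cover via Proposition~\ref{prop:Def alpha = Def alpha U}, apply Theorem~\ref{thm:general} to identify the deformation functor with $\oH^1_{\operatorname{sc}}(\exp(\mathfrak{g}^\Delta))$, and conclude via Theorem~\ref{thm:H^1 tramite TW} and Example~\ref{prop:thm bello di francesco}. Your additional verification of the vanishing hypothesis $\oH^j(\mathfrak{g}_i)=0$ for $j<0$ is a welcome bit of care that the paper leaves implicit.
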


\begin{remark}\label{rmk:A 0 stella di F}
    If $X$ is a complex manifold, following the notations introduced in Remark~\ref{remark:A 0 stella di E}, we have that $\RHom_{(X,\alpha)}(F,F)$ is represented by the DG Lie algebra
    \[ L:=\left(A^{0,\ast}_X(\cE nd^\bullet(E^\bullet)),[\debar_{E^\bullet},-],[-,-]\right). \]
\end{remark}


\section{Formality for twisted sheaves on minimal surfaces of Kodaira dimension 0}\label{section:surfaces}

Let $S$ be a projective minimal surface with Kodaira dimension $0$ and $F$ an $\alpha$-twisted sheaf on it. Recall that $S$ is either a K3 surface, an abelian surface, an Enriques surface or a bielliptic surface. In this section we want to prove the following result, which is a generalisation of \cite[Theorem~1.1]{BMM:Formality}.

\begin{theorem}\label{thm:kodaira 0}
Let $S$ be a smooth projective minimal surface of Kodaira dimension $0$, and let $H$ an ample line bundle on it. If $F$ an $\alpha$-twisted sheaf that is $H$-polystable, then $\RHom_{(S,\alpha)}(F,F)$ is a formal DG Lie algebra.
\end{theorem}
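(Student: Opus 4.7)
The plan is to reduce the theorem to the untwisted formality result of \cite{BMM:Formality} by exploiting a simple but crucial observation: although $F$ is $\alpha$-twisted, the endomorphism sheaf $\cE nd(F)$ (and more generally the endomorphism complex of any $\alpha$-twisted locally free resolution) is \emph{untwisted}, so the formality machinery developed for coherent sheaves on K\"ahler surfaces of Kodaira dimension $0$ applies to the DG Lie algebra representing $\RHom_{(S,\alpha)}(F,F)$ with essentially no modification.

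First I would fix a locally free $\alpha$-twisted resolution $E^\bullet \to F$ as in Section~\ref{section:DG Lie of twisted}. By Remark~\ref{rmk:A 0 stella di F} the homotopy class $\RHom_{(S,\alpha)}(F,F)$ is then represented by the Dolbeault DG Lie algebra
\[
L \;=\; \bigl(A^{0,*}_S(\cE nd^\bullet(E^\bullet)),\ [\debar_{E^\bullet},-],\ [-,-]\bigr),
\]
whose underlying sheaf of graded algebras $\cE nd^\bullet(E^\bullet)$ is untwisted. The $H$-polystability of $F$, together with the Kobayashi--Hitchin correspondence in the twisted setting, provides a projectively Hermite--Einstein structure on $E^\bullet$; this descends to an honest Hermite--Einstein metric on the untwisted graded algebra $\cE nd^\bullet(E^\bullet)$, which is precisely the analytic input required by the arguments of \cite{BMM:Formality}.

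For $S$ a K3 or an abelian surface, one has a nowhere-vanishing holomorphic two-form $\sigma \in \oH^0(S,\Omega^2_S)$. Following \cite{BMM:Formality} I would then build the formality quasi-isomorphism by contracting with $\sigma$ and appealing to the K\"ahler identities and the $\partial\debar$-lemma; since the contraction takes place entirely inside the untwisted DG Lie algebra $L$, every step carries over verbatim to the twisted setting. For $S$ an Enriques or bielliptic surface, the canonical bundle is torsion: let $\pi\colon \widetilde{S} \to S$ denote the minimal \'etale Galois cover on which $K_{\widetilde{S}}$ is trivial (a K3 double cover in the Enriques case, an abelian cover in the bielliptic case). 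Then $\pi^*F$ is a $\pi^*\alpha$-twisted, $\pi^*H$-polystable sheaf, and by the previous case $\RHom_{(\widetilde{S},\pi^*\alpha)}(\pi^*F, \pi^*F)$ is formal. I would conclude by descending formality along $\pi$: the DG Lie algebra on $S$ is quasi-isomorphic to the Galois invariants of the one on $\widetilde{S}$, and since taking invariants under a finite group action commutes with cohomology over a field of characteristic zero, formality is preserved.

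The step I expect to be most delicate is the reduction to a Hermite--Einstein situation when the twisted sheaf $F$ is not locally free; one either has to argue first on polystable locally free approximations and then extend the formality conclusion along the resolution, or adapt the framework of \cite{BMM:Formality} which already handles coherent (not necessarily locally free) polystable sheaves via the trace decomposition and the Kuranishi functor. The \'etale descent for the Enriques and bielliptic cases is by now standard, provided the $L_\infty$ quasi-isomorphisms involved are chosen $\mathrm{Gal}(\widetilde{S}/S)$-equivariantly.
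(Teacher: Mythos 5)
Your reduction to the untwisted endomorphism algebra $A^{0,*}_S(\cE nd^\bullet(E^\bullet))$ is the right starting point and agrees with the paper, but the core of your argument has a genuine gap. For the K3/abelian case you propose to produce a (projectively) Hermite--Einstein structure on $E^\bullet$ via the twisted Kobayashi--Hitchin correspondence and then run a contraction-with-$\sigma$/K\"ahler-identities argument. The Kobayashi--Hitchin correspondence applies to slope-polystable \emph{locally free} sheaves; an $H$-polystable coherent sheaf $F$ that is not locally free carries no Hermite--Einstein metric, and the terms of a locally free resolution $E^\bullet\to F$ are not themselves polystable, so there is no analytic structure on $\cE nd^\bullet(E^\bullet)$ to which the $\partial\debar$-lemma machinery applies. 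This is exactly the point where formality for polystable coherent sheaves is hard even in the untwisted case, and it is why the paper avoids analysis entirely: it introduces $\cC_S=\cO_S\oplus K_S\oplus\cdots\oplus K_S^{n-1}$, shows that $\widetilde L=A^{0,*}_S\left(\cE nd^*(E^\bullet\otimes\cC_S)\right)$ is quasi-cyclic of degree $2$ via the trace pairing and Serre duality (Lemma~\ref{lemma:quasi-cyclic}), equips it with a rational, finitely supported action of the linearly reductive group $\Aut_{(S,\alpha)}(F\otimes\cC_S)$ through a carefully chosen equivariant resolution (Lemmas~\ref{lemma:polystable is reductive} and~\ref{lemma:equivariant and finitely supported resolutions}), invokes the purely algebraic formality criterion of \cite[Corollary~4.5]{BMM:Formality}, and finally transfers formality down the inclusion $A^{0,*}_S(\cE nd^*(E^\bullet))\subset\widetilde L$ using \cite{Manetti:FormalityTransfer}. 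Your proposal assumes precisely the step that requires the new idea.

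Your treatment of the Enriques and bielliptic cases by passing to the canonical cover $\pi\colon\widetilde S\to S$ is morally equivalent to the paper's tensoring with $\cC_S=\pi_*\cO_{\widetilde S}$, but as written the descent is incomplete: formality of a DG Lie algebra with a finite group action does not imply formality of the invariant subalgebra merely because taking invariants commutes with cohomology in characteristic $0$ --- formality is a property of the quasi-isomorphism class, and the formality quasi-isomorphisms would have to be chosen equivariantly, which you flag but do not establish. This step can be repaired by the same formality transfer theorem the paper uses, since $\oH^*(L^G)\to\oH^*(L)$ is injective; but it is that theorem, not the averaging argument, that does the work.
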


The notion of (semi)stability for twisted sheaves is the analogous for non-twisted ones. We refer to \cite[Section~3]{HuybrechtsStellari:ProofofCaldararu} for a survey of the several definitions and their equivalence; we also refer to \cite[Section~4.2]{Perego:Twisted} for the relevant definitions and results for slope (semi)stability of locally free twisted sheaves. 

\begin{remark}
    If $S$ is a projective K3 or abelian surface, then by Serre duality we get the stronger statement that for an $H$-stable $\alpha$-twisted sheaf $F$ the DG Lie algebra $\RHom_{(S,\alpha)}(F,F)$ is homotopy abelian.
\end{remark}

Theorem~\ref{thm:kodaira 0} will be proved in Section~\ref{section:proof of Kodaira 0}, let us now see a consequence.

\begin{corollary}\label{cor:formality on the derived category}
    Let $S$ be either a projective  K3 surface or an abelian surface, $\sigma\in\operatorname{Stab}^\dag(S)$ a full numerical geometric stability condition and $F\in\Db(S)$ a $\sigma$-polystable object. Then $\RHom_S(F,F)$ is formal, and it is homotopy abelian as soon as $F$ is $\sigma$-stable.
\end{corollary}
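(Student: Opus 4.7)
The plan is to reduce the statement to Theorem~\ref{thm:kodaira 0} by transporting $F$ through a twisted Fourier--Mukai equivalence to a Gieseker-polystable twisted sheaf on a surface of the same deformation type. First, I would invoke the classical description of the full numerical geometric component of the stability manifold in terms of Gieseker stability on a twisted surface: by Bridgeland's work in the K3 case, and its analogue for abelian surfaces, for every $\sigma\in\operatorname{Stab}^\dag(S)$ that is full and numerical geometric one can find a smooth projective surface $S'$ of the same deformation type as $S$, a Brauer class $\alpha\in\Br(S')$, an ample class $H'$ on $S'$, and a Fourier--Mukai equivalence
\[ \Phi\colon \Db(S)\xrightarrow{\sim}\Db(S',\alpha) \]
identifying, up to shift, the heart of $\sigma$ with $\operatorname{Coh}(S',\alpha)$ and $\sigma$-(poly)stability with Gieseker $H'$-(poly)stability of $\alpha$-twisted coherent sheaves on $S'$.

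Next, by the Lunts--Orlov uniqueness of DG enhancements of the bounded derived category of (twisted) coherent sheaves on a smooth projective variety, the Fourier--Mukai equivalence $\Phi$ lifts to a quasi-equivalence of DG enhancements. Hence $\Phi$ induces a quasi-isomorphism of DG Lie algebras
\[ \RHom_S(F,F)\;\simeq\;\RHom_{(S',\alpha)}(\Phi(F),\Phi(F)). \]
Since up to shift $\Phi(F)$ is an $H'$-polystable $\alpha$-twisted coherent sheaf on the K3 or abelian surface $S'$, Theorem~\ref{thm:kodaira 0} applies to the right-hand side, yielding the formality of $\RHom_S(F,F)$. When $F$ is $\sigma$-stable, $\Phi(F)$ is $H'$-stable, so that the Remark immediately after Theorem~\ref{thm:kodaira 0} (using Serre duality on $S'$ and the fact that $\operatorname{End}_{(S',\alpha)}(\Phi(F))$ reduces to scalars) upgrades formality to homotopy abelianness.

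The main obstacle, in my view, is not the deformation-theoretic part but the bookkeeping of the preliminary results: one needs the Bridgeland-type reduction of $\operatorname{Stab}^\dag(S)$ to Gieseker stability on a twisted surface in precisely the form stated above, which is classical for K3 surfaces but for abelian surfaces must be pieced together from the literature, and one needs the compatibility of the DG Lie algebra $\RHom$ under Fourier--Mukai equivalences through DG enhancements. Once these two inputs are in place, the argument is purely formal and no further twisted-sheaf theoretic work is needed.
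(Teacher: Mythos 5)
Your proposal follows essentially the same route as the paper: transport $F$ via the Bridgeland-type twisted Fourier--Mukai equivalence (from Minamide--Yanagida--Yoshioka for abelian surfaces and Bayer--Macr\`i for K3s) to a Gieseker-polystable $\alpha$-twisted sheaf, transfer formality of $\RHom$ across the equivalence using uniqueness of DG enhancements, and conclude by Theorem~\ref{thm:kodaira 0}. The only small discrepancy is a matter of citation: since one side of the equivalence is a \emph{twisted} derived category, the paper invokes the strong uniqueness of DG enhancements from Canonaco--Stellari (together with the formality-transfer statement of Budur--Zhang) rather than Lunts--Orlov directly, but this does not change the argument.
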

\begin{proof}
    In the proofs of \cite[Theorem~4.1.1]{MinimadeYanagidaYoshioka:SomeModuliSpaces} (for abelian surfaces) and \cite[Lemma~7.3]{BayerMacriProjectivity} (for K3 surfaces) it is proved that there exists another projective symplectic surface $T$, a Brauer class $\alpha\in\Br(T)$ and a derived equivalence 
    \[ \Phi\colon\operatorname{D}^b(S)\longrightarrow\operatorname{D}^b(T,\alpha) \]
    such that $\Phi(F)\in\operatorname{Coh}(T,\alpha)$. Moreover $\Phi(F)$ is Gieseker (poly)stable when $F$ is Bridgeland (poly)stable, inducing an isomorphism between the corresponding moduli spaces. 

    To conclude the proof is then enough to apply \cite[Corollary~2.15]{Budur_Zhang_2019}, which says that $\operatorname{RHom}_S(F,F)$ is formal if and only if $\operatorname{RHom}_{(S,\alpha)}(\Phi(F),\Phi(F))$ is, and deduce the claim from Theorem~\ref{thm:kodaira 0}. Notice that \cite[Corollary~2.15]{Budur_Zhang_2019} holds as soon as $\operatorname{D}^b(X,\alpha)$ and $\operatorname{D}^b(Y,\beta)$ have a strongly unique DG enhancement. The latter follows from \cite[Remark~5.15.(i)]{CANONACO201728}
\end{proof}
\begin{remark}
    Corollary~\ref{cor:formality on the derived category} for K3 surfaces already appeared in \cite[Theorem~3.2]{ASformality2}. Let us remark that in \cite{ASformality2} it is further proved the analogous statement for polystable objects in K3 categories of cubic fourfolds.
\end{remark}

\subsection{Proof of Theorem~\ref{thm:kodaira 0}}\label{section:proof of Kodaira 0}
We closely follow the proof of \cite[Theorem~1.1]{BMM:Formality}.

Since $S$ is a minimal surface of Kodaira dimension $0$, the canonical bundle $K_S$ is torsion, i.e.\ there exists $n>0$ such that $K_S^n=\cO_S$. We can then consider the locally free sheaf of commutative $\cO_S$-algebras
\[ \cC_S:=\cO_S\oplus K_S \oplus \cdots \oplus K_S^{n-1}. \]
If $E^\bullet\to F$ is a locally free resolution of $F$, then we also consider the complex of (non-twisted) $\cO_S$-modules
\[ \cE nd^\ast(E^\bullet\otimes\cC_S), \]
and define the DG Lie algebra
\[ \widetilde{L}:=A^{0,\ast}_S\left(\cE nd^\ast(E^\bullet\otimes\cC_S)\right). \]

\begin{claim*}
    Under the hypothesis of Theorem~\ref{thm:kodaira 0}, the DG Lie algebra $\widetilde{L}$ is formal.
\end{claim*}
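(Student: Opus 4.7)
The plan is to adapt the strategy of \cite{BMM:Formality} to the twisted setting, working directly on $S$ rather than passing to a canonical cover. The key algebraic reason the sheaf $\mathcal{C}_S$ has been introduced is the isomorphism $\mathcal{C}_S^\vee \cong \mathcal{C}_S \otimes K_S^{-1}$ of $\mathcal{O}_S$-modules (obtained by cyclically shifting the summands of $\mathcal{C}_S = \bigoplus_{i=0}^{n-1} K_S^i$), which implies that $\mathcal{E}nd^*(E^\bullet \otimes \mathcal{C}_S)\otimes K_S$ is isomorphic to the dual of $\mathcal{E}nd^*(E^\bullet \otimes \mathcal{C}_S)$. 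As a consequence, Serre duality supplies $\widetilde{L}$ with a non-degenerate invariant pairing of DG Lie modules of degree $-2$, mimicking the Calabi--Yau situation. This observation allows us to bypass passing to the canonical cover while still benefiting from trivial-canonical-bundle style Hodge theory.

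The second step is to equip a smooth model of $E^\bullet \otimes \mathcal{C}_S$ with a Hermite--Einstein structure using $H$-polystability of $F$. Writing $F \cong \bigoplus_i F_i^{\oplus m_i}$ with each $F_i$ slope $H$-stable $\alpha$-twisted, the twisted Donaldson--Uhlenbeck--Yau theorem (see \cite{Perego:Twisted}) produces a twisted Hermite--Einstein connection on each $F_i$; tensoring with a Hermite--Einstein connection on the polystable bundle $\mathcal{C}_S$ (built from the flat structure on each $K_S^i$ together with the requirement $K_S^n = \mathcal{O}_S$) yields a Hermite--Einstein structure on $E^\bullet \otimes \mathcal{C}_S$ after passing to a $C^\infty$ representative. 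Crucially, although the $F_i$ are $\alpha$-twisted, the sheaf $\mathcal{E}nd^*(E^\bullet \otimes \mathcal{C}_S)$ and its Dolbeault resolution (cf.\ Remark~\ref{rmk:A 0 stella di F}) are \emph{untwisted}, so the induced Chern connection and Laplacians on $\widetilde{L}$ are globally defined and behave exactly as in the classical case.

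With these structures in place, the final step is a Kähler-identity argument of Deligne--Griffiths--Morgan--Sullivan type, adapted to DG Lie algebras as in \cite{BMM:Formality}. The Chern connection decomposes as $\nabla = \partial + \bar{\partial}$, the Kähler identity $[\bar{\partial}^{*}, L] = \sqrt{-1}\,\partial$ holds on the untwisted bundle, and the Einstein scalar drops out when passing to traceless endomorphisms. Combined with the Serre self-duality noted above, these give a graded derivation $\partial$ commuting with the bracket and a Hodge homotopy, producing the zig-zag of quasi-isomorphisms of DG Lie algebras
\[\widetilde{L} \longleftarrow \ker(\partial) \longrightarrow H^{*}(\widetilde{L}),\]
which is the desired formality.

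The main obstacle I expect is the compatibility between the twisted Hermite--Einstein structures on the $F_i$ and the untwisted connection on $\mathcal{E}nd^*(E^\bullet \otimes \mathcal{C}_S)$: one must check that the local Chern connections on the $F_i|_{U_i}$ glue to a genuine global connection on the untwisted endomorphism bundle, so that the analytic Laplacians and the Kähler identities on $S$ apply verbatim. This should be manageable because the \v{C}ech cocycle $\alpha_{ijk}$ is scalar and hence commutes with every local endomorphism, but the details of the gluing and their interaction with the Serre pairing are the place where the twisted argument genuinely diverges from the untwisted one in \cite{BMM:Formality}.
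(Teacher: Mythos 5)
Your first step --- isolating the Serre self-duality of $\cE nd^\ast(E^\bullet\otimes\cC_S)$ coming from $\cC_S^\vee\cong\cC_S\otimes K_S^{-1}$ --- is exactly the role this pairing plays in the paper (Lemma~\ref{lemma:quasi-cyclic}, where it is packaged as a quasi-cyclic structure of degree $2$ via the trace map and Serre duality). The gap is in your third step. The Deligne--Griffiths--Morgan--Sullivan zig-zag $\widetilde{L}\leftarrow\ker(\partial)\to \oH^\ast(\widetilde{L})$ requires $(\widetilde{L},\partial,\debar)$ to be a genuine double complex satisfying the $\partial\debar$-lemma, and in particular $[\partial,\debar]=0$. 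For the Chern connection of a Hermite--Einstein metric one has $\debar\,\nabla^{1,0}+\nabla^{1,0}\debar=\operatorname{ad}(F_\nabla)$ acting on $A^{0,\ast}_S(\cE nd(E))$, and the Hermite--Einstein condition only forces $\Lambda F_\nabla$ to be a scalar, not $F_\nabla$ itself (nor its traceless part) to vanish. So the two differentials do not anticommute unless the bundle is projectively flat, and the K\"ahler-identity argument breaks down; this is precisely why the Kaledin--Lehn formality problem resisted the naive analytic approach. The situation is different from the hyper-K\"ahler case treated in Section~\ref{section:HK}, where the second differential is $\debar_J=J^{-1}\circ\tilde\nabla^{1,0}\circ J$ and the required commutation relations follow from the $\SU(2)$-invariance of the curvature --- a structure that has no analogue for a general polystable sheaf on a surface of Kodaira dimension $0$. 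A further unresolved point in your sketch is that the Hermite--Einstein metric lives on the polystable sheaf $F$ (or on its locally free summands), while $\widetilde{L}$ is built from a locally free resolution $E^\bullet$ of a possibly non-locally-free $F$; you would need compatible metrics on all terms of the resolution for the Laplacians and Hodge decomposition to make sense on $\widetilde{L}$.

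The paper's proof avoids analysis altogether: it shows that $F\otimes\cC_S$ is polystable, hence $\Aut_{(S,\alpha)}(F\otimes\cC_S)$ is linearly reductive (Lemma~\ref{lemma:polystable is reductive}); it constructs an equivariant resolution on which this action is rational and finitely supported (Lemma~\ref{lemma:equivariant and finitely supported resolutions}); and it combines this with the quasi-cyclic pairing of Lemma~\ref{lemma:quasi-cyclic} to invoke the algebraic formality criterion \cite[Corollary~4.5]{BMM:Formality}. If you want to salvage an analytic route, you would have to replace the DGMS argument by one that works without $[\partial,\debar]=0$, which is essentially the content of the machinery you are trying to bypass.
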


The claim allows us to conclude the proof of Theorem~\ref{thm:kodaira 0} as in the last part of the proof of \cite[Theorem~5.3]{BMM:Formality}. In fact, there is an inclusion of DG Lie algebras
\[ A^{0,\ast}_S\left(\cE nd^\ast(E^\bullet)\right)\subset A^{0,\ast}_S\left(\cE nd^\ast(E^\bullet\otimes\cC_S)\right) \]
and the formality of $A^{0,\ast}_S\left(\cE nd^\ast(E^\bullet)\right)$ follows from the formality of $A^{0,\ast}_S\left(\cE nd^\ast(E^\bullet\otimes\cC_S)\right)$ by the formality transfer of \cite{Manetti:FormalityTransfer} (see also \cite[Theorem~2.3]{BMM:Formality}). 

By Remark~\ref{rmk:A 0 stella di F}, the DG Lie algebra $A^{0,\ast}_S\left(\cE nd^\ast(E^\bullet)\right)$ is a representative of $\RHom_{(S,\alpha)}(F,F)$, thus concluding the proof.

\proof[Proof of the Claim.]
Let us collect some facts. 
The first one is well-known.

\begin{lemma}\label{lemma:polystable is reductive}
    Let $X$ be a smooth and projective variety and $H$ an ample line bundle. If $F$ is a $H$-polystable $\alpha$-twisted sheaf on $X$, then the group $\Aut_X(F)$ is linearly reductive. \qed
\end{lemma}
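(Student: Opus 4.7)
The plan is to exploit the fact that polystability provides an explicit direct-sum decomposition of $F$ into stable objects, and then to compute $\End_X(F)$ directly via Schur's lemma. First I would invoke the Jordan--H\"older theorem for $H$-semistable $\alpha$-twisted sheaves (see \cite[Section~3]{HuybrechtsStellari:ProofofCaldararu} and \cite[Section~4.2]{Perego:Twisted}): any $H$-polystable $\alpha$-twisted sheaf $F$ admits a decomposition
\[ F \;\cong\; \bigoplus_{i=1}^r F_i^{\oplus n_i} \]
where the $F_i$ are pairwise non-isomorphic $H$-stable $\alpha$-twisted sheaves sharing the same reduced Hilbert polynomial.

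Next I would verify Schur's lemma in the twisted setting. Since kernels, images and cokernels of morphisms of $\alpha$-twisted sheaves glue via the very same cocycle $\{\alpha_{ijk}\}$, they remain $\alpha$-twisted sub- and quotient sheaves; hence the usual slope-comparison argument goes through unchanged. It follows that $\Hom_X(F_i, F_j) = 0$ whenever $F_i \not\cong F_j$, and that each $\End_X(F_i)$ is a finite-dimensional division algebra over $\KK$. Passing, if necessary, to the algebraic closure (which affects neither polystability nor linear reductivity), we may assume $\End_X(F_i) = \KK$ for every $i$.

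Combining these observations yields
\[ \End_X(F) \;\cong\; \prod_{i=1}^r \operatorname{Mat}_{n_i}(\KK), \qquad \Aut_X(F) \;\cong\; \prod_{i=1}^r \GL(n_i, \KK), \]
which is a product of general linear groups, and therefore linearly reductive in characteristic zero. The only point requiring care---really a matter of bookkeeping rather than a genuine obstacle---is the verification that the Jordan--H\"older filtration and Schur's lemma transfer to $\operatorname{Coh}(X, \alpha)$; but this is entirely formal, since stability of a twisted sheaf is defined in terms of the reduced Hilbert polynomial of its underlying (local) sheaves, and the abelian-category structure of $\operatorname{Coh}(X, \alpha)$ makes all the relevant sub-objects twisted.
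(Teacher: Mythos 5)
Your argument is correct and is exactly the standard one the paper has in mind: it states the lemma as well-known with no proof, and in the proof of Lemma~\ref{lemma:equivariant and finitely supported resolutions} the authors themselves use the same decomposition $F\cong\bigoplus_k F_k\otimes V_k$ into stable summands, so the Schur-lemma computation of $\End_X(F)$ as a finite-dimensional semisimple algebra and of $\Aut_X(F)$ as its unit group is precisely what is intended. The only (harmless) imprecision is the passage to $\overline{\KK}$: base change can refine the stable decomposition, so it is cleaner to note that $\End_X(F)\cong\prod_i\operatorname{Mat}_{n_i}(D_i)$ with $D_i$ division algebras is already semisimple, and the unit group of a finite-dimensional semisimple algebra over a field of characteristic $0$ is reductive, hence linearly reductive, without any base change.
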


The second one asserts the existence of special $\alpha$-twisted locally free resolutions of a coherent $\alpha$-twisted sheaf.

\begin{lemma}\label{lemma:equivariant and finitely supported resolutions}
    Let $X$ be a smooth and projective variety and $H$ an ample line bundle. If $F$ is a $H$-polystable $\alpha$-twisted sheaf, then there exists a $\alpha$-twisted locally free resolution $E^\bullet\to F$ such that 
    \begin{enumerate}
        \item $E^\bullet\to F$ is $\Aut_X(F)$-equivariant;
        \item the action of $\Aut_X(F)$ on $E^\bullet$ is rational and finitely supported, i.e.\ for every open subset $U\subset X$ the space of sections $\Gamma(U,E^\bullet)$ is degree-wise a rational representation of every subgroup $G\leq\Aut_X(F)$ that is supported on finitely many irreducible representations.
        \item If $X$ is a minimal surface of Kodaira dimension $0$ and $\cC_X$ is the locally free sheaf of commutative $\cO_S$-algebras defined above, then every endomorphism of $F\otimes\cC_X$ lifts to an endomorphism of $E^\bullet\otimes\cC_X$.
    \end{enumerate}
    \begin{proof}
        Let us write $F=\bigoplus_{k=1}^n F_k\otimes V_k$, where $F_k$ is stable and $V_k$ is a vector space. Let us also consider the equivalence relation 
        \[ F_i\sim F_j\;\Leftrightarrow F_i\cong F_j\otimes K_X^m\;\;\mbox{for some } m. \]
        Then, if $F_1,\dots, F_r$ is a set of representatives we can write
        \[ F\cong\bigoplus_{k=1}^r F_k\otimes\mathcal{W}_k\otimes V_k, \]
        where $\mathcal{W}_k$ is a direct sum of line bundles of the form $K_X^m$ (for appropriate exponents). 
        
        Now, for every $k$, let us consider a $\alpha$-twisted locally free resolution $E_k^\bullet\to F_k$ and define 
        \[ E^\bullet:=\bigoplus_{k=1}^r E_k^\bullet\otimes\mathcal{W}_k\otimes V_k. \]
        Clearly $E^\bullet$ is a $\alpha$-twisted locally free resolution of $F$, and it is easy to see that it satisfies all the sought conditions.
    \end{proof}
\end{lemma}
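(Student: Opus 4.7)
The plan is to build the resolution $E^\bullet$ in a way that reflects the polystable decomposition of $F$, so that equivariance, rationality, and the lifting property become essentially formal consequences of the construction.

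First I would exploit polystability to decompose
\[
F \;\cong\; \bigoplus_{k=1}^n F_k \otimes V_k,
\]
with $F_k$ pairwise non-isomorphic $H$-stable $\alpha$-twisted sheaves and $V_k$ finite-dimensional multiplicity vector spaces. Since $\mathrm{Hom}_{(X,\alpha)}(F_i,F_j)=0$ for $i\ne j$ and equals $\KK$ for $i=j$ (by stability and the fact that $\mathcal{E}nd(F_k)$ is untwisted), one has $\Aut_X(F)\cong\prod_k\GL(V_k)$, acting on $F$ only through the multiplicity spaces.

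Next I would choose, for each $k$, an arbitrary $\alpha$-twisted locally free resolution $E_k^\bullet\to F_k$ (which exists by the previous lemma of C\u{a}ld\u{a}raru recalled in the excerpt) and define
\[
E^\bullet \;\defeq\; \bigoplus_{k=1}^n E_k^\bullet \otimes V_k \;\longrightarrow\; F.
\]
This resolution is $\Aut_X(F)$-equivariant essentially by construction: each factor $\GL(V_k)$ acts on $E_k^\bullet\otimes V_k$ through the standard representation on $V_k$ and trivially on $E_k^\bullet$. For any open $U\subset X$, the sections $\Gamma(U,E^\bullet)$ decompose degree-wise as $\bigoplus_k\Gamma(U,E_k^\bullet)\otimes V_k$, where each $V_k$ is a finite-dimensional and hence rational $\GL(V_k)$-representation. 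Restricted to any subgroup $G\le\Aut_X(F)$, these remain rational and are supported, degree by degree, on only finitely many $G$-irreducibles (coming from the finitely many $V_k$). This takes care of (1) and (2).

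For (3), I would refine the decomposition using the canonical bundle. Since $K_X$ is torsion, the relation $F_i\sim F_j\iff F_i\cong F_j\otimes K_X^m$ partitions $\{F_1,\dots,F_n\}$ into finitely many classes. Choosing a representative for each class, one rewrites
\[
F \;\cong\; \bigoplus_{k=1}^r F_k \otimes \mathcal{W}_k \otimes V_k,
\qquad
\mathcal{W}_k \;=\; \bigoplus_{m} K_X^{m},
\]
and accordingly builds
\[
E^\bullet \;\defeq\; \bigoplus_{k=1}^r E_k^\bullet \otimes \mathcal{W}_k \otimes V_k.
\]
Tensoring by $\cC_X=\bigoplus_{j=0}^{n-1}K_X^j$ preserves this direct sum structure: $E^\bullet\otimes\cC_X=\bigoplus_k E_k^\bullet\otimes(\mathcal{W}_k\otimes\cC_X)\otimes V_k$, and similarly for $F\otimes\cC_X$. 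By stability any endomorphism of $F\otimes\cC_X$ decomposes into blocks, each of which is a morphism between twists $F_k\otimes K_X^{a}$ and $F_k\otimes K_X^{b}$ (i.e.\ inside a single equivalence class), and tensoring the resolution $E_k^\bullet\to F_k$ by $K_X^j$ yields a locally free resolution of $F_k\otimes K_X^j$. The standard comparison theorem for locally free resolutions then lifts each block to a chain map between the corresponding summands of $E^\bullet\otimes\cC_X$, and assembling these block-lifts gives the required endomorphism of $E^\bullet\otimes\cC_X$.

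The main obstacle I expect is checking (3) cleanly: one must verify that cross-class blocks are genuinely zero (so that no non-liftable components arise), and that the block-wise lifts can be assembled into a global chain endomorphism compatible with the twist cocycle. Both points reduce to stability (Schur-type vanishing for non-equivalent stable summands) together with the fact that the resolutions of $F_k\otimes K_X^m$ are, by construction, the twists $E_k^\bullet\otimes K_X^m$, so no obstruction to lifting arises.
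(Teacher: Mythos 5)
Your construction is exactly the one in the paper: decompose $F$ into stable summands, group them by the equivalence $F_i\sim F_j\Leftrightarrow F_i\cong F_j\otimes K_X^m$, resolve one representative per class, and set $E^\bullet=\bigoplus_k E_k^\bullet\otimes\mathcal{W}_k\otimes V_k$. The paper dismisses the verification of (1)--(3) as ``easy to see'', so your more explicit checks (in particular the Schur-type vanishing of the cross-class blocks needed for (3)) only fill in what the paper leaves implicit.
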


The last fact is about the properties of the DG Lie algebra $\widetilde{L}$.

\begin{lemma}\label{lemma:quasi-cyclic}
    The DG Lie algebra $\widetilde{L}$ above is quasi-cyclic of degree $2$, i.e.\ there exists a symmetric bilinear form
    \[ (-,-)\colon \widetilde{L}\times \widetilde{L} \longrightarrow \C[-2] \]
    that is non-degenerate in cohomology and such that 
    \begin{equation}\label{eqn:quasi-cyclic} 
    (dx,y)=(-1)^{\bar{x}+1}\,(x,dy)\quad\mbox{ and }\quad ([x,y],z)=(x,[y,z]) 
    \end{equation}
    for every homogeneous elements $x,y,z\in\widetilde L$ (here we denote by $\bar{x}$ the degree of the homogeneous element $x\in\widetilde L$).
    \begin{proof}
        First of all, let us consider the trace map
        \[ \widetilde{\tr}\colon \cE nd_S^\ast(E^\bullet\otimes\cC_S)\to \cC_S. \]
        This map is the extension of the more natural trace map $\tr\colon \cE nd_S^\ast(E^\bullet)\to\cO_S$, which is defined locally as in the untwisted case (see e.g.\ \cite[Section~2.2.1]{Perego:Twisted}).

        Then we can define
        \[ (-,-)\colon \widetilde{L}\times\widetilde{L}\to\C[-2],\qquad (x,y)\mapsto \int_S \pi\left(\widetilde{\tr}(xy)\right), \]
        where $\pi\colon A^{0,\ast}_S(\cC_S)\to A^{0,2}(K_S)$ is the projection on the given direct summand.

        It is straightforward to see that $(-,-)$ is symmetric. Moreover, it follows from the usual properties of the trace map that the relations (\ref{eqn:quasi-cyclic}) holds. Finally, $(-,-)$ is non-degenerate in cohomology by Serre duality.
    \end{proof}
\end{lemma}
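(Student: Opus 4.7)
The plan is to construct the pairing explicitly via the trace map and integration, then verify symmetry, the Leibniz/invariance identities, and cohomological non-degeneracy separately.

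First, I would construct the algebraic ingredient. Since $E^\bullet$ is an $\alpha$-twisted locally free complex, the endomorphism complex $\cE nd^\ast(E^\bullet)$ is an ordinary (untwisted) sheaf of DG algebras on $S$, so the fibrewise supertrace
\[ \tr \colon \cE nd^\ast(E^\bullet) \to \cO_S \]
is defined verbatim as in the untwisted case (on $\cH om^k(E^\bullet,E^\bullet)$, take the alternating sum of the componentwise traces). Tensoring with the commutative $\cO_S$-algebra $\cC_S$ yields a $\cC_S$-linear map $\widetilde{\tr} \colon \cE nd^\ast(E^\bullet \otimes \cC_S) \to \cC_S$. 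Applying the Dolbeault functor $A^{0,*}_S(-)$ and postcomposing with the projection $\pi \colon A^{0,*}_S(\cC_S) \to A^{0,2}_S(K_S)$ onto the top antiholomorphic degree and the $K_S$-summand, I would define
\[ (x,y) \;:=\; \int_S \pi\bigl(\widetilde{\tr}(xy)\bigr), \]
which by construction vanishes unless the total degrees satisfy $\bar x + \bar y = 2$, and hence lands in $\C[-2]$.

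Next I would verify the three required properties. For \emph{symmetry}, the graded-commutativity of wedge products in $A^{0,*}_S$ combined with the supertrace identity $\tr(\phi\psi) = (-1)^{|\phi||\psi|}\tr(\psi\phi)$ gives $(x,y)=(y,x)$ after a short sign bookkeeping in the relevant total degree $\bar x + \bar y = 2$. For the \emph{Leibniz} rule, I would use that the differential on $\widetilde L$ is $\bar\partial + [d_{E^\bullet},-]$, both of which are derivations for the associative product, combined with Stokes' theorem on the compact surface $S$ (the integral of an exact top form vanishes) and the fact that $\widetilde{\tr}$ annihilates supercommutators. For \emph{invariance} $([x,y],z) = (x,[y,z])$, this is a direct consequence of associativity plus the cyclic property $\widetilde{\tr}(\phi\psi) = (-1)^{|\phi||\psi|}\widetilde{\tr}(\psi\phi)$, integrated over $S$.

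The remaining point is \emph{non-degeneracy in cohomology}. On the level of cohomology, the pairing factors as trace, cup product and integration, so it coincides with the classical pairing
\[ \oH^p\bigl(S, \cE nd^\ast(E^\bullet \otimes \cC_S)\bigr) \times \oH^{2-p}\bigl(S, \cE nd^\ast(E^\bullet \otimes \cC_S)\bigr) \longrightarrow \C \]
induced by Serre duality. Here the essential structural observation is that $\cC_S \otimes K_S \cong \cC_S$ as $\cO_S$-modules (shifting $K_S^i \mapsto K_S^{i+1}$ and using $K_S^n \cong \cO_S$), so that $\cE nd^\ast(E^\bullet \otimes \cC_S)$ is self-dual after twisting by $K_S$, and Serre duality delivers non-degeneracy at once.

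The main obstacle, as usual in this kind of construction, will be the bookkeeping of signs in the symmetry and Leibniz identities, since elements of $\widetilde L$ simultaneously carry a Dolbeault degree and an endomorphism-complex degree which both contribute to the total grading. Once the sign conventions are fixed coherently (using the standard Koszul rule), all identities reduce to local, pointwise statements about the supertrace on $\cE nd^\ast(E^\bullet) \otimes \cC_S$; the remaining global input is only Stokes' theorem and Serre duality.
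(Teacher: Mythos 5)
Your construction is exactly the one the paper uses: the supertrace extended $\cC_S$-linearly to $\cE nd^\ast(E^\bullet\otimes\cC_S)$, composed with the projection onto the $A^{0,2}(K_S)$-summand and integrated over $S$, with symmetry and invariance from cyclicity of the trace, the Leibniz identity from Stokes plus vanishing of the trace on supercommutators, and non-degeneracy from Serre duality (your observation that $\cC_S\otimes K_S\cong\cC_S$ is precisely the structural reason the $\cC_S$-twist makes Serre duality close up). The argument is correct and matches the paper's proof, only with the sign bookkeeping spelled out more explicitly.
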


Let us conclude the proof of the claim. First of all, let us notice that since $F$ is polystable, also $F\otimes\cC_S$ is polystable. In particular, by Lemma~\ref{lemma:polystable is reductive} the group $\Aut_{(S,\alpha)}(F\otimes\cC_S)$ is linearly reductive. 

Let us now fix once and for all a resolution $E^\bullet\to F$ as in Lemma~\ref{lemma:equivariant and finitely supported resolutions}. Then $\Aut_{(S,\alpha)}(F\otimes\cC_S)$ acts on $\widetilde L$ and, arguing as in the proof of \cite[Theorem~5.1]{BMM:Formality}, this action is rational and finitely supported. 

Since $\widetilde L$ is quasi-cyclic of degree $2$ (Lemma~\ref{lemma:quasi-cyclic}), its formality follows from \cite[Corollary~4.5]{BMM:Formality}.
\endproof

\section{Formality for twisted sheaves on hyperk\"ahler manifolds}\label{section:HK}

In \cite{MO22} we proved that the DG Lie algebra controlling the deformations of a hyper-holomorphic vector bundle on a hyper-K\"ahler manifold is formal. In this section we extend that result to twisted vector bundles. 

\subsection{A quick guide to hyper-K\"ahler manifolds and hyper-holomorphic bundles}
Let us start by recalling the main definitions and notions. If $X$ is a complex manifold, we also write $X=(M,I)$ when we want to specify the underlying differential manifold $M$ and the complex structure $I$. If $g$ is a riemannian metric on $M$, then $\omega_I=g(I(-),-)$ is the associated $2$-form of type $(1,1)$; recall that $(M,g,I)$ is K\"ahler if $\omega_I$ is closed. An \emph{hyper-K\"ahler} manifold is then a $5$-uple $(M,g,I,J,K)$ where $I$, $J$ and $K$ are complex structures satisfying the quaternionic relations and such that the induced $2$-forms $\omega_I$, $\omega_J$ and $\omega_K$ are K\"ahler. If we fix the complex structure $I$, then the $2$-form $\sigma_I=\omega_J+i\omega_K$ is of type $(2,0)$ and non-degenerate, i.e.\ it is an holomorphic symplectic form on $X=(M,I)$. Vice versa, Yau's solution to Calabi's conjecture says that if $X=(M,I)$ is a compact K\"ahler manifold with an holomorphic symplectic form, then for any K\"ahler class $\omega$ there exists a riemannian metric $g$ and a hyper-K\"ahler structure $(M,g,I,J,K)$ such that $\omega=\omega_I$. 

Once a hyper-K\"ahler structure is fixed (for example, by fixing a K\"ahler class), there is a natural action of the group $\SU(2)$ on the complex tangent bundle $T_M$. This action is the one generated by the actions of $I$, $J$ and $K$. Extending multiplicatively, we can further define an action of $\SU(2)$ on the de Rham algebra $A^*_X$.

If $E$ is a complex vector bundle on a hyper-K\"ahler manifold $X$, then we can define an action of $\SU(2)$ on the algebra $A^*_X(E)$ by letting $\SU(2)$ act trivially on the coefficients, i.e.\ on the sections of $E$. 
Suppose that $E$ is a holomorphic vector bundle. We say that $E$ is $\omega$-\emph{hyper-holomorphic} if there exists an hermitian metric and a Chern connection $\nabla$ on $E$ such that the curvature $\nabla^2\in A^{1,1}(End(E))$ is $\SU(2)$-invariant, where the action of $\SU(2)$ is associated to the K\"ahler class $\omega$. If this is the case, the connection $\nabla$ is Yang--Mills.

Among the main properties of being hyper-holomorphic, we want to recall here that any such vector bundle deforms holomorphically along the twistor family. More precisely, we can endow the product $M\times S^2$ with a complex structure $\cX=(M\times S^2,\tilde{I})$ such that the projection $p\colon\cX\to\mathbb{P}^1$ is holomorphic and $p^{-1}(a,b,c)=(M,aI+bJ+cK)$. The other projection $q\colon\cX\to X$ is never holomorphic and hyper-holomorphic vector bundles $E$ on $X$ are characterised by the property that $q^*E$ is an holomorphic vector bundle on $\cX$ (\cite[Lemma~5.1]{KV}). 

Finally, let us recall that a holomorphic vector bundle $E$ is $\omega$-\emph{projectively
hyper-holomorphic} if there exists a metric connection on $E$ such that the induced connection on $End(E)$ is $\omega$-hyper-holomorphic. 

\subsection{Connections on twisted vector bundles}\label{section:twisted connections}
We refer to Section~\ref{section:twisted complex vb} for the definition of twisted complex vector bundle, and to \cite{Perego:Twisted} for a general account on connections on them.

Let $X=(M,I)$ be a complex manifold and $\alpha\in\Br'(X)$ a twist. We further fix here a $B$-field associated to the twist. In particular, if $\mathfrak{U}=\{U_i\}$ is an open cover where $\alpha$ is represented, this means that we fix a collection $\{B_i\}$ of $2$-forms on each $U_i$ and a collection $\{\eta_{ij}\}$ of $1$-forms on each $U_{ij}$ such that $B_i-B_j=d\eta_{ij}$. 
Consider now the morphism
\[ \gamma\colon\oH^2(X,\cO_X^*)\longrightarrow\oH^3(X,\Z) \]
induced by the exponential sequence.
By definition the $3$-forms $\{dB_i\}$ glue to form a global and closed $3$-form $dB$, and the twist and the $B$-field are associated if $\gamma(\alpha)=[dB]$.
Following \cite{Perego:Twisted}, we further assume that the $B_i$ are purely imaginary forms of type $(1,1)$ and the $\eta_{ij}$ are forms of type $(1,0)$.
\medskip

Let $E$ be an $\alpha$-twisted complex vector bundle on $X$. A \emph{connection} on $E$ is a collection of connections $\{\nabla_i\}$ on each $E_i$ satisfying a twisted compatibility condition. More precisely, if we represent each $\nabla_i$ with a matrix $\Gamma_i$ of $1$-forms relative to a local frame, then 
\[ \Gamma_i=\phi_{ij}^{-1}\Gamma_j\phi_{ij}+\phi_{ij}^{-1}d\phi_{ij}+\eta_{ij}\id. \]

As usual we can decompose $\nabla$ in its $(1,0)$ and $(0,1)$ parts. The assumption that $\omega_{ij}$ are of type $(1,0)$ then implies that $\nabla^{0,1}$ is a semi-connection in the classical sense. In particular, if the twisted vector bundle is holomorphic, i.e.\ there is a holomorphic structure $\debar_E=\{\debar_{E_i}\}$, then $\nabla$ is compatible with the holomorphic structure if $\nabla^{0,1}=\debar_E$.

An hermitian metric on $E$ is a collection of hermitian metrics on each $E_i$. A connection is then \emph{metric} if it is compatible with the metric (see \cite[Section~2.5]{Perego:Twisted}). If the $\alpha$-twisted vector bundle is holomorphic and we fix a hermitian metric on $E$, then there is a unique connection, called \emph{twisted Chern connection}, compatible with both the holomorphic structure and the metric (\cite[Lemma~2.25]{Perego:Twisted}).

Finally, if $\nabla$ is a connection on a $\alpha$-twisted vector bundle $E$, then we can induce a connection $\tilde{\nabla}$ on the (untwisted) vector bundle $End(E)$ as follows. Let $\mU=\{U_i\}$ be an open cover representing $\alpha$, $E$ and $\nabla$. Then $End(E)$ is obtained by gluing together the vector bundles $End(E_i)$ in the usual way. The connections $\nabla_i$ induce now connections $\tilde{\nabla}_i$ on each $End(E_i)$, which can again be glued together to form a connection $\tilde{\nabla}$ on $End(E)$ (see also \cite[Section~2.6]{Perego:Twisted}).

\subsection{Projectively hyper-holomorphic twisted vector bundles}

The following is a very natural definition, see for example \cite{Markman:modular,MSYZ:D-Equiv,Bottini:OG10twisted}.

\begin{definition}\label{defn:twisted proj hyper-holomorphic}
    Let $X$ be a hyper-K\"ahler manifold, $\omega$ a K\"ahler class and $E$ a $\alpha$-twisted holomorphic vector bundle. Then $E$ is called $\omega$-\emph{projectively hyper-holomorphic} if there exists a hermitian metric, inducing a twisted Chern connection $\nabla$ on $E$, such that the induced connection $\tilde{\nabla}$ on $End(E)$ is $\omega$-hyper-holomorphic.
\end{definition}

\begin{remark}[Speculation on the notion of twisted hyper-holomorphic connections]\label{rmk:speculation}
    If $\nabla$ is a connection on an $\alpha$-twisted vector bundle and $\{B_i,\eta_{ij}\}$ is a $B$-field associated to $\alpha$, then the curvature $R_\nabla$ of $\nabla$ is obtained (cf.\ \cite[Definition~2.16]{Perego:Twisted}) by gluing together the local forms
    \[ R_i=\nabla_i^2-B_i\otimes\id\in A^2_X(End(E_i)). \]
    If we assume that the $2$-forms $B_i$ are $\SU(2)$-invariant, then it would be natural to define an $\omega$-\emph{hyper-holomorphic} connection as a connection $\nabla$ such that each $\nabla_i$ is $\omega$-hyper-holomorphic. This definition is then equivalent to ask that the curvature $R_\nabla$ is $\SU(2)$-invariant. Notice that the B-field being $\SU(2)$-invariant implies that $B_i\in A^{1,1}_{U_i}$ so that $R_\nabla\in A^{1,1}_X(End(E))$ in this case.

    It would be very interesting to have a good theory of twisted hyper-holomorphic connections as the one developed in \cite{Verbitsky:hyperholomorphic}. For example, using Perego's theory of twisted Yang--Mills connections (\cite{Perego:Twisted}), it is not difficult to see that such a connection is (twisted) Yang--Mills and, if metric, its $(0,1)$ part induces a holomorphic structure on $E$. Moreover, we expect that the following statement holds:
    \begin{center}
        if $E$ is a slope stable holomorphic vector bundle with twisted first and second Chern classes that are $\SU(2)$-invariants, then the unique Yang--Mills connection is hyper-holomorphic.
    \end{center}

    This statement passes through the twisted Kobayashi--Hitchin correspondence, which is now proved in \cite[Section~5]{Perego:Twisted}. The twisted Chern classes are defined by twisting by the classical Chern classes by the B-field (see \cite[Section~3.1]{Perego:Twisted} and \cite[Section~1]{HuybrechtsStellari}).

    Our definition of twisted projectively hyper-holomorphic connections by-passes these questions inasmuch as it is a condition on the induced connection on the endomorphism bundle, which is untwisted. We do not need a complete theory of twisted hyper-holomorphic connections, which will be studied elsewhere.
\end{remark} 

\begin{remark}
    As in the untwisted case, in Definition~\ref{defn:twisted proj hyper-holomorphic} we assume that the connection is metric (for example, if $End(E)$ is slope polystable, we can take the unique Yang--Mills metric). This assumption will be essential in our proof of Theorem~\ref{thm:formality of twisted hyper-holomorphic} below, as it is essential in \cite[Theorem~4.8]{MO22}.
\end{remark}

\begin{remark}
    As we have already remarked, by \cite[Lemma~5.1]{KV} the holomorphic vector bundle $End(E)$ being hyper-holomorphic means that it deforms along the twistor family. In particular this means that the projectified bundle $\mathbb{P}(E)$ deforms (as a projective bundle) along the twistor family. Twisted projectively hyper-holomorphic vector bundles are then the natural analog of (untwisted) projectively hyper-holomorphic vector bundles.
\end{remark}

\begin{theorem}\label{thm:formality of twisted hyper-holomorphic}
    Let $E$ be a $\alpha$-twisted projectively hyper-holomorphic vector bundle on a compact hyper-K\"ahler manifold $X$. Then the DG Lie algebra $A^{0,*}_X(End(E))$ is formal.
\end{theorem}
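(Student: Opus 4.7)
The plan is to reduce the formality statement to our previous untwisted result \cite[Theorem~4.8]{MO22}. The crucial observation is that although $E$ is $\alpha$-twisted, the endomorphism bundle $\cE nd(E)$ is an \emph{untwisted} holomorphic vector bundle (as noted in Section~\ref{section:twisted}), and by hypothesis it carries an $\omega$-hyper-holomorphic structure compatible with the Dolbeault differential coming from the twisted Chern connection on $E$ (see Section~\ref{section:twisted connections}). Thus, despite living naturally over the twisted category, the DG Lie algebra $A^{0,*}_X(\cE nd(E))$ is a genuine untwisted Dolbeault-type complex: its differential is $[\bar\partial_{\cE nd(E)},-]$, and its bracket is the commutator induced by the natural associative algebra structure on $\cE nd(E)$.

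First, I would transfer all the ingredients used in the untwisted proof to our setting. The hyper-holomorphic structure on $\cE nd(E)$ combined with the standard $SU(2)$-action on the de Rham algebra $A^{*}_X$ produces an $SU(2)$-action on $A^{*}_X(\cE nd(E))$. Hodge theory on the compact hyper-K\"ahler manifold $X$ lets us identify the cohomology of $A^{0,*}_X(\cE nd(E))$ with harmonic representatives, which are $SU(2)$-invariant precisely because $\cE nd(E)$ is hyper-holomorphic. Finally, pairing against the holomorphic symplectic form of $X$ and using the trace on $\cE nd(E)$ yields a (quasi-)cyclic structure in the spirit of Lemma~\ref{lemma:quasi-cyclic}, which is the algebraic input needed to promote $SU(2)$-equivariance to formality.

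The main step is then to verify that the formality argument of \cite{MO22} only uses the fact that the coefficient bundle is a hyper-holomorphic bundle of associative algebras, and never uses that it has the specific form $\cE nd(V)$ for some underlying hyper-holomorphic vector bundle $V$. Granting this, the same machinery applies verbatim to $\cE nd(E)$ and produces formality of $A^{0,*}_X(\cE nd(E))$. By Remark~\ref{remark:A 0 stella di E}, this DG Lie algebra is a representative of $\RHom_{(X,\alpha)}(E,E)$, so the theorem follows. The hardest point is precisely this verification: in the projectively hyper-holomorphic twisted setting there need not exist any untwisted $V$ with $\cE nd(V)\cong\cE nd(E)$ as hyper-holomorphic bundles (this is the whole reason we had to speculate on the correct notion of a twisted hyper-holomorphic bundle in Remark~\ref{rmk:speculation}), so one must confirm that the formality machinery is genuinely about the hyper-holomorphic algebra bundle $\cE nd(E)$ in isolation and not about a hidden ``square root'' $V$.
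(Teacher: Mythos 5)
Your overall strategy is the one the paper uses: since $\cE nd(E)$ is an honest untwisted holomorphic bundle whose induced connection $\tilde{\nabla}$ is $\omega$-hyper-holomorphic by the very definition of projectively hyper-holomorphic, one applies the untwisted formality machinery of \cite{MO22} directly to the algebra bundle $\cE nd(E)$, with no need for an underlying ``square root'' $V$ --- and the verification you defer is exactly what the paper's proof carries out. Concretely, the paper checks that $\bigl(A^{0,*}_X(\cE nd(E)),\debar=\tilde{\nabla}^{0,1},\debar_J\bigr)$ with $\debar_J=J^{-1}\circ\tilde{\nabla}^{1,0}\circ J$ is a DGMS algebra in the sense of \cite[Definition~1.5]{MO22} (derivation property via the local gluing of the $\tilde{\nabla}_i$, the identities $\debar_J^2=0$ and $[\debar,\debar_J]=0$, and the strong $\debar\debar_J$-lemma from \cite[Proposition~3.7, Lemma~3.9]{MO22}, using that $\tilde{\nabla}$ is metric), and then invokes \cite[Theorem~1.8]{MO22}. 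One inaccuracy in your sketch: the mechanism here is \emph{not} a quasi-cyclic pairing promoting $\SU(2)$-equivariance to formality --- that is the argument used for polystable sheaves on surfaces (Lemma~\ref{lemma:quasi-cyclic} and \cite{BMM:Formality}); in the hyper-K\"ahler case formality comes from the $\debar\debar_J$-lemma alone, so no trace pairing or reductivity input is needed.
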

\begin{proof}
    Let $\nabla$ be a connection on $E$ and $\tilde{\nabla}$ the induced connection on $End(E)$. If $(M,g,I,J,K)$ is the hyper-K\"ahler structure on $X$, we define the operator
    \[ \debar_J:=J^{-1}\circ\tilde{\nabla}^{1,0}\circ J\colon A^{0,*}_X(End(E))\longrightarrow A^{0,*}_X(End(E)). \]
    We claim that the triple 
    \[ (A^{0,*}_X(End(E)),\debar=\tilde{\nabla}^{0,1},\debar_J) \] 
    is a DGMS algebra as defined in \cite[Definition~1.5]{MO22}. Then the theorem follows at once from \cite[Theorem~1.8]{MO22}.
    We need to prove the following statements:
    \begin{enumerate}
        \item the pair $(A^{0,*}_X(End(E)),\debar_J)$ is an associative DG algebra;
        \item $\debar_J^2=0$ and $[\debar,\debar_J]=0$;
        \item the strong $\debar\debar_J$-lemma holds.
    \end{enumerate}

    Let us show item (1). First of all, since $End(E)$ (resp.\ $\tilde{\nabla}$) is obtained by gluing together $End(E_i)$ (resp.\ $\tilde{\nabla}_i$), it follows that $\tilde{\nabla}$ acts as a derivation on $A^{0,*}_X(End(E))$. In fact, each $\tilde{\nabla}_i$ acts as a derivation on $A^{0,*}_X(End(E_i))$ by a direct computation (cf.\ \cite[Remark~3.12]{MO22}). Therefore the difference $\tilde{\nabla}^{1,0}=\tilde{\nabla}-\tilde{\nabla}^{0,1}$ acts as a derivation. Now, by definition $J$ acts multiplicatively on $A^{0,*}_X(End(E))$ so that $\debar_J$ must act as a derivation on $A^{0,*}_X(End(E))$, turning it into an associative DG algebra as claimed.

    Finally, items (2) and (3) follow from \cite[Proposition~3.7, Lemma~3.9]{MO22} since $\tilde{\nabla}$ is hyper-holomorphic by definition (and it is metric by \cite[Lemma~2.33]{Perego:Twisted}).
\end{proof}

\begin{example}
    Any slope stable $\alpha$-twisted locally free sheaf $E$ on a K3 surface is projectively hyper-holomorphic. The proof of this fact follows as in the proof of \cite[Proposition~2.3]{HuySchr:Brauer} (up to using the twisted Kobayashi--Hitchin correspondence in \cite{Perego:Twisted}).
\end{example}

\begin{example}
    Several examples of projectively hyper-holomorphic $\alpha$-twisted locally free sheaves on hyper-K\"ahler manifolds of type $\operatorname{K3}^{[2]}$ have been studied in \cite{Bottini:OG10twisted}.
\end{example}

\bibliographystyle{amsplain-nodash}
\bibliography{bib}

\end{document}